\documentclass[11pt]{article}

\usepackage[a4paper,margin=1in]{geometry}
\usepackage{amsmath,amssymb,amsfonts,amsthm,mathtools}
\usepackage{enumitem}
\usepackage{xcolor}
\usepackage{hyperref}
\usepackage{microtype}
\usepackage{bm}
\usepackage{tikz}
\usetikzlibrary{arrows.meta,calc,decorations.pathreplacing,patterns}

\hypersetup{colorlinks=true,linkcolor=black,citecolor=black,urlcolor=black}
\numberwithin{equation}{section}

\newtheorem{theorem}{Theorem}[section]
\newtheorem{lemma}[theorem]{Lemma}
\newtheorem{proposition}[theorem]{Proposition}
\newtheorem{corollary}[theorem]{Corollary}

\newcommand{\ii}{\mathbf i}

\title{Convergence of the PML-BIE Method for Acoustic Scattering in an Impedance Half-Space}
\author{Wangtao Lu\thanks{School of Mathematical Sciences, Zhejiang University, Hangzhou, China (\texttt{wangtaolu@zju.edu.cn}).}}
\date{}

\begin{document}
\maketitle

\begin{abstract}
The perfectly matched layer-based boundary integral equation (PML-BIE) method (Lu et al., \emph{SIAM J. Appl. Math.} 78 (2018)) has become an effective tool for wave scattering problems in unbounded domains. Despite its successful applications, a rigorous convergence theory has remained incomplete in many physically relevant settings.  In this paper, we present a general framework for establishing convergence of PML-BIE methods, using acoustic scattering in an impedance half-space as the first illustrative example. The framework separates the analysis into three components: convergence of the PML truncated partial differential equation, equivalence between the PML problem and an exact PML-Green BIE, and convergence of a computable PML-BIE obtained by replacing the exact kernel with a stretched free-space kernel.  For the impedance half-space problem, we prove convergence of both the exact and computable PML-BIE formulations.  The final error bound decays exponentially as the PML absorption power increases.  The resulting theory provides a rigorous foundation for the PML-BIE method and gives a reference model for more general scattering problems in layered and inhomogeneous media.
\end{abstract}

\noindent\textbf{AMS subject classifications.} 35J05, 35P25, 65N38, 65N80, 78A45.

\noindent\textbf{Keywords.} perfectly matched layer, boundary integral equation, impedance boundary condition, convergence theory.

\section{Introduction}

Boundary integral equation (BIE) methods are attractive for time-harmonic
scattering because they reduce the dimension of the unknown and encode radiation
through Green functions.  Their effectiveness, however, depends strongly on the
choice of the Green kernel.  For a flat Dirichlet or Neumann half-plane, the
background Green function is given by one image.  For impedance, layered, or
transmission backgrounds, the corresponding Green functions are substantially
more complicated.  The perfectly matched layer (PML), introduced by B\'erenger for
electromagnetic waves \cite{Berenger1994}, provides a powerful and flexible way
to accurately truncate unbounded wave propagation problems with exponentially
small errors.  The PML idea has since become a standard tool in computational
wave scattering, including both finite element and boundary integral
formulations.

The PML-BIE method defines boundary integral operators using the PML-transformed
free-space Green function.  The rationale is the following.  The true background
Green function yields a localized boundary integral equation, but the kernel may
be expensive or difficult to evaluate.  The free-space Green function is easier
to compute, but the associated boundary integral equation is generally posed on
unbounded interfaces and is therefore not naturally localized.
The PML-BIE replaces the free-space Green function
$\Phi_k(x,y)=\frac{\ii}{4}H_0^{(1)}(k|x-y|)$ by the stretched Green function
\begin{equation*}
  \Phi_\sigma(x,y)=\Phi_k(F_\sigma(x),F_\sigma(y)),
\end{equation*}
where $F_\sigma$ is the complex PML stretching map.  This kernel is simple to
evaluate and is exponentially damped in the PML region.  Since the
layered-medium PML-BIE method of \cite{LuLuQian2018}, related PML-BIE solvers
have been developed for layered orthotropic media \cite{GaoLu2022}, locally
defected periodic surfaces \cite{YuHuLuRathsfeld2022}, step-like interfaces
\cite{Lu2021,LuLi2025}, and three-dimensional electromagnetic/acoustic
layered-medium scattering \cite{LuXuYinZhang2023,BaoLuYinZhang2024,WangLu2026}.
Besides, related complex-scaling integral-equation
methods have also been used for time-harmonic water waves
\cite{BonnetBenDhiaFariaPerez2024}, open waveguides and unbounded interfaces
\cite{EpsteinGoodwillHoskinsQuinnRachh2025,GoodwillEpstein2026}, eigenvalues
problems for the Neumann--Poincar\'e operator on corner domains
\cite{FariaMonteghetti2025}, and fast multipole acceleration with complex
coordinates \cite{GoodwillGreengardHoskinsRachhWang2025}. All of the above
works show that complex-stretched kernels can replace more complicated physical
Green functions in open-domain problems.  A remaining question is to identify a
convergence mechanism explaining why a finite PML-BIE inherits the exponential
convergence of the underlying PML truncation.

The PDE theory of PML truncations is much better developed.  Early adaptive and
convergence analyses for acoustic and periodic scattering include
\cite{ChenWu2003,ChenLiu2005}; anisotropic and electromagnetic PML methods were
analyzed in, for example, \cite{ChenCuiZhang2013}.  For layered backgrounds,
stability and exponential convergence of uniaxial PMLs were proved for acoustic
scattering in \cite{ChenZheng2010} and for electromagnetic scattering in
\cite{ChenZheng2017}.  The well-posedness and exponential convergence of a UPML
method for acoustic scattering by a locally perturbed impedance line were proved
in \cite{JiangLi2020}.  Related radiation-condition and PML convergence theories
for rough, periodic, and step-like surfaces were developed in
\cite{ChandlerWildeMonk2009,Lu2021,Zhang2022,LuZhengZhu2025}.  The
modified-Green-function and Hankel-function estimates developed in
\cite{ChenLiu2005,ChenZheng2010,JiangLi2020,LuLaiWu2024},
are also the estimates used below to compare exact PML Green functions with
stretched free-space kernels.  These PDE results estimate the solution of a
truncated PML boundary value problem; an additional boundary-integral argument
is needed to transfer such estimates to PML-BIEs.

A different related direction is the complexified BIE analysis in
\cite{EpsteinGreengardHoskinsJiangRachh2024} for the Helmholtz equation with
Dirichlet boundary conditions in locally perturbed half-spaces.  The analysis
treats analytic continuation of a Dirichlet double-layer density and contour
deformation of the resulting integral.  In the Dirichlet half-space setting, the
flat background Green function is already available by the elementary image
construction, so the motivation differs from the impedance and layered settings
considered here.  

This paper establishes a convergence theory for PML-BIE methods and formulates a
proof mechanism that can be adapted to more general settings.  The half-space
acoustic scattering problem with an impedance boundary condition is used as the
first model problem because its operator structure is transparent while
retaining the key obstruction: the true physical Green function is nontrivial
and contains both propagating and surface-wave contributions \cite{JiangLi2020}.
The final truncation error is proved to be exponentially small as the PML
absorption power increases. Transmission, layered, and water-wave problems
require different background Calder\'on systems, but the mechanism developed
here is intended to be reusable in those settings. 

The proof proceeds by inserting an exact finite-domain PML Green function
between the PDE-PML convergence theorem and the computable PML-BIE.  The
artificial boundary used in this paper is a homogeneous Neumann boundary rather
than the Dirichlet boundary often used in PML truncations.  This choice is tied
to the open-arc trace space of the computable PML-BIE.  The unknown trace is
sought in the ordinary space $H^{1/2}$; a Dirichlet boundary would naturally
lead to endpoint-compatible spaces such as $H^{1/2}_{00}$, whereas the Neumann
boundary keeps the retained impedance trace in the same open-arc space as the
computable BIE.

We next summarize the proof framework.  Section~\ref{sec:problem} defines the
impedance scattering problem, the
retained arc $\Gamma_R$, the open-arc trace space
\[
  Y_R=H^{1/2}(\Gamma_R),
  \qquad Y_R'=H^{-1/2}_{00}(\Gamma_R),
\]
and the Neumann PML truncation.  Section~\ref{sec:conv-pmlpde} proves the
Neumann PML-PDE estimate, including the trace stability and PML-segment
decay needed later.  Section~\ref{sec:exact-green-pde} constructs the exact
Neumann PML Green function $G_{\sigma,N}^0$, derives the exact PML-Green BIE
\begin{equation*}
  \mathcal A_{\sigma,N}^0\phi_\sigma=\mathcal S_{\sigma,N}^0g,
  \qquad
  \mathcal A_{\sigma,N}^0=\frac12 I+\mathcal K_{\sigma,N}^0+
  \mathcal S_{\sigma,N}^0M_\beta,
\end{equation*}
proves its equivalence with the Neumann PML PDE, and obtains the convergence and
polynomial stability.  Section~\ref{sec:computable} studies the computable
PML-BIE
\[
  \mathcal A_\sigma^c\phi_\sigma^c=\mathcal S_\sigma^c g,
  \qquad
  \mathcal A_\sigma^c=\frac12I+\mathcal K_\sigma^c+
  \mathcal S_\sigma^c M_\beta,
\]
which uses the stretched free-space Green function $\Phi_\sigma$.
We decompose the difference ${\cal A}_{\sigma,N}^0 - {\cal A}_{\sigma}^c$
into the localized nearest-image perturbation $\mathcal B_\sigma^{\rm im}$ and
an exponentially small
remainder, and prove the estimate $\|(\mathcal A_{\sigma,N}^0)^{-1} {\cal
B}_{\rm im}\|\to0$. This immediately implies the unique solvability of the
computable PML-BIE and the exponential convergence of the approximate density.
Section~\ref{sec:cs-bie} recalls the complex-scaled BIE interpretation, and
Section~\ref{sec:conclusion} concludes the paper.

\section{Problem description}\label{sec:problem}

\subsection{\texorpdfstring{The impedance scattering problem}{The impedance scattering problem}}

We follow the notation of \cite{JiangLi2020} as closely as possible. The original unbounded scattering geometry is shown in Figure~\ref{fig:geometry}(a).  Let
\begin{equation}
  \Sigma=\{(x_1,p(x_1)):x_1\in\mathbb R\},
  \qquad
  \mathbb R^2_{\Sigma,+}:=\{x\in\mathbb R^2:x_2>p(x_1)\},
\end{equation}
where the profile $p$ is piecewise $C^1$ in $[-1,1]$. Thus $\Sigma$ is a local perturbation of
\begin{equation}
  \Sigma_0:=\{(x_1,0):x_1\in\mathbb R\}.
\end{equation}
We write
\begin{equation}
  \Sigma=\Sigma_\infty\cup\Sigma_p,
  \qquad
  \Sigma_\infty:=\{(x_1,0): |x_1|\ge 1\},
  \qquad
  \Sigma_p:=\Sigma\setminus\Sigma_\infty .
\end{equation}
The unit normal $\nu$ points to the exterior of $\mathbb R^2_{\Sigma,+}$, i.e. downward on the flat part of the boundary.
\begin{figure}[t]
\centering
\begin{tikzpicture}[
  scale=0.92,
  >=Latex,
  font=\footnotesize,
  bdry/.style={very thick,blue!70!black},
  trunc/.style={very thick,red!75!black},
  outerbd/.style={thick,black},
  pml/.style={fill=gray!13},
  core/.style={fill=white},
  ray/.style={->,thick,gray!70!black}
]
\begin{scope}[xshift=-4.15cm]
  \node[font=\small\bfseries] at (0,2.55) {(a) Original scattering problem};

  \draw[densely dashed,gray!70] (-3.25,0) -- (-1.45,0);
  \draw[densely dashed,gray!70] (1.45,0) -- (3.25,0);
  \node[gray!70,anchor=north east] at (-2.35,-0.03) {$\Sigma_0$};

  \draw[bdry]
    plot[smooth,tension=0.85] coordinates
    {(-3.35,0) (-2.45,0) (-1.45,0.04) (-0.70,0.25)
     (0,0.42) (0.70,0.25) (1.45,0.04) (2.45,0) (3.35,0)};
  \node[blue!70!black,anchor=north west] at (2.25,-0.02) {$\Sigma$};

  \draw[trunc]
    plot[smooth,tension=0.85] coordinates
    {(-1.32,0.05) (-0.70,0.25) (0,0.42) (0.70,0.25) (1.32,0.05)};
  \node[red!75!black,anchor=south] at (0,0.36) {$\Sigma_p$};

  \node[blue!55!black] at (0,1.50) {$\mathbb R^2_{\Sigma,+}$};
  \draw[->,blue!70!black] (-0.27,0.42) -- (0.3,-0.7);
  \node[blue!70!black,anchor=south east] at (0.3,-0.7) {$\nu$};

\end{scope}

\begin{scope}[xshift=4.15cm]
  \node[font=\small\bfseries] at (0,2.55) {(b) Neumann PML truncation};

  \fill[pml] (-3.25,-0.95) rectangle (3.25,2.15);
  \fill[core] (-2.08,-0.32) rectangle (2.08,1.45);
  \draw[dashed,gray!75] (-2.08,-0.95) -- (-2.08,2.15);
  \draw[dashed,gray!75] ( 2.08,-0.95) -- ( 2.08,2.15);
  \draw[dashed,gray!75] (-3.25,1.45) -- (3.25,1.45);
  \node[gray!70] at (0,1.78) {PML layers};

  \draw[outerbd] (-3.25,-0.95) rectangle (3.25,2.15);
  \node[anchor=north west] at (-3.18,2.10) {$\mathcal D_R$};
  \node[anchor=south east] at (3.18,-0.92) {$\Sigma_R=\partial\mathcal D_R$};

  \draw[bdry,dashed] (-3.65,0) -- (-3.25,0);
  \draw[bdry,dashed] (3.25,0) -- (3.65,0);
  \draw[trunc]
    plot[smooth,tension=0.85] coordinates
    {(-3.25,0) (-2.45,0) (-1.45,0.05) (-0.72,0.25)
     (0,0.42) (0.72,0.25) (1.45,0.05) (2.45,0) (3.25,0)};
  \fill[red!75!black] (-3.25,0) circle (1.3pt);
  \fill[red!75!black] ( 3.25,0) circle (1.3pt);
  \node[red!75!black,anchor=south] at (0,0.32) {$\Gamma_R$};
  \node[blue!70!black,anchor=north] at (3.52,-0.02) {$\Sigma$};

  \node[blue!55!black] at (0,1.1) {$\Omega_R$};
  \node[blue!55!black] at (-0.7,-0.4) {$\Omega_R^-$};

  \draw[->,red!75!black] (-0.25,0.42) -- (0.3,-0.7);
  \node[red!75!black,anchor=south east] at (0.3,-0.7) {$\nu_\sigma$};

  \node[anchor=north west,align=left] at (-3.20,-1.02)
    {Neumann boundary: $\partial_{\nu_\sigma}u_\sigma=0$ on $\Sigma_R$};
\end{scope}
\end{tikzpicture}
\caption{Geometry of the impedance scattering problem and its Neumann PML truncation.}  
\label{fig:geometry}
\end{figure}
The physical boundary condition is homogeneous for the total field $U$:
\begin{align}
  \Delta U+k^2U&=0 &&\text{in }\mathbb R^2_{\Sigma,+},\label{eq:total-pde}\\
  \partial_\nu U-\ii k\beta U&=0 &&\text{on }\Sigma.\label{eq:total-imp}
\end{align}
Here $k>0$ is the wave number and $\beta\in L^\infty(\Sigma)$ is the acoustic admittance.  Following the impedance convention in \cite{JiangLi2020}, we assume
\begin{equation}\label{eq:beta-assump}
  \operatorname{Re}\beta\ge0,
  \qquad
  \beta\equiv -\ii Z/k\quad\text{on }\Sigma_\infty,
  \qquad Z>0 .
\end{equation}
With this convention, the flat part of the boundary condition is equivalent to
\begin{equation}
  \partial_\nu U-ZU=0\quad\text{on }\Sigma_\infty .
\end{equation}

Let $U^{\rm inc}$ be an incident field.  To extract an outgoing field, we subtract a known reference solution $U^{\rm ref}$ associated with the flat impedance half-plane.  For a plane wave, $U^{\rm ref}$ is the incident plus reflected flat-boundary solution.  For a point source, one may subtract either the free-space Green function $\Phi_k$, which gives boundary data that is not compactly supported but is rapidly damped in the PML, or the flat impedance background Green function \cite[Sec. 2.4]{JiangLi2020}, which gives compactly supported boundary data.  To simplify the presentation, we consider the compactly supported case only.

Define
  $u:=U-U^{\rm ref}$.
Then $u$ satisfies 
\begin{align}
  \Delta u+k^2u&=0 &&\text{in }\mathbb R^2_{\Sigma,+},\label{eq:phys-pde}\\
  \partial_\nu u-\ii k\beta u&=g &&\text{on }\Sigma,\label{eq:phys-imp}
\end{align}
where
\begin{equation}\label{eq:g-from-reference}
  g:=-(\partial_\nu U^{\rm ref}-\ii k\beta U^{\rm ref})\in H^{-1/2}_{00}(\Sigma_p)\hookrightarrow H^{-1/2}(\Sigma).
\end{equation}
At infinity, $u$ contains both the ordinary outgoing component and the surface-wave component generated by the impedance boundary, i.e.,
\begin{equation}\label{eq:phys-src}
  \lim_{r\to\infty}\int_{S_r^1}
  |\partial_r u-\ii k u|^2ds=0,
  \qquad
  \lim_{r\to\infty}\int_{S_r^2}
  |\partial_r u-\ii\sqrt{Z^2+k^2}u|^2ds=0,
\end{equation}
where 
  $S_r:=\{x\in\mathbb R^2_+:|x|=r\}$,
  $S_r^1:=\{x\in S_r:x_2\ge r^{1/4}\}$, and
  $S_r^2:=S_r\setminus S_r^1$.
The first Sommerfeld-like radiation condition selects the outgoing propagating wave, while the second
selects the outgoing surface wave along the flat boundary. We refer readers to
\cite{JiangLi2020} and the references therein for the well-posedness of the scattering problem \eqref{eq:phys-pde}--\eqref{eq:phys-src}.

\subsection{\texorpdfstring{UPML and the Neumann PML problem}{UPML and the Neumann PML problem}}
We now introduce the UPML stretching in the notation of \cite{JiangLi2020}. The complex coordinate stretching is defined as
\begin{equation}\label{eq:pml-stretch-explicit}
  \widetilde x_j=F_{\sigma,j}(x_j):=\int_0^{x_j}\alpha_j(t)\,dt,
  \qquad j=1,2,
\end{equation}
where 
\begin{equation}\label{eq:alpha-j}
  \alpha_j(t):=1+\ii\sigma_j(t),
  \qquad j=1,2,
\end{equation}
and
\begin{equation}\label{eq:sigma-profile-JL}
  \sigma_j(t)\ge0,
  \qquad
  \sigma_j(t)=\sigma_j(-t),
  \qquad
  \sigma_j(t)=0\quad\text{for } |t|\le L_j .
\end{equation}
The regions with nonzero $\sigma_j$ are the so-called PML regions. We choose $L_1>1$ and $L_2>\|p\|_{L^\infty}$ to ensure that the physical region $\Omega_p$ outside the PML encloses the perturbed curve $\Sigma_p$. Put
\begin{equation}\label{eq:DR-SigmaR-def}
  \mathcal D_R=(-L_1-d_1,L_1+d_1)\times(-L_2-d_2,L_2+d_2),
  \qquad
  \Sigma_R=\partial\mathcal D_R .
\end{equation}
We impose the zero Neumann boundary condition on $\Sigma_R$. Thus, $d_j>0$ represents the truncated PML thickness in the $x_j$ direction.  The finite Neumann PML truncation is shown in Figure~\ref{fig:geometry}(b). The truncated impedance boundary is
  $\Gamma_R:=\Sigma\cap\overline{\mathcal D_R}$.
After cutting $\mathcal D_R$ along $\Gamma_R$, the plus-side component is
  $\Omega_R:=\mathcal D_R\cap\mathbb R^2_{\Sigma,+}$,
and the complementary lower component is denoted by $\Omega_R^-$. 

The PML-transformed Helmholtz operator is
\begin{equation}\label{eq:Lsigma}
  \mathcal L_\sigma u:=\nabla\cdot(A_\sigma\nabla u)+k^2J_\sigma u,
\end{equation}
where
  $A_\sigma=\operatorname{diag}(\alpha_2/\alpha_1,\alpha_1/\alpha_2)$ and
  $J_\sigma=\alpha_1\alpha_2$.
The corresponding conormal derivative is
  $\partial_{\nu_\sigma}u:=\nu\cdot A_\sigma\nabla u$,
and we denote
\begin{equation}\label{eq:adjoint-conormal}
  \partial_{\nu_\sigma,y}^{*}G(x,y)
  :=\nu(y)\cdot A_\sigma(y)\nabla_y G(x,y),
\end{equation}
where the notation $\partial_{\nu_\sigma}^{*}$ stresses that the derivative is associated with the source variable $y$. The Neumann PML problem is posed as:
\begin{align}
  \mathcal L_\sigma u_\sigma&=0 &&\text{in }\Omega_R,\label{eq:pmlN-pde}\\
  \partial_{\nu_\sigma}u_\sigma+ M_\beta u_\sigma&=g &&\text{on }\Gamma_R,\label{eq:pmlN-imp}\\
  \partial_{\nu_\sigma}u_\sigma&=0 &&\text{on }\Sigma_R\cap\Omega_R,\label{eq:pmlN-boundary}
\end{align}
where the multiplication operator $M_\beta$ is defined by
 $M_\beta\phi:=(-\ii k\beta)\phi$.

We use the same choices of PML parameters as in \cite{JiangLi2020}.  More precisely,
we take 
\begin{equation}\label{eq:constant-layer-profile}
  \sigma_j(t)=\bar\sigma\,\sigma_j^0(t),\qquad
  \sigma_j^0(t)=d_j^{-1}{\bf 1}_{(L_j,L_j+d_j)\cup(-L_j-d_j,-L_j)}(t),
  \qquad j=1,2.
\end{equation}
such that
\begin{equation}\label{eq:H1-JL}
  \int_{L_j}^{L_j+d_j}\sigma_j(t)\,dt=\bar\sigma,
  \qquad j=1,2,
  \qquad \bar\sigma\ge1 .
\end{equation}
We use the following conventions. 
In the subsequent analysis, \(L_1,L_2,d_1\) are fixed, while \(\bar\sigma\to\infty\) and
\(d_2=d_2(\bar\sigma)\to\infty\) in error estimates. The vertical
thickness $d_2$ is chosen large enough to control the surface-wave factor;
in particular, the simple choice $d_2=\bar{\sigma}$ is sufficient. The generic
constant $C$ and a factor denoted by \(\mathcal P(\bar\sigma)\) may depend on the fixed geometry, on the fixed tangential layer
thickness, on \(k\), \(Z\), and on
\(\beta\), but are independent of the boundary data $g$.  

\subsection{\texorpdfstring{Computable PML-BIE and solution spaces}{Computable PML-BIE and solution spaces}}\label{subsec:compbie}

As mentioned in Introduction, the computable PML-BIE uses the stretched free-space kernel
\begin{equation}\label{eq:Phi-sigma}
  \Phi_\sigma(x,y):=\Phi_k(F_\sigma(x),F_\sigma(y)), \qquad \Phi_k(x,y)=\frac{\ii}{4}H_0^{(1)}(k|x-y|).
\end{equation}
Define the following single-layer and double-layer operators in $\Omega_R$
\begin{align}
  (\mathrm{SL}_\sigma^c q)(x)&:=\int_{\Gamma_R}\Phi_\sigma(x,y)q(y)\,ds_y,\label{eq:S-tilde-intro}\\
  (\mathrm{DL}_\sigma^c\phi)(x)&:=\int_{\Gamma_R}\partial_{\nu_\sigma,y}^{*}\Phi_\sigma(x,y)\phi(y)\,ds_y.
  \label{eq:D-tilde-intro}
\end{align}
The PML-BIE method \cite{LuLuQian2018} uses the following approximate Green identity
\begin{equation}
  u^c_\sigma (x) \approx (\mathrm{SL}_\sigma^c \partial_{\nu_\sigma}u^c_\sigma)(x)  - (\mathrm{DL}_\sigma^c u^c_\sigma)(x) = [\mathrm{SL}_\sigma^c (g - M_\beta u^c_\sigma)](x)  - (\mathrm{DL}_\sigma^c u^c_\sigma)(x), \qquad x\in \Omega_R, \label{eq:appGreen-identity}
\end{equation}
where the fields on the PML boundary $\Sigma_R\cap\Omega_R$ are discarded. Taking the trace as $x$ approaches $\Gamma_R$, we obtain the computable PML-BIE formulation
\begin{equation}\label{eq:computable-bie}
  \left(\frac12I+\mathcal K_\sigma^c+
  \mathcal S_\sigma^c M_\beta\right)\phi_\sigma^c=\mathcal S_\sigma^c g,\qquad{\rm on}\ \Gamma_R,
\end{equation}
where the boundary operators are
\begin{align}
  (\mathcal S_\sigma^c q)(x)&:=\int_{\Gamma_R}\Phi_\sigma(x,y)q(y)\,ds_y,
  \qquad x\in\Gamma_R,\label{eq:V-tilde-intro}\\
  (\mathcal K_\sigma^c\phi)(x)&:=\operatorname{p.v.}\int_{\Gamma_R}\partial_{\nu_\sigma,y}^{*}\Phi_\sigma(x,y)\phi(y)\,ds_y,
  \qquad x\in\Gamma_R,\label{eq:K-tilde-intro}
\end{align}
and $\phi_\sigma^c=u_\sigma^c|_{\Gamma_R}$ is the unknown to be determined. We expect that the resulting field $u_\sigma^c$ in
\eqref{eq:appGreen-identity} converges exponentially to the
outgoing solution of \eqref{eq:phys-pde}--\eqref{eq:phys-src} in the physical
region $\Omega_p$. Consequently, the main task of this paper is to prove unique
solvability of \eqref{eq:computable-bie} and to establish exponential
convergence.

Naturally, the trace spaces used for the boundary integral formulation \eqref{eq:computable-bie} are
\begin{equation}\label{eq:YR-def}
  Y_R:=H^{1/2}(\Gamma_R),
  \qquad
  Y_R':=(H^{1/2}(\Gamma_R))'=H^{-1/2}_{00}(\Gamma_R),
\end{equation}
because the unknown trace should be sought in $Y_R$. Therefore, a Dirichlet PML is unsuitable for the present trace-space formulation, since it provides a trace in the incompatible space $H_{00}^{1/2}(\Gamma_R)$. This is the essential motivation for imposing the Neumann PML above: $u_\sigma$ is naturally sought in $H^1(\Omega_R)$ with the same trace space $Y_R$ on $\Gamma_R$.  The dual space $Y_R'$ is the corresponding conormal-data space.  Since $g$ is compactly supported on $\Gamma_R$, we have $g|_{\Gamma_R}\in Y_R'$. The standard trace mapping properties of the stretched free-space layer operators give
\begin{equation}
  M_\beta: Y_R\to Y_R',\qquad \mathcal S_\sigma^c:Y_R'\to Y_R,
  \qquad
  \mathcal K_\sigma^c:Y_R\to Y_R.
\end{equation}
Thus the computable PML-BIE problem \eqref{eq:computable-bie} can be restated as follows: for any $g\in Y_R'$, find $\phi_\sigma^c\in Y_R$ that satisfies  
\begin{equation}
  \mathcal A_\sigma^c\phi_\sigma^c=\mathcal S_\sigma^c g,
  \qquad
  \mathcal A_\sigma^c:=\frac12I+\mathcal K_\sigma^c+
  \mathcal S_\sigma^c M_\beta:\ Y_R\to Y_R.
\label{eq:computable-bie2}
\end{equation}
Note that only \(L_1,d_1\) and the horizontal layer profile of the PML enter these boundary operators. Later, in Section~\ref{sec:computable}, we seek $\phi_\sigma^c$ in a more restricted subspace of $Y_R$.

To analyze \eqref{eq:computable-bie2}, we first need to establish a related exact PML-BIE formulation for $u_\sigma$ in the Neumann PML problem \eqref{eq:pmlN-pde}--\eqref{eq:pmlN-boundary}.

\section{\texorpdfstring{Convergence theory of the Neumann PML PDE}{Convergence theory of the Neumann PML PDE}}\label{sec:conv-pmlpde}
In this section, we establish the convergence theory of the Neumann PML problem
\eqref{eq:pmlN-pde}--\eqref{eq:pmlN-boundary}. Since the proof relies on
essentially the same techniques as those developed in
\cite{JiangLi2020,ChenZheng2010}, we make only necessary remarks below.
Throughout the rest of this paper, we will by default assume $\bar{\sigma}$
sufficiently large.

Set
\begin{equation}\label{eq:Q0-P-def}
  Q_0:=(-L_1,L_1)\times(-L_2,L_2),\qquad
  Q_R:=\mathcal D_R=(-L_1-d_1,L_1+d_1)\times(-L_2-d_2,L_2+d_2),
\end{equation}
so that the physical region $\Omega_p=Q_0\cap \Omega_R$, and define the full PML layer
\begin{equation}\label{eq:P-def}
  P:=\Omega_R\cap(Q_R\setminus\overline{Q_0}).
\end{equation}
Thus \(P\) contains the horizontal, vertical, and corner PML subregions.  The boundary pieces used in the layer estimates are
\begin{equation}\label{eq:pml-layer-boundary-pieces}
  \Gamma_{\rm in}:=\partial Q_0\cap\mathbb R^2_{\Sigma,+},\qquad
  \Gamma_{\rm out}:=\partial Q_R\cap\mathbb R^2_{\Sigma,+},\qquad
  \Sigma_{\rm pml}:=\Sigma\cap(Q_R\setminus\overline{Q_0}).
\end{equation}

The next three lemmas study some important properties in the PML layer $P$.
\begin{lemma}\label{lem:neumann-layer-correction}
For any \(r\in H^{-1/2}(\Gamma_{\rm out})\), the problem
\begin{align}
  \mathcal L_\sigma \eta&=0 &&\text{in }P,\label{eq:layer-corr-pde}\\
  \eta&=0 &&\text{on }\Gamma_{\rm in},\label{eq:layer-corr-inner}\\
  \partial_{\nu_\sigma}\eta&=r &&\text{on }\Gamma_{\rm out},\label{eq:layer-corr-outer}\\
  \partial_{\nu_\sigma}\eta-\ii k\beta\eta&=0 &&\text{on }\Sigma_{\rm pml}\label{eq:layer-corr-imp}
\end{align}
has a unique solution in \(V_P:=\{v\in H^1(P): v=0\text{ on }\Gamma_{\rm in}\}\), and
\begin{equation}\label{eq:layer-corr-est}
  \|\eta\|_{H^1(P)}+
  \|\partial_{\nu_\sigma}\eta|_{\Gamma_{\rm in}}\|_{H^{-1/2}(\Gamma_{\rm in})}
  \le \mathcal P(\bar\sigma)\|r\|_{H^{-1/2}(\Gamma_{\rm out})}.
\end{equation}
The same estimate holds if the last condition is replaced by \(\eta=0\) on \(\Sigma_{\rm pml}\).
\end{lemma}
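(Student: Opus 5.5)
The plan is to prove coercivity of the sesquilinear form after multiplying by a suitable complex rotation. The only issue is that the layer $P$ is a stretched but bounded region. Any crude constant depending polynomially on $\bar\sigma$ (and on $d_2=\bar\sigma$) is acceptable.

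\textbf{Variational formulation.} I would write the problem in $V_P$ as
\[
a(\eta,v):=\int_P \bigl(A_\sigma\nabla\eta\cdot\nabla\bar v-k^2J_\sigma\eta\bar v\bigr)\,dx+\int_{\Sigma_{\rm pml}}\ii k\beta\,\eta\bar v\,ds=\langle r,v\rangle_{\Gamma_{\rm out}}\qquad\forall v\in V_P.
\]
Here the signs of the impedance term follow from the outward conormal conventions on $\Sigma_{\rm pml}$. For the Dirichlet variant on $\Sigma_{\rm pml}$, one replaces $V_P$ by the subspace vanishing on $\Sigma_{\rm pml}$ and drops the boundary integral.

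\textbf{Existence, uniqueness and the $H^1$ bound.} The key point is that the layer is exactly the region where the stretching is active. In the profile \eqref{eq:constant-layer-profile}, every point of $P$ has $\sigma_1=\bar\sigma/d_1$ or $\sigma_2=\bar\sigma/d_2$ (or both), so $\operatorname{Im}J_\sigma$ is bounded below there.

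I would test with $v=\eta$ and take a suitable combination $\operatorname{Re}(e^{-\ii\theta}a(\eta,\eta))$. The aim is to control $\|\nabla\eta\|^2$ and $\|\eta\|^2$ by a positive multiple of $\min(\bar\sigma/d_j)$ divided by a power of $|\alpha_j|$. This is the standard argument of \cite{ChenZheng2010,JiangLi2020} for the constant-layer UPML.

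\textbf{Main obstacle: the $L^2$ control.} The difficulty is that $A_\sigma$ has eigenvalues $\alpha_2/\alpha_1$ and $\alpha_1/\alpha_2$, whose arguments differ in the different subregions. In the tangential layer only $\alpha_1$ is complex, in the vertical layer only $\alpha_2$, and in the corners both.

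A single rotation may therefore not give coercivity everywhere. I would first try the following: in each subregion, multiply by the conjugate weight $\bar\alpha_j/|\alpha_j|$ implicitly. Equivalently, use the test function $v=\eta$ with the imaginary part of the form to control $\bar\sigma\|\eta\|^2$ where $\operatorname{Im}J_\sigma>0$.

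If this does not work, I would fall back on Fredholm theory. The form is a compact perturbation of a coercive one, since $\Gamma_{\rm in}$ has positive measure and Poincaré holds in $V_P$. Uniqueness then comes from the energy identity: $\operatorname{Im}$ of $a(\eta,\eta)=0$ gives $\int\operatorname{Im}(\cdot)=0$, which forces $\eta=0$ in the corner and layer regions via unique continuation from the zero Cauchy data on $\Gamma_{\rm in}$. That route only gives an unquantified constant, though.

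For the polynomial dependence on $\bar\sigma$, I would instead map $P$ back through the explicit separable structure. On each rectangle, $\mathcal L_\sigma$ with constant coefficients is a Helmholtz operator with complex wavenumber in scaled variables. I would then bound the inverse by the distance of $k^2$ to the spectrum of the Dirichlet–Neumann mixed Laplacian, which grows like $\bar\sigma/d_j$ in the imaginary direction. That yields $\|\eta\|_{H^1(P)}\le\mathcal P(\bar\sigma)\|r\|$ via the trace inequality $|\langle r,v\rangle|\le C\|r\|\,\|v\|_{H^1}$. The trace constant may grow with $d_2$, but only polynomially.

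\textbf{Conormal trace on $\Gamma_{\rm in}$.} I would define $\partial_{\nu_\sigma}\eta|_{\Gamma_{\rm in}}$ weakly. For $w\in H^{1/2}(\Gamma_{\rm in})$, take a bounded extension $Ew\in H^1(P)$ supported near $\Gamma_{\rm in}$, and set
\[
\langle\partial_{\nu_\sigma}\eta,w\rangle:=a(\eta,Ew)-\langle r,Ew\rangle.
\]
Since $Ew$ can be chosen to vanish on $\Gamma_{\rm out}$, the last term drops out. The bound then follows from $|a(\eta,Ew)|\le C|\alpha|^2\|\eta\|_{H^1}\|w\|_{H^{1/2}}$, giving the second term of \eqref{eq:layer-corr-est}.
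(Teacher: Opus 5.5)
Your treatment of the conormal trace on $\Gamma_{\rm in}$ is exactly the paper's argument: you test with an $H^1(P)$ extension that vanishes on $\Gamma_{\rm out}$. The gap is the main step. None of your routes proves unique solvability in $V_P$ with $\|\eta\|_{H^1(P)}\le\mathcal P(\bar\sigma)\|r\|_{H^{-1/2}(\Gamma_{\rm out})}$.

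\textbf{The first route.} It rests on a false premise, because $\operatorname{Im}a(\eta,\eta)$ is indefinite. In a side strip, where $\alpha_2=1$ and $\alpha_1=1+\ii\sigma_1$ is constant, the volume part of $\operatorname{Im}a(\eta,\eta)$ is $-\sigma_1|\alpha_1|^{-2}\|\partial_1\eta\|^2+\sigma_1\|\partial_2\eta\|^2-k^2\sigma_1\|\eta\|^2$. So $\operatorname{Im}J_\sigma>0$ gives no control of $\bar\sigma\|\eta\|^2$. The weight $\bar\alpha_1/|\alpha_1|$ also fails, since $\operatorname{Re}(\bar\alpha_1/\alpha_1)=(1-\sigma_1^2)/|\alpha_1|^2<0$.

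What works in such a strip is $\operatorname{Re}a-\sigma_1^{-1}\operatorname{Im}a$, i.e. the multiplier $\ii\bar\alpha_1/\sigma_1$. It cancels the tangential and zeroth-order terms exactly and leaves $2|\alpha_1|^{-2}\|\partial_1\eta\|^2$. Friedrichs' inequality, from $\eta=0$ on $\Gamma_{\rm in}$, then bounds $\|\eta\|_{L^2}$, and the real part recovers $\|\partial_2\eta\|$. A Friedrichs-based coercivity estimate of this kind is what the paper imports from \cite[Lemma~5.1]{JiangLi2020}, with Friedrichs applied on all of $P$.

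Your sketch also ignores two points any such argument must handle:
\begin{itemize}
\item The right multiplier differs between the side strips, the top strip and the corners; with $d_2=\bar\sigma$ one has $\sigma_2=1\ne\sigma_1$. Since $m\eta\notin H^1(P)$ for piecewise-constant $m$, ``multiplying implicitly in each subregion'' is not an admissible test function.
\item The impedance term has the opposite sign to the one you wrote: it is $-\int_{\Sigma_{\rm pml}}\ii k\beta\,\eta\bar v=-Z\int_{\Sigma_{\rm pml}}\eta\bar v$. It therefore enters every such combination with the unfavourable sign (it carries the surface wave) and has to be absorbed.
\end{itemize}

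\textbf{The fallbacks.} They do not close the gap.

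The Fredholm property itself is fine, since $\operatorname{Re}A_\sigma$ is positive definite. Your uniqueness argument is not. The vanishing of an indefinite imaginary part implies nothing. Unique continuation needs zero Cauchy data, but on $\Gamma_{\rm in}$ only $\eta$ vanishes and $\partial_{\nu_\sigma}\eta$ is unknown. This route also gives no $\bar\sigma$-dependence, which is the whole content of the lemma.

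The spectral route needs a separable, normal operator. $P$ is instead a union of strips and corners with different constant coefficients, coupled across interfaces and bounded below by the impedance segment. The global operator is neither separable nor normal, so the distance from $k^2$ to the spectrum does not bound the inverse. Even on a single side-strip model the claimed growth is wrong. Take the eigenvalues $\alpha_1^{-2}\mu_m+\nu_n$ of $-(\alpha_1^{-2}\partial_1^2+\partial_2^2)$, with $\mu_m=((m+\tfrac12)\pi/d_1)^2$ and real $\nu_n$. Then $|\alpha_1^{-2}\mu_0|\sim\bar\sigma^{-2}$, so when some $\nu_n$ is close to $k^2$ the distance to $k^2$ shrinks polynomially rather than growing like $\bar\sigma/d_1$.
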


\begin{proof}
The well-posedness proof follows essentially the same argument as Lemma~5.1 in \cite{JiangLi2020}. The only difference is that Friedrichs' inequality is applied directly in the whole PML layer $P$, rather than only in the corner PML regions, because the Neumann boundary condition \eqref{eq:layer-corr-outer} is now imposed. We thus obtain
\begin{equation}
  \|\eta\|_{H^1(P)}
  \le \mathcal P(\bar\sigma)
       \|r\|_{H^{-1/2}(\Gamma_{\rm out})}.
\end{equation}
The conormal trace on \(\Gamma_{\rm in}\) is estimated by the variational definition: for \(\psi\in H^{1/2}(\Gamma_{\rm in})\), take an extension \(V\in H^1(P)\) with \(V|_{\Gamma_{\rm in}}=\psi\) and \(V=0\) on \(\Gamma_{\rm out}\).  Testing the problem with this extension leads to 
\begin{equation}
  |\langle\partial_{\nu_\sigma}\eta,\psi\rangle_{\Gamma_{\rm in}}|
  \le C(1+\|\beta\|_{L^\infty(\Sigma_{\rm pml})})
      \|\eta\|_{H^1(P)}\|V\|_{H^1(P)}
  \le C\|\eta\|_{H^1(P)}\|\psi\|_{H^{1/2}(\Gamma_{\rm in})}.
\end{equation}
This proves the second estimate in \eqref{eq:layer-corr-est}.  The remaining details are omitted.
\end{proof}

\begin{lemma}\label{lem:stretched-green-estimates}
Let \(E(f)\) be the complex-stretched extension in the PML layer $P$ associated with boundary data \(f\in H^{1/2}(\Gamma_{\rm in})\). There exists \(c>0\), independent of $\bar{\sigma}$, such that
\begin{equation}\label{eq:JL-outer-conormal-summary}
  \|\partial_{\nu_\sigma}E(f)|_{\Gamma_{\rm out}}\|_{H^{-1/2}(\Gamma_{\rm out})}
  \le \mathcal P(\bar\sigma)
     \left(e^{-ck\bar\sigma}+e^{-Z(L_2+d_2)}\right)
     \|f\|_{H^{1/2}(\Gamma_{\rm in})} .
\end{equation}
Moreover, if \(\chi_{\rm tail}\) is supported in $\overline{\Sigma_{\rm pml}}$
with support well-separated from $\overline{\Omega_p}$, then 
\begin{equation}\label{eq:JL-tail-summary}
  \|\chi_{\rm tail}E(f)\|_{Y_R}
  +\|\chi_{\rm tail}\partial_{\nu_\sigma}E(f)\|_{Y_R'}
  \le \mathcal P(\bar\sigma)e^{-c\bar\sigma}
     \|f\|_{H^{1/2}(\Gamma_{\rm in})} .
\end{equation}
\end{lemma}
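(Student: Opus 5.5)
The plan follows the Jiang--Li construction \cite{JiangLi2020} of the complex-stretched extension. We represent \(E(f)\) explicitly and then read off decay on \(\Gamma_{\rm out}\) and along \(\Sigma_{\rm pml}\).

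\textbf{Construction of \(E(f)\).} Given \(f\in H^{1/2}(\Gamma_{\rm in})\), let \(w\) be the outgoing solution of the flat impedance problem outside \(Q_0\) with Dirichlet data \(f\) on \(\Gamma_{\rm in}\). We write \(w\) through the flat impedance Green function, or equivalently through its Fourier/Sommerfeld representation \cite[Sec.~2.4]{JiangLi2020}. That representation splits \(w\) into
\begin{itemize}[leftmargin=*]
\item a propagating (Hankel-type) part, and
\item a surface-wave part behaving like \(e^{\ii\sqrt{Z^2+k^2}|x_1|}e^{-Zx_2}\).
\end{itemize}
We then define \(E(f):=w\circ F_\sigma\) in \(P\). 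By analytic continuation in each coordinate, \(\mathcal L_\sigma E(f)=0\) in \(P\), \(E(f)=f\) on \(\Gamma_{\rm in}\), and the impedance condition on \(\Sigma_{\rm pml}\) is preserved. Since \(\Sigma_{\rm pml}\) is flat and \(\alpha_2=1\) near \(x_2=0\), the stretching there is purely horizontal and commutes with the boundary operator.

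\textbf{Bound on \(\Gamma_{\rm out}\), estimate \eqref{eq:JL-outer-conormal-summary}.} We treat each term of the representation with pointwise kernel bounds.

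For the propagating part, we use the Hankel estimate from \cite{ChenLiu2005,ChenZheng2010}, namely \(|H_0^{(1)}(kz)|\lesssim e^{-k\operatorname{Im} z}\), together with \(\operatorname{Im}|\widetilde x-\widetilde y|\ge c\,\bar\sigma\) for \(x\in\Gamma_{\rm out}\) and \(y\in\Gamma_{\rm in}\). This follows from the lower bound \(\operatorname{Im}|\widetilde x-\widetilde y|\ge \operatorname{Im}\widetilde x_j\,\cdot\)(geometric factor) of \cite[Lemma~2.2]{ChenZheng2010}, and it gives the factor \(e^{-ck\bar\sigma}\).

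For the surface-wave part, the horizontal factor also picks up \(e^{-\operatorname{Im}\sqrt{Z^2+k^2}\,\widetilde x_1}\le e^{-c\bar\sigma}\) on the vertical sides of \(\Gamma_{\rm out}\). On the top side, however, the vertical factor \(e^{-Z x_2}\) is not enhanced by the stretching: the exponent \(Z\widetilde x_2\) has real part \(Z(L_2+d_2)\) and only an oscillating imaginary part. This is where the term \(e^{-Z(L_2+d_2)}\) comes from, and it is the reason \(d_2\) must grow.

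For the \(H^{-1/2}\) norm of the conormal derivative, we bound the \(L^2\) (indeed \(L^\infty\)) norm of \(A_\sigma\nabla E(f)\) on \(\Gamma_{\rm out}\). The density is handled by duality: \(f\mapsto w\) is represented by layer potentials with densities bounded in \(H^{\pm1/2}(\Gamma_{\rm in})\) by \(C\|f\|_{H^{1/2}}\). Powers of \(\bar\sigma\) coming from \(|\alpha_j|\), from derivatives of the kernel and from the lengths of \(\Gamma_{\rm out}\) (which involve \(d_2=\bar\sigma\)) are absorbed into \(\mathcal P(\bar\sigma)\).

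\textbf{Tail estimate \eqref{eq:JL-tail-summary}.} On \(\operatorname{supp}\chi_{\rm tail}\subset\Sigma_{\rm pml}\), well separated from \(\overline{\Omega_p}\), we have \(|x_1|\ge L_1+\delta\). Hence \(\operatorname{Im}\widetilde x_1\ge \bar\sigma\delta/d_1\), and both the Hankel part and the surface-wave part carry a factor \(e^{-c\bar\sigma}\), now with no \(x_2\)-limitation because \(x_2=0\). Since the kernel is smooth away from \(\Gamma_{\rm in}\), we control \(C^1\) norms on the support, which dominate the \(Y_R\) and \(Y_R'\) norms after multiplication by \(\chi_{\rm tail}\).

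\textbf{Main obstacle.} The hardest step is the uniform lower bound on \(\operatorname{Im}\) of the stretched distance in the corner regions, together with the corresponding uniform decay of the surface-wave term under complex \(x_1\), where the Sommerfeld-integral representation must be contour-deformed. I would follow \cite[Lemmas~3.x, 5.x]{JiangLi2020} essentially verbatim for this step.
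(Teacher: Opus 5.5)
Your proposal follows the same route as the paper, whose proof consists only of invoking the estimates for the complex-stretched impedance half-space Green function in the PML layer from \cite[Sec.~4]{JiangLi2020}. Your outline is an expanded version of exactly that argument: you represent the outgoing exterior solution through that Green function and stretch it, obtain $e^{-ck\bar\sigma}$ from the Hankel-type part, get the unenhanced surface-wave factor $e^{-Z(L_2+d_2)}$ on the top of $\Gamma_{\rm out}$, and use smooth-kernel bounds on $\operatorname{supp}\chi_{\rm tail}$.
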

\begin{proof}
These follow from the estimates of the impedance half-space Green function in the PML region $P$, as developed in \cite[Sec.~4]{JiangLi2020}.  
\end{proof}

\begin{lemma}\label{lem:neumann-dtn-perturbation}
Let \(T\) be the exact DtN map on \(\Gamma_{\rm in}\), defined by the impedance-line outgoing problem, and let \(T_N^\sigma\) be the finite-layer DtN map obtained by solving in the PML layer \(P\) with the impedance condition on \(\Sigma_{\rm pml}\) and the homogeneous conormal Neumann condition on \(\Gamma_{\rm out}\).  Then, 
\begin{equation}\label{eq:neumann-dtn-perturbation}
  \|T_N^\sigma-T\|_{H^{1/2}(\Gamma_{\rm in})\to H^{-1/2}(\Gamma_{\rm in})}
  \le \mathcal P(\bar\sigma)
      \left(e^{-c k\bar\sigma}+e^{-Z(L_2+d_2)}\right).
\end{equation}
\end{lemma}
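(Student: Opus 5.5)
The plan is to compare the two Neumann data on $\Gamma_{\rm in}$ produced from the same Dirichlet datum $f\in H^{1/2}(\Gamma_{\rm in})$, using the stretched extension $E(f)$ of Lemma~\ref{lem:stretched-green-estimates} as an intermediate object. The key observation is that $E(f)$ is the complex-stretched continuation into $P$ of the exact outgoing impedance-line solution $w_f$ with trace $f$ on $\Gamma_{\rm in}$. Hence $E(f)$ satisfies $\mathcal L_\sigma E(f)=0$ in $P$, the impedance condition on $\Sigma_{\rm pml}$, and $E(f)=f$ on $\Gamma_{\rm in}$.

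Since $\sigma_j=0$ near $\Gamma_{\rm in}$ (the stretching starts exactly at $|x_j|=L_j$, and $A_\sigma=I$ on the inner side), the conormal derivative of $E(f)$ on $\Gamma_{\rm in}$ coincides with that of $w_f$. More precisely, $\partial_{\nu_\sigma}E(f)|_{\Gamma_{\rm in}}=Tf$ with the sign convention of $T$. The analytic continuation preserves Cauchy data across the interface where $\alpha_j=1$. This is the one point needing care. I would justify it by checking that $E(f)\in H^1(P)$ and that the weak formulation on $Q_0\cup P$ glues, i.e.\ that the transmission conditions hold across $\Gamma_{\rm in}$ because $F_\sigma$ is $C^0$ and equals the identity there.

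Now let $v:=T$-problem solution in the finite layer, so that $T_N^\sigma f=\partial_{\nu_\sigma}v|_{\Gamma_{\rm in}}$, where $v$ solves the layer problem with $v=f$ on $\Gamma_{\rm in}$, $\partial_{\nu_\sigma}v=0$ on $\Gamma_{\rm out}$, and the impedance condition on $\Sigma_{\rm pml}$. Set $\eta:=E(f)-v$. Then $\eta$ solves exactly the problem of Lemma~\ref{lem:neumann-layer-correction} with
\[
 r=\partial_{\nu_\sigma}E(f)|_{\Gamma_{\rm out}} .
\]
Existence of $v$ itself also follows from Lemma~\ref{lem:neumann-layer-correction}: take $v=E(f)-\eta$ with $\eta$ defined by that lemma, and uniqueness is given there. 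So the finite-layer DtN map is well defined, and
\[
 (T-T_N^\sigma)f=\partial_{\nu_\sigma}\eta|_{\Gamma_{\rm in}} .
\]

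By \eqref{eq:layer-corr-est},
\[
\|(T-T_N^\sigma)f\|_{H^{-1/2}(\Gamma_{\rm in})}\le \mathcal P(\bar\sigma)\,\|\partial_{\nu_\sigma}E(f)\|_{H^{-1/2}(\Gamma_{\rm out})},
\]
and \eqref{eq:JL-outer-conormal-summary} bounds the right-hand side by
\[
\mathcal P(\bar\sigma)\left(e^{-ck\bar\sigma}+e^{-Z(L_2+d_2)}\right)\|f\|_{H^{1/2}(\Gamma_{\rm in})}.
\]
The product of two polynomial factors is absorbed into a new $\mathcal P(\bar\sigma)$, which gives \eqref{eq:neumann-dtn-perturbation}.

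The main obstacle is the identification $\partial_{\nu_\sigma}E(f)|_{\Gamma_{\rm in}}=Tf$ together with the regularity of $E(f)$ up to the corner regions and the impedance segments. I would handle it by representing $w_f$ through the impedance half-space Green function as in \cite[Sec.~4]{JiangLi2020}, applied to a source supported inside $Q_0$. Its stretched version is then smooth in $P$ away from $\Sigma$, and it matches across $\Gamma_{\rm in}$ because the stretch is trivial there.
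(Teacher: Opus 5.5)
Your proof is correct and is essentially the paper's argument: the paper also writes the finite-layer solution as $E(f)$ plus a correction that solves the problem of Lemma~\ref{lem:neumann-layer-correction} with $r=-\partial_{\nu_\sigma}E(f)|_{\Gamma_{\rm out}}$ (your choice $\eta=E(f)-v$ only flips the sign), and then bounds $r$ by \eqref{eq:JL-outer-conormal-summary}. One small slip: with the piecewise-constant profile \eqref{eq:constant-layer-profile}, $\alpha_1$ jumps at $|x_1|=L_1$, so $A_\sigma\neq I$ on the $P$ side of $\Gamma_{\rm in}$; the identification with $Tf$ still holds because the conormal flux $\nu\cdot A_\sigma\nabla(w_f\circ F_\sigma)$ equals $\partial_\nu w_f$ there (the $\alpha_1$ factors cancel), and your weak-gluing justification is exactly what establishes this.
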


\begin{proof}
The proof is similar to the proof of Lemma~5.2 in \cite{JiangLi2020}.
The only difference is that one must now estimate the conormal residual $\partial_{\nu_\sigma}E(f)$ on $\Gamma_{\rm out}$, which is done in Lemma~\ref{lem:stretched-green-estimates}. Applying Lemma~\ref{lem:neumann-layer-correction} with $r=-\partial_{\nu_\sigma}E(f)$ completes the proof.

\end{proof}

Combining Lemmas~\ref{lem:neumann-layer-correction}, \ref{lem:stretched-green-estimates}, and \ref{lem:neumann-dtn-perturbation}, we obtain the following PML-PDE convergence theorem.

\begin{theorem}\label{thm:pml-pde}
The Neumann PML problem \eqref{eq:pmlN-pde}--\eqref{eq:pmlN-boundary} is uniquely solvable.  If \(u\) is the outgoing solution of \eqref{eq:phys-pde}--\eqref{eq:phys-src} and \(u_\sigma\) is the Neumann PML solution, then, for any compact set \(K\Subset\Omega_p\),
\begin{equation}\label{eq:pde-conv}
  \|u-u_\sigma\|_{H^1(K)}
  \le \mathcal P(\bar\sigma)\left(e^{-c_0k\bar\sigma}+e^{-Z(L_2+d_2)}\right)\|g\|_{Y_R'} .
\end{equation}
Let \(\chi_{\rm tail}\) be defined in Lemma~\ref{lem:stretched-green-estimates}. Then, there exists \(\alpha>0\), independent of $\bar\sigma$, such that
\begin{equation}\label{eq:pml-terminal-tail-trace-decay}
  \|\chi_{\rm tail}u_\sigma|_{\Gamma_R}\|_{Y_R}+
  \|\chi_{\rm tail}\partial_{\nu_\sigma}u_\sigma|_{\Gamma_R}\|_{Y_R'}
  \le C e^{-\alpha\bar\sigma}\|g\|_{Y_R'} .
\end{equation}
Moreover, if \(w\in H^1(\Omega_R)\) solves the forced Neumann PML problem
\begin{align}
  \mathcal L_\sigma w&=F &&\text{in }\Omega_R,\label{eq:forced-robin-pml-pde}\\
  \partial_{\nu_\sigma}w+M_\beta w&=r &&\text{on }\Gamma_R,\label{eq:forced-robin-pml-bc}\\
  \partial_{\nu_\sigma}w&=0 &&\text{on }\Sigma_R,\label{eq:forced-robin-pml-boundary}
\end{align}
with \(F\in H^{-1}_{0}(\Omega_R)\) and \(r\in Y_R'\), then
\begin{equation}\label{eq:forced-robin-pml-est}
  \|w\|_{H^1(\Omega_R)}+
  \|w|_{\Gamma_R}\|_{Y_R}+
  \|\partial_{\nu_\sigma}w|_{\Gamma_R}\|_{Y_R'}
  \le \mathcal P(\bar\sigma)
  \big(\|F\|_{H^{-1}_{0}(\Omega_R)}+\|r\|_{Y_R'}\big).
\end{equation}
\end{theorem}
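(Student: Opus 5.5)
The plan is to follow the standard DtN-reduction argument of \cite{JiangLi2020,ChenZheng2010}, with the Neumann layer data supplied by Lemmas~\ref{lem:neumann-layer-correction}--\ref{lem:neumann-dtn-perturbation}.

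\textbf{Reduction to $\Omega_p$.} First, I reduce both the physical problem and the Neumann PML problem to the bounded region $\Omega_p$. The outgoing solution $u$ satisfies the impedance problem in $\Omega_p$ with $\partial_{\nu}u=Tu$ on $\Gamma_{\rm in}$. Conversely, a function $u_\sigma$ solves \eqref{eq:pmlN-pde}--\eqref{eq:pmlN-boundary} if and only if its restriction to $\Omega_p$ solves the same interior problem with $T$ replaced by $T_N^\sigma$. In that case the layer part is recovered by solving in $P$ with Dirichlet data $u_\sigma|_{\Gamma_{\rm in}}$, the impedance condition on $\Sigma_{\rm pml}$, and zero conormal data on $\Gamma_{\rm out}$. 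This layer problem is well posed by Lemma~\ref{lem:neumann-layer-correction}, after lifting the Dirichlet data with $E(f)$ and correcting with $\eta$.

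\textbf{Solvability and \eqref{eq:pde-conv}.} The sesquilinear form $a_T$ on $H^1(\Omega_p)$ satisfies an inf-sup condition, by uniqueness of the outgoing problem together with a G\aa rding inequality and the Fredholm alternative. Let $a_{T_N^\sigma}$ be the corresponding form with $T$ replaced by $T_N^\sigma$. The two forms differ by $\langle (T_N^\sigma-T)\cdot,\cdot\rangle_{\Gamma_{\rm in}}$, which Lemma~\ref{lem:neumann-dtn-perturbation} bounds by $\mathcal P(\bar\sigma)(e^{-ck\bar\sigma}+e^{-Z(L_2+d_2)})$. With $d_2=\bar\sigma$, this is smaller than the inf-sup constant for $\bar\sigma$ large. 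That yields unique solvability and the bound
\[
\|u-u_\sigma\|_{H^1(\Omega_p)}\le C\|(T_N^\sigma-T)u\|_{H^{-1/2}}\le \mathcal P(\bar\sigma)\left(e^{-c_0k\bar\sigma}+e^{-Z(L_2+d_2)}\right)\|g\|_{Y_R'}.
\]
Restricting to $K\Subset\Omega_p$ gives \eqref{eq:pde-conv}.

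\textbf{Tail estimate \eqref{eq:pml-terminal-tail-trace-decay}.} In $P$ I write $u_\sigma=E(f)+\eta$ with $f=u_\sigma|_{\Gamma_{\rm in}}$ and $\|f\|_{H^{1/2}}\le C\|g\|_{Y_R'}$. The correction $\eta$ solves the problem of Lemma~\ref{lem:neumann-layer-correction} with $r=-\partial_{\nu_\sigma}E(f)$, so Lemma~\ref{lem:stretched-green-estimates} gives
\[
\|\eta\|_{H^1(P)}\le\mathcal P(\bar\sigma)\left(e^{-ck\bar\sigma}+e^{-Z(L_2+d_2)}\right)\|g\|_{Y_R'}.
\]
The traces of $\chi_{\rm tail}\eta$ on $\Gamma_R$ are controlled through trace theorems. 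For the conormal trace, I use the variational definition on a neighbourhood of the support of $\chi_{\rm tail}$, where the impedance condition holds. The $E(f)$ part is handled directly by \eqref{eq:JL-tail-summary}. The polynomial factors $\mathcal P(\bar\sigma)$ are absorbed by slightly reducing the exponent, which produces $\alpha$.

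\textbf{Forced estimate \eqref{eq:forced-robin-pml-est}.} This is the main obstacle, because the bound must be only polynomial in $\bar\sigma$ while $F$ may be supported in the PML. I would first lift $F$ there: solve $\mathcal L_\sigma w_1=F\chi_P$ in $P$ with the boundary conditions of Lemma~\ref{lem:neumann-layer-correction} and zero data on $\Gamma_{\rm out}$. The same Friedrichs-based coercivity argument in $P$ gives
\[
\|w_1\|_{H^1(P)}+\|\partial_{\nu_\sigma}w_1\|_{H^{-1/2}(\Gamma_{\rm in})}\le\mathcal P(\bar\sigma)\|F\|.
\]
Extending $w_1$ by zero, the remainder $w-w_1$ satisfies the interior problem in $\Omega_p$ with DtN operator $T_N^\sigma$. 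Its sources are $F|_{\Omega_p}$, the data $r$, and the jump term $\partial_{\nu_\sigma}w_1$ on $\Gamma_{\rm in}$. The data $r$ restricted to $\Sigma_{\rm pml}$ is moved into the layer solve in the same way. The perturbed inf-sup bound then controls $w-w_1$ in $H^1(\Omega_p)$, and Lemma~\ref{lem:neumann-layer-correction} together with $E$ extends this control into $P$. The trace bounds follow from the trace theorem and the variational conormal trace, exactly as in the proof of Lemma~\ref{lem:neumann-layer-correction}.
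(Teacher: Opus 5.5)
Your proposal is correct and follows essentially the same route as the paper. For solvability and \eqref{eq:pde-conv} you use the DtN reduction and inf-sup perturbation of \cite{JiangLi2020} via Lemma~\ref{lem:neumann-dtn-perturbation}. For \eqref{eq:forced-robin-pml-est} you lift PML-supported data by a layer solve with zero entrance data, extended by zero, and for the tail bound you split $u_\sigma=E(f)+\eta_f$, apply Lemmas~\ref{lem:neumann-layer-correction}--\ref{lem:stretched-green-estimates}, and absorb the polynomial factor; you also spell out details the paper leaves as routine, such as the jump term $\partial_{\nu_\sigma}w_1$ on $\Gamma_{\rm in}$ and the $\Sigma_{\rm pml}$ part of $r$.
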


\begin{proof}
The proof of \eqref{eq:pde-conv} and \eqref{eq:forced-robin-pml-est} for $F$ supported in $\overline{\Omega_p}$
is routine by following \cite{JiangLi2020}. If the source \(F\) is supported in the
PML layer, one first solves the corresponding layer problem with zero data on
the PML entrance, extends this layer solution by zero into the physical region
$\Omega_p$, and subtracts it.  The remaining problem has a source and boundary
data supported in $\Omega_p$; the already established Neumann PML stability then
gives \eqref{eq:forced-robin-pml-est}.  The case of a general \(F\) follows by
splitting \(F=F_{\rm phys}+F_{\rm pml}\).  We focus only on estimating the
PML tails in \eqref{eq:pml-terminal-tail-trace-decay}.  Let
\(f=u_\sigma|_{\Gamma_{\rm in}}\), let \(E(f)\) be the stretched
extension, and write \(u_\sigma=E(f)+\eta_f\) in the PML layer.
Lemma~\ref{lem:stretched-green-estimates} gives the tail decay of \(E(f)\) and
the exponentially small outer conormal residual.
Lemma~\ref{lem:neumann-layer-correction} applied to the correction \(\eta_f\)
gives the same tail bound for \(\eta_f\), up to a polynomial factor.  After
decreasing the exponent to absorb this polynomial factor,
\eqref{eq:pml-terminal-tail-trace-decay} follows.
\end{proof}

We next prove well-posedness for an auxiliary PML problem with a Dirichlet boundary condition on $\Gamma_R$ that will be used frequently later. 
\begin{lemma}
\label{lem:aux-mixed-unique}
Let \(D=\Omega_R\) or \(D=\Omega_R^-\), and put \(\Sigma_R^D:=\partial D\cap\Sigma_R\).  The mixed problem
\begin{align}
  \mathcal L_\sigma w_\phi&=F &&\text{in }D,\label{eq:aux-dir-pde}\\
  w_\phi&=\phi &&\text{on }\Gamma_R,\label{eq:aux-dir-data}\\
  \partial_{\nu_\sigma}w_\phi&=0 &&\text{on }\Sigma_R^D\label{eq:aux-dir-boundary}
\end{align}
has a unique solution for any \(\phi\in H^{1/2}(\Gamma_R)\), and
\begin{equation}\label{eq:aux-mixed-bound}
  \|w_\phi\|_{H^1(D)}+
  \|\partial_{\nu_\sigma}w_\phi|_{\Gamma_R}\|_{H^{-1/2}(\Gamma_R)}
  \le \mathcal P(\bar\sigma)\big(\|\phi\|_{H^{1/2}(\Gamma_R)}+\|F\|_{H^{-1}_{0}(D)}\big).
\end{equation}
\end{lemma}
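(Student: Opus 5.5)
We reduce to a homogeneous-trace problem by lifting the Dirichlet data and then argue variationally, in the same way as the layer problem of Lemma~\ref{lem:neumann-layer-correction}.

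\textbf{Step 1: lifting.} Choose a bounded right inverse of the trace, $\mathcal E:H^{1/2}(\Gamma_R)\to H^1(D)$, with $\mathcal E\phi|_{\Gamma_R}=\phi$ and $\|\mathcal E\phi\|_{H^1(D)}\le C\|\phi\|_{H^{1/2}(\Gamma_R)}$. Here $C$ depends only on the geometry of $D$. Since $d_2$ grows with $\bar\sigma$, we take $\mathcal E\phi$ supported in a fixed strip $\{|x_2-p(x_1)|<\delta\}$, so $C$ is independent of $\bar\sigma$. Because $\Gamma_R$ meets $\Sigma_R$ only at its two endpoints, no compatibility with the Neumann condition on $\Sigma_R^D$ is needed. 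Writing $w_\phi=\mathcal E\phi+v$, we seek $v\in V_D:=\{v\in H^1(D):v=0\text{ on }\Gamma_R\}$ with
\[
a_\sigma(v,\psi):=\int_D\big(A_\sigma\nabla v\cdot\nabla\overline\psi-k^2J_\sigma v\overline\psi\big)\,dx=-\langle F,\psi\rangle-a_\sigma(\mathcal E\phi,\psi)\quad\forall\psi\in V_D .
\]
The Neumann condition on $\Sigma_R^D$ is natural in this formulation, and the right-hand side is bounded by $\mathcal P(\bar\sigma)(\|F\|_{H^{-1}_0(D)}+\|\phi\|_{H^{1/2}})$, since $\|A_\sigma\|_\infty,\|J_\sigma\|_\infty\le C\bar\sigma^2$.

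\textbf{Step 2: inf-sup for $a_\sigma$ on $V_D$.} This is the main obstacle, because $D$ has diameter about $d_2\sim\bar\sigma$ and the form is not coercive on the physical part.

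First we prove uniqueness. Suppose $a_\sigma(v,\psi)=0$ for all $\psi\in V_D$. Then $v$ has zero Cauchy data on $\Gamma_R$, since its trace vanishes and its conormal derivative vanishes by testing. Extending $v$ by zero across $\Gamma_R$ gives an $H^1$ solution of $\mathcal L_\sigma$ on a larger connected domain that vanishes on an open set. By unique continuation for the divergence-form operator with piecewise constant coefficients, $v\equiv0$. The coefficients $A_\sigma,J_\sigma$ are piecewise constant on rectangles, and across the jump lines the transmission conditions allow the continuation to proceed subdomain by subdomain.

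Existence then follows from Fredholm theory. The form $a_\sigma+k^2(J_\sigma\cdot,\cdot)$ is coercive after multiplication by a suitable complex phase. In each PML piece, $\operatorname{Re}(e^{-\ii\theta}A_\sigma)$ is uniformly positive for a fixed $\theta$, because the arguments of $\alpha_2/\alpha_1$ and $\alpha_1/\alpha_2$ lie in $(-\pi/2,\pi/2)$. Hence $a_\sigma$ is a compact perturbation of a coercive form.

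To obtain the polynomial constant $\mathcal P(\bar\sigma)$ rather than only an abstract bound, we mimic Lemma~\ref{lem:neumann-layer-correction} and Theorem~\ref{thm:pml-pde}. We split $v$ into a part in $Q_0\cap D$, where $v=0$ on $\Gamma_R$ together with Friedrichs' inequality and compactness on the fixed domain give a $\bar\sigma$-independent constant by uniqueness, and a PML part. For the PML part, the layer inf-sup of \cite[Lemma~5.1]{JiangLi2020} yields a $\mathcal P(\bar\sigma)$ bound, with Friedrichs' inequality applied along the zero trace on $\Gamma_R\cap P$ whose width is at most $L_2+d_2$. The two parts are glued through the entrance $\Gamma_{\rm in}$ using the DtN map $T_N^\sigma$ of Lemma~\ref{lem:neumann-dtn-perturbation}, with the impedance condition replaced by a Dirichlet condition; this replacement is allowed by the final sentence of Lemma~\ref{lem:neumann-layer-correction}. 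The only delicate point is that the interior Dirichlet problem on $Q_0\cap D$ with $T_N^\sigma$ on $\Gamma_{\rm in}$ must be uniquely solvable. Its limit problem with $T$ is the Dirichlet half-space problem, which is uniquely solvable by Rellich's lemma. The limit problem is posed on the lower side $\Omega_R^-$ when $D=\Omega_R^-$; there there is no impedance line, so it is the PML truncation of a free-space exterior problem. For either choice of $D$, the perturbation estimate \eqref{eq:neumann-dtn-perturbation} then gives stability for large $\bar\sigma$.

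\textbf{Step 3: conormal trace.} As in the proof of Lemma~\ref{lem:neumann-layer-correction}, for $\psi\in H^{1/2}(\Gamma_R)$ we take an extension $V\in H^1(D)$ supported near $\Gamma_R$ and define
\[
\langle\partial_{\nu_\sigma}w_\phi,\psi\rangle=a_\sigma(w_\phi,V)+\langle F,V\rangle .
\]
This gives $\|\partial_{\nu_\sigma}w_\phi\|_{H^{-1/2}(\Gamma_R)}\le \mathcal P(\bar\sigma)\big(\|w_\phi\|_{H^1(D)}+\|F\|_{H^{-1}_0(D)}\big)$, which combined with Steps 1 and 2 yields \eqref{eq:aux-mixed-bound}.
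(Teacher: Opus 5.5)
\textbf{Uniqueness via unique continuation fails.} The second half of your Step~2 is the route the paper has in mind: reduce to the physical box through the finite-layer DtN map on $\Gamma_{\rm in}$, use the Dirichlet variant of Lemma~\ref{lem:neumann-layer-correction}, and perturb from the untruncated half-space Dirichlet problem. The paper says exactly this, that the proof mirrors Theorem~\ref{thm:pml-pde}, with uniqueness of the limit problem taken from Chandler-Wilde and Monk. The gap is the uniqueness paragraph that comes before it.

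If $a_\sigma(v,\psi)=0$ for all $\psi\in V_D$, testing yields only $\mathcal L_\sigma v=0$ in $D$ and the natural condition $\partial_{\nu_\sigma}v=0$ on $\Sigma_R^D$. Because every $\psi\in V_D$ vanishes on $\Gamma_R$, testing tells you nothing about $\partial_{\nu_\sigma}v$ on $\Gamma_R$. The homogeneous Dirichlet and Neumann conditions therefore sit on complementary parts of $\partial D$. No boundary portion carries zero Cauchy data, so unique continuation never starts.

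The argument also proves too much. It would give uniqueness for every $\bar\sigma$ and every $k$, including $\bar\sigma=0$. There the homogeneous mixed Dirichlet--Neumann problem for $\Delta+k^2$ on the bounded domain $D$ has nontrivial solutions whenever $k^2$ is one of its eigenvalues.

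\textbf{How to repair it.} Injectivity of the truncated problem is only guaranteed for $\bar\sigma$ sufficiently large, which is the standing assumption from Section~\ref{sec:conv-pmlpde} onward. It has to come out of the perturbation argument itself. Compare $T_N^\sigma$ with $T$ on $\Gamma_{\rm in}$ and invoke uniqueness of the untruncated Dirichlet problem; the resulting a priori bound gives injectivity. Only then does your Fredholm observation supply existence. So drop the unique-continuation step and reorder accordingly.

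\textbf{Two smaller corrections.}
\begin{itemize}
\item For $D=\Omega_R^-$ the untruncated problem is the Dirichlet problem in the lower perturbed half-plane $\{x_2<p(x_1)\}$, not a free-space exterior problem. Its uniqueness follows the same way, for instance by odd reflection across the flat part and Rellich's lemma.
\item The DtN perturbation bound you need is the Dirichlet analogue of \eqref{eq:neumann-dtn-perturbation}, not \eqref{eq:neumann-dtn-perturbation} itself. The latter rests on the impedance Green-function estimates of Lemma~\ref{lem:stretched-green-estimates}. The analogue is built from the one-image Dirichlet half-plane Green function, so it has no surface-wave term.
\end{itemize}
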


\begin{proof}

The proof is quite similar to that for the impedance condition 
\eqref{eq:pmlN-imp} on \(\Gamma_R\) in Theorem~\ref{thm:pml-pde}. We only emphasize that the corresponding
untruncated problem is the half-space Dirichlet problem for scattering in a locally perturbed half-space; its uniqueness follows from the result of Chandler-Wilde and Monk
\cite{ChandlerWildeMonk2005}. We omit the details.

\end{proof}

\section{\texorpdfstring{The exact Neumann PML-Green BIE}{The exact Neumann PML-Green BIE}}\label{sec:exact-green-pde}
In this section we derive the exact PML-Green BIE and establish its equivalence with the PML PDE problem \eqref{eq:pmlN-pde}--\eqref{eq:pmlN-boundary}.

\subsection{\texorpdfstring{The exact Neumann PML-Green function}{The exact Neumann PML-Green function}}\label{sec:exact-green}

The exact Neumann PML Green function \(G_{\sigma,N}^0\) is the Green function of the stretched background operator in \(\mathcal D_R\) with homogeneous conormal Neumann condition on the artificial boundary:
\begin{align}
  \mathcal L_{\sigma,x}G_{\sigma,N}^0(x,y)&=-\delta_y(x)&&\text{in }\mathcal D_R,\label{eq:GN-def}\\
  \partial_{\nu_\sigma,x}G_{\sigma,N}^0(x,y)&=0&&\text{on }\Sigma_R.\label{eq:GN-boundary}
\end{align}
 The role of $G_{\sigma,N}^0$ is to insert a genuinely well-posed exact PML-BIE between the original problem \eqref{eq:phys-pde}--\eqref{eq:phys-src} and the computable PML-BIE \eqref{eq:computable-bie2}.  In the rectangular homogeneous background used here, $G_{\sigma,N}^0$ can be constructed via an even reflected-image series.  With \(X=F_\sigma(x)\), \(Y=F_\sigma(y)\), set
\begin{equation}\label{eq:stretched-half-lengths}
  A_j:=F_{\sigma,j}(L_j+d_j)
  =\int_0^{L_j+d_j}\alpha_j(t)\,dt,
  \qquad T_j:=2A_j .
\end{equation}
Since \(\sigma_j\) is even, \(F_{\sigma,j}(-L_j-d_j)=-A_j\).  Thus the PML box is transformed into the rectangle \((-A_1,A_1)\times(-A_2,A_2)\) in stretched coordinates, and the period associated with the even extension in the \(j\)-th coordinate is \(2T_j=4A_j\).  The reflected points are
\begin{equation}\label{eq:reflected-points-corrected}
  (Y_{\ell,\delta})_j=
  \begin{cases}
  Y_j+2\ell_jT_j,&\delta_j=0,\\
  2A_j-Y_j+2\ell_jT_j,&\delta_j=1,
  \end{cases}
  \qquad \ell\in\mathbb Z^2,
  \quad \delta\in\{0,1\}^2.
\end{equation}
A direct verification shows that
\begin{equation}\label{eq:neumann-image-series}
  G_{\sigma,N}^0(x,y)=
  \sum_{\ell\in\mathbb Z^2}\sum_{\delta\in\{0,1\}^2}
  \Phi_k(X,Y_{\ell,\delta}),
\end{equation}
solves the problem~\eqref{eq:GN-def}--\eqref{eq:GN-boundary}. 
Write 
\begin{equation}\label{eq:GN-local-splitting}
  G_{\sigma,N}^0(x,y)=\Phi_\sigma(x,y)+R_{\sigma,N}(x,y),
\end{equation}
where $\Phi_\sigma(x,y)$ is the singular incident field for the term \((\ell,\delta)=(0,0)\) and \(R_{\sigma,N}\) is the regular scattering field for the remaining terms. 
This construction is the Neumann analogue of the UPML Green functions used in
\cite{LuLaiWu2024}. It remains to justify the convergence of the infinite series
in \eqref{eq:neumann-image-series}.

\begin{proposition}\label{prop:reflected-kernel-estimate}
Let a compact set \(K\Subset\Omega_R\) and let \(\overline{\Gamma_R^{\rm
int}}\Subset\Gamma_R\).  For
any \(m\ge0\), there exist constants \(C_m,\alpha_m>0\) such that
\begin{equation}
  \|R_{\sigma,N}\|_{C^m(K\times\Gamma_R)}+
  \|\partial_{\nu_\sigma,y}^*R_{\sigma,N}\|_{C^m(K\times\Gamma_R)}
  \le C_m e^{-\alpha_m\bar\sigma}.
  \label{eq:obs-kernel-small}
\end{equation}
Moreover,
\begin{equation}\label{eq:boundary-source-interior-kernel-small}
  \|R_{\sigma,N}\|_{C^m(\Gamma_R\times\Gamma_R^{\rm int})}+
  \|\partial_{\nu_\sigma,y}^*R_{\sigma,N}\|_{C^m(\Gamma_R\times\Gamma_R^{\rm int})}
  \le C_m e^{-\alpha_m\bar\sigma}.
\end{equation}
\end{proposition}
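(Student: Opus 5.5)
We estimate the image series term by term: each image contributes a Hankel function $H_0^{(1)}(k\rho)$ of a complex distance $\rho=\rho_{\ell,\delta}(X,Y)$, and we show $\operatorname{Im}\rho$ is of order $\bar\sigma$ times a positive constant. The starting point is the explicit stretched coordinates. With the profile \eqref{eq:constant-layer-profile}, $A_j=L_j+d_j+\ii\bar\sigma$. So each reflected coordinate difference $X_j-(Y_{\ell,\delta})_j$, with $(\ell,\delta)\neq(0,0)$, has the form
\[
X_j\mp Y_j-2\delta_jA_j-4\ell_jA_j .
\]
For $x\in K$ and $y\in\Gamma_R$, the imaginary parts of $X_j,Y_j$ lie in $[0,\bar\sigma]$ in absolute value. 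They vanish for $x_j$ in the physical range, and $X_1=x_1$ for $x\in K\Subset\Omega_R$ only in the sense of $K\Subset\Omega_R$ being compact, so $\operatorname{Im}X$ stays bounded away from $\pm\bar\sigma$ on $K$.

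The first step is to show that the complex square root $\rho=\big(\sum_j(X_j-(Y_{\ell,\delta})_j)^2\big)^{1/2}$, taken with $\operatorname{Im}\rho\ge0$, satisfies
\[
\operatorname{Im}\rho\ \ge\ c\,\bar\sigma(|\ell|+1)
\]
uniformly over $(\ell,\delta)\neq(0,0)$, $x\in K$, $y\in\Gamma_R$. We use the elementary fact that if $z_j=a_j+\ii b_j$ with $b_j/a_j$ of one sign, or $|b_j|\gtrsim|a_j|$, then $\operatorname{Im}\sqrt{z_1^2+z_2^2}\gtrsim\max_j|b_j|\cdot\min(1,\cdot)$. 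Concretely, the real and imaginary parts of each difference are both proportional to the same nonzero integer combination $2\delta_j+4\ell_j$ (or $4\ell_j$) times $(L_j+d_j,\bar\sigma)$, up to $O(1+\bar\sigma\theta)$ corrections. Here $\theta<1$ measures how deep $x$ can sit in the PML. This gives a fixed complex angle in the first quadrant, away from the real axis.

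For the second estimate, $x\in\Gamma_R$ is allowed up to the endpoints, but $y\in\Gamma_R^{\rm int}$ is compactly inside. The image nearest to $x$ near an endpoint is then the reflection of $y$ across $x_1=\pm(L_1+d_1)$. Its horizontal difference is $X_1+Y_1-2A_1$, whose imaginary part is at least $\bar\sigma-\operatorname{Im}Y_1\ge c\bar\sigma$. The vertical images are harmless because $\Gamma_R$ stays at height $O(1)$ while $A_2$ is large.

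Next we insert the Hankel asymptotics $|\partial^m H_0^{(1)}(k\rho)|\le C_m|\rho|^{-1/2}e^{-k\operatorname{Im}\rho}$ for $|\rho|\ge1$, $\operatorname{Im}\rho\ge0$. These hold uniformly in $\arg\rho\in[0,\pi-\epsilon]$, as used in \cite{LuLaiWu2024,JiangLi2020}. By the chain rule, $x$- and $y$-derivatives of $\Phi_k(X,Y_{\ell,\delta})$ and of the conormal derivative $\nu\cdot A_\sigma\nabla_y$ produce at most polynomial factors in $\bar\sigma$ and $d_2=\bar\sigma$: the Jacobians $\alpha_j$ are bounded by $\bar\sigma/d_j$, and $\rho^{-1}$ factors are harmless since $|\rho|\gtrsim\bar\sigma$. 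Summing over $\ell\in\mathbb Z^2$ then gives a convergent series,
\[
\sum_\ell \mathcal P(\bar\sigma)e^{-ck\bar\sigma(|\ell|+1)}\le \mathcal P(\bar\sigma)e^{-ck\bar\sigma}.
\]
This justifies the convergence of \eqref{eq:neumann-image-series} and of its derivatives. We then absorb $\mathcal P(\bar\sigma)$ by shrinking the exponent to $\alpha_m<ck$.

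The main obstacle is the uniform lower bound on $\operatorname{Im}\rho$ when a point lies inside the PML, so that $\operatorname{Im}X_j$ is of size $\bar\sigma$. There a single coordinate difference can have real part of order $d_j$ but imaginary part partially cancelled. This case occurs for $K$ touching the horizontal PML near $\Gamma_R$, and for $x\in\Gamma_R$ near its endpoints in the second estimate. I would handle it first by case analysis on which coordinate achieves the image shift, using compactness ($\operatorname{dist}(K,\Sigma_R)>0$, respectively $\operatorname{dist}(\Gamma_R^{\rm int},\Sigma_R)>0$) to keep a fixed fraction $\theta\bar\sigma$ of the imaginary shift. I would also check that the principal branch never approaches the negative real axis, since both real and imaginary parts of the dominant difference share sign.
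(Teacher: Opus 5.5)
Your proposal is correct and follows essentially the paper's route: you use image-by-image large-argument Hankel decay $e^{-k\operatorname{Im}\rho}$, driven by a lower bound of order $\bar\sigma$ on the imaginary part of the complex image distance (which the paper only asserts by saying each reflected path crosses a positive amount of PML), absorb the chain-rule factors $\alpha_j$ as polynomial losses, and sum over $\ell$; your explicit $A_j=L_j+d_j+\ii\bar\sigma$ and the case split by which coordinate carries the image shift make this more concrete than the paper. Two side remarks in that step need correcting, though neither affects the conclusion: the tangential differences $z_1\approx-(2\delta_1+4\ell_1)(L_1+d_1+\ii\bar\sigma)$ are nearly purely imaginary, so $\rho^2$ \emph{does} approach the negative real axis (harmless, since $\operatorname{Im}\rho^2\ge0$ by monotonicity of the evenly extended stretching, and there $\operatorname{Im}\rho\approx|\rho|$), and your general ``elementary fact'' with $\max_j|b_j|$ is false (e.g.\ $z_1=1+\ii\bar\sigma$, $z_2=\bar\sigma$ gives $\operatorname{Im}\rho\sim\sqrt{\bar\sigma}$), so it holds here only because $z_2$ is either real and $O(1)$ or, for vertical images with $d_2=\bar\sigma$, has real and imaginary parts both of size $\bar\sigma$.
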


\begin{proof}
We use the large-argument part of the Hankel estimates from
\cite{ChenLiu2005,ChenZheng2010}.  More precisely, for any fixed
\(\rho_0>0\),
\begin{equation}
  |\partial_z^\mu H_0^{(1)}(kz)|
  \le C_{\mu,\rho_0} (1+|kz|)^{-1/2}|z|^{-\mu}e^{-\Im(kz)},
  \qquad |z|\ge \rho_0,
  \quad \mu=0,1,\ldots,m+1.\label{eq:Greendecay}
\end{equation}
The restriction \(|z|\ge\rho_0\) is available for
all reflected terms.  Indeed, either the observation set is a fixed compact set
\(K\Subset\Omega_R\), or the source variable is restricted to
\(\Gamma_R^{\rm int}\Subset\Gamma_R\); in both cases each reflected image point
stays a positive Euclidean distance from the target set.  In the reflected series
\eqref{eq:neumann-image-series}, each nontrivial reflected path then
passes through a positive amount of PML layer before returning to the
observation region.  Hence the reflected series and its derivatives admit a
summable majorant of size \(e^{-c\bar\sigma}\), after absorbing algebraic
PML factors into the constant and decreasing the exponent if necessary.
\end{proof}

We remark that if both variables approach $\partial G_R$, a reflected source can
approach the physical region $\Omega_p$. Therefore, the above argument does not
yield an exponentially small operator-norm estimate on all of
\(\Gamma_R\times\Gamma_R\).

\subsection{\texorpdfstring{The exact PML-Green BIE formulation}{The exact PML-Green BIE formulation}}\label{sec:exact-bie}

For \(q\in Y_R'\) and \(\phi\in Y_R\), by analogy with the computable layer
potentials $\mathrm{SL}_{\sigma}^c$ and $\mathrm{DL}_{\sigma}^c$, we define the
exact PML-Green layer potentials $\mathrm{SL}_{\sigma,N}^0$ and $\mathrm{DL}_{\sigma,N}^0$
by replacing $\Phi_\sigma$ by $G_{\sigma,N}^0$.  Their boundary operators
$\mathcal S_{\sigma,N}^0$ and $\mathcal K_{\sigma,N}^0$ are defined analogously
by \eqref{eq:V-tilde-intro} and \eqref{eq:K-tilde-intro}, with $G_{\sigma,N}^0$
as the new kernel.  They are well defined using the standard Sobolev spaces on
open arcs, and are bounded
as follows:
\begin{equation}
  \mathcal S_{\sigma,N}^0:Y_R'\to Y_R,
  \qquad
  \mathcal K_{\sigma,N}^0:Y_R\to Y_R;
\end{equation}
see \cite[Chs.~6--8]{McLean2000} for details and for the corresponding jump relations.

In the following, the plus trace $\gamma_+$ is taken from \(\Omega_R\), and the minus trace $\gamma_-$ from the lower component \(\Omega_R^-\).  The conormal direction is still the same geometric normal \(\nu\), as shown in Figure~\ref{fig:geometry}.
Let \(u_\sigma\) solve \eqref{eq:pmlN-pde}--\eqref{eq:pmlN-boundary}.  Set
\begin{equation}
  \phi_\sigma=u_\sigma|_{\Gamma_R},
  \qquad
  q_\sigma=\partial_{\nu_\sigma}u_\sigma|_{\Gamma_R}.
\end{equation}
For \(x\in\Omega_R\), Green's representation theorem gives
\begin{align}
  u_\sigma(x)
  &=(\mathrm{SL}_{\sigma,N}^0 q_\sigma)(x)  - (\mathrm{DL}_{\sigma,N}^0 \phi_\sigma)(x),
    \label{eq:green-rep-pde}
\end{align}
where we have used the fact that \(\partial_{\nu_\sigma}u_\sigma=0\) and \(\partial_{\nu_\sigma,y}^{*}G_{\sigma,N}^0=0\) on \(\Sigma_R\). Letting $x$ approach $\Gamma_R$ yields
\begin{equation}\label{eq:bie-q}
  \left(\frac12I+\mathcal K_{\sigma,N}^0\right)\phi_\sigma
  =\mathcal S_{\sigma,N}^0q_\sigma.
\end{equation}
Since the impedance boundary condition is
\begin{equation}
  q_\sigma+M_\beta\phi_\sigma=g,
\end{equation}
we obtain the exact PML-Green BIE
\begin{equation}
  \mathcal A_{\sigma,N}^0\phi_\sigma=\mathcal S_{\sigma,N}^0g,
  \qquad
  \mathcal A_{\sigma,N}^0=\frac12I+\mathcal K_{\sigma,N}^0+\mathcal S_{\sigma,N}^0M_\beta:Y_R\to Y_R .
  \label{eq:exact-bie}
\end{equation}

\subsection{\texorpdfstring{Equivalence of the exact PML-Green BIE and the PML-PDE}{Equivalence of the exact PML-Green BIE and the PML-PDE}}

For \(\phi\in Y_R\), let \(w_\phi\) be the auxiliary mixed solution from Lemma~\ref{lem:aux-mixed-unique}, and define the Neumann PML Dirichlet-to-Neumann (DtN) map
\begin{equation}\label{eq:LambdaN-def}
  \Lambda_{\sigma,N}^0\phi:=\partial_{\nu_\sigma}w_\phi|_{\Gamma_R}\in Y_R'.
\end{equation}
Applying \eqref{eq:green-rep-pde} to \(w_\phi\), with \(q=\Lambda_{\sigma,N}^0\phi\), gives
\begin{equation}\label{eq:wphi-rep-N}
  w_\phi=\mathrm{SL}_{\sigma,N}^0\Lambda_{\sigma,N}^0\phi
  -\mathrm{DL}_{\sigma,N}^0\phi .
\end{equation}
Taking the plus trace and using \(\gamma_+w_\phi=\phi\) gives
\begin{equation}\label{eq:factor1-N}
  \left(\frac12I+\mathcal K_{\sigma,N}^0\right)\phi
  =\mathcal S_{\sigma,N}^0\Lambda_{\sigma,N}^0\phi .
\end{equation}
Consequently,
\begin{equation}\label{eq:factor-A-N}
  \mathcal A_{\sigma,N}^0=\mathcal S_{\sigma,N}^0(\Lambda_{\sigma,N}^0+M_\beta).
\end{equation}
We first show the injectivity of ${\cal S}_{\sigma,N}^0$. 
\begin{lemma}\label{lem:S-exact-injective}
The operator \(\mathcal S_{\sigma,N}^0:Y_R'\to Y_R\) is injective.
\end{lemma}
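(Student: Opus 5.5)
Suppose \(q\in Y_R'\) satisfies \(\mathcal S_{\sigma,N}^0q=0\). We form the single-layer potential \(v:=\mathrm{SL}_{\sigma,N}^0q\) on all of \(\mathcal D_R\setminus\Gamma_R\), that is, on both \(\Omega_R\) and \(\Omega_R^-\). By construction of \(G_{\sigma,N}^0\) in \eqref{eq:GN-def}--\eqref{eq:GN-boundary}, \(v\) satisfies \(\mathcal L_\sigma v=0\) in \(\Omega_R\cup\Omega_R^-\) and \(\partial_{\nu_\sigma}v=0\) on \(\Sigma_R\). The standard jump relations for single layers on open arcs (\cite[Chs.~6--8]{McLean2000}), which are unaffected by the smooth remainder \(R_{\sigma,N}\), give
\[
  \gamma_+v=\gamma_-v=\mathcal S_{\sigma,N}^0q=0,\qquad
  \partial_{\nu_\sigma}^-v-\partial_{\nu_\sigma}^+v=q \quad\text{on }\Gamma_R,
\]
with the sign fixed by the orientation of \(\nu\). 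The point is that, with zero Dirichlet data, the trace spaces agree across the arc.

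The first main step is to show that \(v\) vanishes on each side. On \(D=\Omega_R\), the function \(v\) solves the mixed problem \eqref{eq:aux-dir-pde}--\eqref{eq:aux-dir-boundary} with \(F=0\) and \(\phi=0\). On \(D=\Omega_R^-\), it solves the same problem posed in the lower component. Lemma~\ref{lem:aux-mixed-unique} covers both choices of \(D\), so uniqueness gives \(v\equiv0\) in \(\Omega_R\) and in \(\Omega_R^-\). Both conormal traces therefore vanish, and the jump relation yields \(q=0\).

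The main obstacle is justifying the jump relations and the membership \(v\in H^1\) on each component near the endpoints of \(\Gamma_R\), where \(\Gamma_R\) meets \(\Sigma_R\). There the images in \(R_{\sigma,N}\) approach the arc, so \(R_{\sigma,N}\) is not smooth uniformly on \(\Gamma_R\times\Gamma_R\). I would handle this by an even reflection across the sides of \(\mathcal D_R\), which is legitimate because of the image construction \eqref{eq:neumann-image-series}. Near each endpoint, the reflection turns \(G_{\sigma,N}^0\) into the free stretched kernel plus a finite number of mirror terms. The mirror terms correspond to the single layer on the reflected arc, a curve that continues \(\Gamma_R\) across \(\Sigma_R\) with \(q\) extended evenly. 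Because \(q\in H^{-1/2}_{00}(\Gamma_R)\), its zero-free even extension is again an \(H^{-1/2}\) density on the extended arc, which lies in the interior of the reflected domain. The classical mapping properties and jump relations then apply locally, so \(v|_{\Omega_R}\in H^1(\Omega_R)\) and \(v|_{\Omega_R^-}\in H^1(\Omega_R^-)\). With this regularity in hand, the appeal to Lemma~\ref{lem:aux-mixed-unique} is valid and the argument closes.
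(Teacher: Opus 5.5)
Your proof is correct and follows the paper's argument: the single layer $v=\mathrm{SL}_{\sigma,N}^0q$ has zero Dirichlet trace on both sides of $\Gamma_R$ and homogeneous conormal data on $\Sigma_R$, so Lemma~\ref{lem:aux-mixed-unique} with $D=\Omega_R$ and $D=\Omega_R^-$ forces $v^\pm=0$, and the conormal jump then gives $q=0$. Your even-reflection argument for $H^1$ regularity and the jump relations near the endpoints of $\Gamma_R$ is extra justification that the paper leaves to its citation of McLean. Your jump sign is opposite to the paper's $q=\partial_{\nu_\sigma,+}v-\partial_{\nu_\sigma,-}v$, but this does not matter since both traces vanish.
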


\begin{proof}
Let \(q\in Y_R'\) satisfy \(\mathcal S_{\sigma,N}^0q=0\), and set
  $v=\mathrm{SL}_{\sigma,N}^0q$.
Then \(v\) satisfies \(\mathcal L_\sigma v=0\) in \(\mathcal D_R\setminus\overline{\Gamma_R}\).  Since the exact PML-Green kernel has homogeneous conormal Neumann trace on \(\Sigma_R\),
\begin{equation}
  \partial_{\nu_\sigma}v=0\qquad\text{on }\Sigma_R.
\end{equation}
The single-layer potential is continuous across \(\Gamma_R\), and its common trace is \(\mathcal S_{\sigma,N}^0q=0\).  Therefore the restrictions \(v^+\) and \(v^-\) to \(\Omega_R\) and \(\Omega_R^-\) are the zero-trace auxiliary mixed solutions in Lemma~\ref{lem:aux-mixed-unique}.  Hence
\begin{equation}
  v^+=v^-=0.
\end{equation}
Their conormal traces vanish on \(\Gamma_R\).  The jump relation gives
\begin{equation}
  q=\partial_{\nu_\sigma,+}v-
  \partial_{\nu_\sigma,-}v=0.
\end{equation}
\end{proof}

We now prove the equivalence between the PML problem \eqref{eq:pmlN-pde}--\eqref{eq:pmlN-boundary} and the exact PML-Green BIE \eqref{eq:exact-bie}. 
\begin{theorem}\label{thm:equiv}
The function \(\phi_\sigma\in Y_R\) solves the exact PML-Green BIE \eqref{eq:exact-bie} if and only if \(\phi_\sigma=u_\sigma|_{\Gamma_R}\), where \(u_\sigma\) is the solution of the Neumann PML problem \eqref{eq:pmlN-pde}--\eqref{eq:pmlN-boundary}. Therefore,
\begin{equation}
  u_\sigma=\mathrm{SL}_{\sigma,N}^0(g-M_\beta\phi_\sigma)-\mathrm{DL}_{\sigma,N}^0\phi_\sigma .
  \label{eq:exact-reconstruction}
\end{equation}
\end{theorem}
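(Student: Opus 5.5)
The plan is to use the factorization \eqref{eq:factor-A-N}, $\mathcal A_{\sigma,N}^0=\mathcal S_{\sigma,N}^0(\Lambda_{\sigma,N}^0+M_\beta)$, together with the injectivity of $\mathcal S_{\sigma,N}^0$ from Lemma~\ref{lem:S-exact-injective}. This reduces the BIE to an equivalent boundary equation on $\Gamma_R$.

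Suppose $\phi\in Y_R$ solves \eqref{eq:exact-bie}. By \eqref{eq:factor-A-N},
\[
\mathcal S_{\sigma,N}^0\bigl[(\Lambda_{\sigma,N}^0+M_\beta)\phi-g\bigr]=0 .
\]
The bracket lies in $Y_R'$, because $\Lambda_{\sigma,N}^0:Y_R\to Y_R'$ by \eqref{eq:aux-mixed-bound}, $M_\beta:Y_R\to Y_R'$, and $g\in Y_R'$. Lemma~\ref{lem:S-exact-injective} then gives
\[
\Lambda_{\sigma,N}^0\phi+M_\beta\phi=g .
\]
Now let $w_\phi$ be the mixed solution of Lemma~\ref{lem:aux-mixed-unique} with $D=\Omega_R$ and $F=0$. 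It satisfies $\mathcal L_\sigma w_\phi=0$ in $\Omega_R$ and the Neumann condition on $\Sigma_R\cap\partial\Omega_R$. Its conormal trace is $\Lambda_{\sigma,N}^0\phi$, so the identity above says exactly that $\partial_{\nu_\sigma}w_\phi+M_\beta w_\phi=g$ on $\Gamma_R$. Hence $w_\phi$ solves \eqref{eq:pmlN-pde}--\eqref{eq:pmlN-boundary}. By the uniqueness part of Theorem~\ref{thm:pml-pde}, $w_\phi=u_\sigma$, and so $\phi=u_\sigma|_{\Gamma_R}$.

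Conversely, if $\phi_\sigma=u_\sigma|_{\Gamma_R}$, the derivation leading to \eqref{eq:exact-bie} in Section~\ref{sec:exact-bie} shows that $\phi_\sigma$ solves the BIE. Equivalently, $u_\sigma=w_{\phi_\sigma}$ by the uniqueness in Lemma~\ref{lem:aux-mixed-unique}, so $q_\sigma=\Lambda_{\sigma,N}^0\phi_\sigma=g-M_\beta\phi_\sigma$, and applying \eqref{eq:factor-A-N} gives $\mathcal A_{\sigma,N}^0\phi_\sigma=\mathcal S_{\sigma,N}^0g$. This also shows that the BIE has exactly one solution. The reconstruction formula \eqref{eq:exact-reconstruction} follows by substituting $q_\sigma=g-M_\beta\phi_\sigma$ into the representation \eqref{eq:green-rep-pde}.

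The main point requiring care is the validity of Green's representation \eqref{eq:green-rep-pde} for $H^1$ functions with the non-self-adjoint matrix $A_\sigma$. The kernel must be used in the adjoint variable: the representation needs $\mathcal L_{\sigma,y}^{*}G_{\sigma,N}^0(x,\cdot)=-\delta_x$ together with a vanishing adjoint conormal derivative on $\Sigma_R$. Since $A_\sigma$ is diagonal, it is symmetric (though complex), so $\mathcal L_\sigma$ is formally symmetric in the bilinear, non-sesquilinear, sense. I would verify from the image series \eqref{eq:neumann-image-series} that $G_{\sigma,N}^0(x,y)=G_{\sigma,N}^0(y,x)$; reciprocity then supplies the required adjoint properties. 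The representation itself is justified for $u_\sigma$ with $\mathcal L_\sigma u_\sigma=0$ by a standard density and cut-off argument around $x$, as in \cite[Ch.~6]{McLean2000}, which is what legitimizes \eqref{eq:factor1-N}.
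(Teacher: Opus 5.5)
Your proposal is correct and follows the paper's proof essentially step for step: the factorization \eqref{eq:factor-A-N} together with the injectivity of $\mathcal S_{\sigma,N}^0$ (Lemma~\ref{lem:S-exact-injective}) gives the Robin identity in $Y_R'$, uniqueness of the Neumann PML problem identifies $w_{\phi_\sigma}$ with $u_\sigma$, and substituting $\Lambda_{\sigma,N}^0\phi_\sigma=g-M_\beta\phi_\sigma$ into the Green representation yields the reconstruction formula. Your extra remark on reciprocity $G_{\sigma,N}^0(x,y)=G_{\sigma,N}^0(y,x)$ is correct, since the image series is symmetric after $\ell\mapsto-\ell$, and it usefully makes explicit a step the paper takes for granted: the adjoint conormal derivative $\partial^*_{\nu_\sigma,y}G_{\sigma,N}^0$ vanishes on $\Sigma_R$ and the equation holds in the $y$ variable.
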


\begin{proof}
The implication from the PDE to the BIE was derived in \eqref{eq:green-rep-pde}--\eqref{eq:exact-bie}.  Conversely, suppose that \(\phi_\sigma\in Y_R\) solves \eqref{eq:exact-bie}.  Using the factorization \eqref{eq:factor-A-N},
\begin{equation}
  \mathcal S_{\sigma,N}^0(\Lambda_{\sigma,N}^0+M_\beta)\phi_\sigma
  =\mathcal S_{\sigma,N}^0g .
\end{equation}
Hence
\begin{equation}
  \mathcal S_{\sigma,N}^0\left((\Lambda_{\sigma,N}^0+M_\beta)\phi_\sigma-g\right)=0 .
\end{equation}
By Lemma~\ref{lem:S-exact-injective},
\begin{equation}\label{eq:LambdaN-Robin}
  (\Lambda_{\sigma,N}^0+M_\beta)\phi_\sigma=g
  \qquad\text{in }Y_R'.
\end{equation}
Let \(w_{\phi_\sigma}\) be the auxiliary mixed solution with trace
\(\phi=\phi_\sigma\).  By the definition of \(\Lambda_{\sigma,N}^0\),
\eqref{eq:LambdaN-Robin} is exactly
\begin{equation}
  \partial_{\nu_\sigma}w_{\phi_\sigma}-\ii k\beta w_{\phi_\sigma}=g
  \qquad\text{on }\Gamma_R.
\end{equation}
Together with \(\mathcal L_\sigma w_{\phi_\sigma}=0\) in \(\Omega_R\) and \(\partial_{\nu_\sigma}w_{\phi_\sigma}=0\) on \(\Sigma_R\), this is the Neumann PML problem.  By uniqueness,
\begin{equation}
  w_{\phi_\sigma}=u_\sigma.
\end{equation}
Finally, \eqref{eq:LambdaN-Robin} gives \(g-M_\beta\phi_\sigma=\Lambda_{\sigma,N}^0\phi_\sigma\).  Substituting this into \eqref{eq:wphi-rep-N} yields the reconstruction formula \eqref{eq:exact-reconstruction}.  The impedance conormal trace is recovered from the auxiliary DtN factorization.
\end{proof}

\subsection{\texorpdfstring{Well-posedness and convergence theory}{Well-posedness and convergence theory}}\label{sec:exact-convergence}

The PML-PDE convergence theorem was established in
Section~\ref{sec:conv-pmlpde}.  We now transfer that result to the exact
PML-Green BIE through the equivalence theorem.

\begin{theorem}\label{thm:exact-conv}
The exact PML-Green BIE \eqref{eq:exact-bie} has a unique solution \(\phi_\sigma\in Y_R\).  Its reconstructed field via \eqref{eq:exact-reconstruction} equals the Neumann PML solution $u_\sigma$.  Consequently, for any compact set \(K\Subset\Omega_p\),
\begin{equation}
  \|u-u_\sigma\|_{H^1(K)}
  \le \mathcal P(\bar\sigma)\left(e^{-c_0k\bar\sigma}+e^{-Z(L_2+d_2)}\right)\|g\|_{Y_R'}.
\end{equation}
\end{theorem}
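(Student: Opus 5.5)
The theorem is mostly a matter of combining results already established. Existence, uniqueness, reconstruction, and the error bound each come from a specific earlier statement, so the plan is to assemble them in order.

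\emph{Existence.} Theorem~\ref{thm:pml-pde} gives a unique Neumann PML solution $u_\sigma\in H^1(\Omega_R)$. Its trace $\phi_\sigma:=u_\sigma|_{\Gamma_R}$ lies in $Y_R$ by the trace bound in \eqref{eq:forced-robin-pml-est}, applied with $F=0$ and $r=g$. The forward direction of Theorem~\ref{thm:equiv}, that is, the derivation \eqref{eq:green-rep-pde}--\eqref{eq:exact-bie}, shows that $\phi_\sigma$ solves $\mathcal A_{\sigma,N}^0\phi_\sigma=\mathcal S_{\sigma,N}^0 g$.

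\emph{Uniqueness.} By linearity, it suffices to show that $\mathcal A_{\sigma,N}^0\psi=0$ with $\psi\in Y_R$ forces $\psi=0$. By the converse direction of Theorem~\ref{thm:equiv} applied with $g=0$, the factorization \eqref{eq:factor-A-N} and the injectivity of $\mathcal S_{\sigma,N}^0$ (Lemma~\ref{lem:S-exact-injective}) give $(\Lambda_{\sigma,N}^0+M_\beta)\psi=0$. Hence the auxiliary mixed solution $w_\psi$ of Lemma~\ref{lem:aux-mixed-unique} solves the homogeneous Neumann PML problem \eqref{eq:pmlN-pde}--\eqref{eq:pmlN-boundary}. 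The uniqueness part of Theorem~\ref{thm:pml-pde} then gives $w_\psi=0$, and therefore $\psi=\gamma_+w_\psi=0$.

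\emph{Reconstruction and error bound.} With $\phi_\sigma$ the unique solution, \eqref{eq:exact-reconstruction} of Theorem~\ref{thm:equiv} identifies the reconstructed field with $u_\sigma$ in $\Omega_R$. The estimate on compact sets $K\Subset\Omega_p$ is then exactly \eqref{eq:pde-conv}.

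The only delicate point is bookkeeping in the uniqueness step. One must make sure that the kernel element $\psi$ is handled in $Y_R$ and not in an $H^{1/2}_{00}$-type space, so that Lemma~\ref{lem:aux-mixed-unique} applies. One must also check that the identity $(\Lambda_{\sigma,N}^0+M_\beta)\psi=0$ holds in $Y_R'$, which is the space on which $\mathcal S_{\sigma,N}^0$ was shown injective. Both requirements are built into the definitions of $Y_R$ and $Y_R'$, so I expect no genuine obstacle. A stability bound $\|\phi_\sigma\|_{Y_R}\le\mathcal P(\bar\sigma)\|g\|_{Y_R'}$ also follows, and I would record it for later use.
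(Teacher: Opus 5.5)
Your proposal is correct and follows essentially the same route as the paper. Existence and uniqueness both come from the equivalence in Theorem~\ref{thm:equiv}; your uniqueness step with $g=0$ simply unpacks its converse direction (factorization \eqref{eq:factor-A-N}, injectivity of $\mathcal S_{\sigma,N}^0$, uniqueness of the Neumann PML problem), and the error bound is \eqref{eq:pde-conv} applied to the reconstructed field.
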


\begin{proof}
By Theorem~\ref{thm:equiv}, solving the exact PML-Green BIE \eqref{eq:exact-bie} is equivalent to solving the Neumann PML problem \eqref{eq:pmlN-pde}--\eqref{eq:pmlN-boundary}.  The latter is uniquely solvable by Theorem~\ref{thm:pml-pde}.  The reconstruction identity is \eqref{eq:exact-reconstruction}, and the error estimate is exactly \eqref{eq:pde-conv} applied to this reconstructed field.
\end{proof}

To conclude this section, we present two corollaries showing the polynomial stability of the exact PML-Green BIE~\eqref{eq:exact-bie}.
\begin{corollary}\label{cor:exact-bie-stability}
The operator
\(\mathcal A_{\sigma,N}^0:Y_R\to Y_R\) satisfies
\begin{equation}\label{eq:exact-bie-stability}
  \|\phi\|_{Y_R}\le \mathcal P(\bar\sigma)
  \|\mathcal A_{\sigma,N}^0\phi\|_{Y_R},
  \qquad \phi\in Y_R .
\end{equation}
Moreover, 
\begin{equation}\label{eq:exact-bie-stability-rho-unweighted}
  \|\phi\|_{Y_R}+\|\Lambda_{\sigma,N}^0\phi\|_{Y_R'}
  \le \mathcal P(\bar\sigma)
  \|\mathcal A_{\sigma,N}^0\phi\|_{Y_R},
  \qquad \phi\in Y_R .
\end{equation}
\end{corollary}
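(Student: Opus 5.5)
The estimate can be obtained by inverting $\mathcal A_{\sigma,N}^0$ explicitly through a forced Neumann PML problem, so that Theorem~\ref{thm:pml-pde} supplies the polynomial bound. Fix $\phi\in Y_R$ and set $f:=\mathcal A_{\sigma,N}^0\phi\in Y_R$. By the factorization \eqref{eq:factor-A-N}, $f=\mathcal S_{\sigma,N}^0 h$ with $h:=(\Lambda_{\sigma,N}^0+M_\beta)\phi\in Y_R'$. Since $\phi$ is arbitrary in $Y_R$, the data $f$ need not lie in the range of $\mathcal S_{\sigma,N}^0$ applied to a well-controlled $h$. We therefore do not try to invert $\mathcal S_{\sigma,N}^0$ directly. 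Instead we build a field whose boundary data on $\Gamma_R$ is controlled by $f$ and compare it with $w_\phi$.

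First, let $v:=w_\phi$ be the auxiliary mixed solution of Lemma~\ref{lem:aux-mixed-unique} in $\Omega_R$, with trace $\phi$. By the representation \eqref{eq:wphi-rep-N},
\[
  \mathrm{SL}_{\sigma,N}^0 h-\mathrm{DL}_{\sigma,N}^0\phi
  =w_\phi+\mathrm{SL}_{\sigma,N}^0 M_\beta\phi .
\]
Now define on $\mathcal D_R\setminus\overline{\Gamma_R}$ the potential $U:=\mathrm{SL}_{\sigma,N}^0 h-\mathrm{DL}_{\sigma,N}^0\phi$. It satisfies $\mathcal L_\sigma U=0$ on both sides and has zero conormal derivative on $\Sigma_R$. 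By the jump relations, $[U]=\phi$ and $[\partial_{\nu_\sigma}U]=h$. The plus trace is $\gamma_+U=\tfrac12\phi+\mathcal S h-\mathcal K\phi$, which equals $\phi$ by \eqref{eq:factor1-N} together with the identity $\mathcal A\phi=\mathcal S h$. Hence the minus trace is $\gamma_-U=\gamma_+U-\phi=0$ up to the $f$-dependence. More precisely, we rewrite the identity as $\gamma_-U=\mathcal S_{\sigma,N}^0h-(\tfrac12 I+\mathcal K_{\sigma,N}^0)\phi=f-\phi$.

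The key step is then this. In $\Omega_R^-$, $U^-$ solves the mixed problem of Lemma~\ref{lem:aux-mixed-unique} with Dirichlet data $f-\phi$. On the plus side, $U^+$ satisfies $\partial_{\nu_\sigma}U^+ + M_\beta\gamma_+U^+ = \partial_{\nu_\sigma}U^- + h + M_\beta\gamma_+U^+$. Using $\gamma_+U^+=\gamma_-U^-+\phi$, we express everything through $U^-$ and $f$. This gives a coupled transmission problem across $\Gamma_R$ whose data is $f$ alone, and whose unique solvability (for $f=0$ this is the injectivity argument of Lemma~\ref{lem:S-exact-injective} combined with uniqueness in Theorem~\ref{thm:equiv}) yields the bound by combining \eqref{eq:aux-mixed-bound} and \eqref{eq:forced-robin-pml-est}.

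Concretely, I would decouple the problem as follows.
\begin{enumerate}
  \item Solve for $U^-$ by Lemma~\ref{lem:aux-mixed-unique} with data $f$. Its Dirichlet-to-Neumann structure shows $U^-=w^-_{f}-w^-_{\phi}$, so $\partial_{\nu_\sigma}U^-=\Lambda^-(f-\phi)$, where $\Lambda^-$ is the lower DtN map.
  \item Note that the plus side is simply $w_\phi$, whose impedance data is $h=\Lambda_{\sigma,N}^0\phi+M_\beta\phi$.
\end{enumerate}
The equation then collapses to $(\Lambda^-+\Lambda_{\sigma,N}^0)\phi=\Lambda^- f$, up to sign conventions from the jump. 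So the main task is to show that $\Lambda^-+\Lambda_{\sigma,N}^0:Y_R\to Y_R'$ is invertible with inverse bounded by $\mathcal P(\bar\sigma)$. This operator is the DtN operator of the transmission problem on $\mathcal D_R$ with a source $\psi\in Y_R'$ placed on $\Gamma_R$ as a jump of conormal derivative. That problem is a forced Neumann PML problem on the whole box with $F=\psi\,\delta_{\Gamma_R}\in H^{-1}_0(\mathcal D_R)$. It is uniquely solvable by the same argument as Theorem~\ref{thm:pml-pde}, with the full-space (no obstacle) background, and it obeys the polynomial bound \eqref{eq:forced-robin-pml-est}. Its trace gives $\|\phi\|_{Y_R}\le\mathcal P(\bar\sigma)\|\psi\|_{Y_R'}$. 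Since $\|\Lambda^- f\|_{Y_R'}\le\mathcal P(\bar\sigma)\|f\|_{Y_R}$ by \eqref{eq:aux-mixed-bound}, this yields \eqref{eq:exact-bie-stability}. The second estimate \eqref{eq:exact-bie-stability-rho-unweighted} then follows from $\|\Lambda_{\sigma,N}^0\phi\|_{Y_R'}\le\mathcal P(\bar\sigma)\|\phi\|_{Y_R}$, again by \eqref{eq:aux-mixed-bound}.

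The main obstacle is the polynomial-in-$\bar\sigma$ well-posedness of the whole-box transmission problem defining $(\Lambda^-+\Lambda_{\sigma,N}^0)^{-1}$. Its untruncated counterpart is the free-space problem across the arc $\Gamma_R$ extended to infinity. Uniqueness for that counterpart is immediate from the free-space Helmholtz equation. Still, the PML convergence argument of Section~\ref{sec:conv-pmlpde} must be rerun with lower-half-plane PML layers. I would handle this by the same layer-correction Lemma~\ref{lem:neumann-layer-correction} and DtN perturbation Lemma~\ref{lem:neumann-dtn-perturbation}, now with the free-space DtN map replacing the impedance one.
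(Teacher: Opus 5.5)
Your trace bookkeeping for $U=\mathrm{SL}_{\sigma,N}^0h-\mathrm{DL}_{\sigma,N}^0\phi$ is wrong, and so the reduced equation you reach is false. Your own first display gives $U|_{\Omega_R}=w_\phi+\mathrm{SL}_{\sigma,N}^0M_\beta\phi$. Hence $\gamma_+U=\phi+\mathcal S_{\sigma,N}^0M_\beta\phi$, not $\phi$, and the upper field is not ``simply $w_\phi$''. Likewise $\gamma_-U=f-(\tfrac12I+\mathcal K_{\sigma,N}^0)\phi=\mathcal S_{\sigma,N}^0M_\beta\phi$, not $f-\phi$. Indeed, $\gamma_+U=\phi$, $\gamma_-U=f-\phi$ and $\gamma_-U-\gamma_+U=-\phi$ together would force $f=\phi$. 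As a result, the ``collapsed'' relation $(\Lambda^-+\Lambda_{\sigma,N}^0)\phi=\Lambda^-f$ contains no $M_\beta$, although $\mathcal A_{\sigma,N}^0$ depends on $\beta$, so it cannot be an identity. The operator you then propose to invert is the conormal-jump operator of the impedance-free transmission problem across the interior arc $\Gamma_R$ in $\mathcal D_R$. Up to your sign convention it is just $(\mathcal S_{\sigma,N}^0)^{-1}$, since $\mathrm{SL}_{\sigma,N}^0\psi$ solves that problem. Inverting it does not connect $\phi$ with $\mathcal A_{\sigma,N}^0\phi$, so the whole-box PML stability you single out as the main obstacle is beside the point.

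The missing idea is that $\mathcal S_{\sigma,N}^0$ has a polynomial lower bound on all of $Y_R'$, and this comes for free from the two one-sided mixed problems. So your worry that $f$ ``need not be $\mathcal S_{\sigma,N}^0$ of a well-controlled $h$'' is unfounded. Take $r\in Y_R'$ and $v=\mathrm{SL}_{\sigma,N}^0r$. By uniqueness in Lemma~\ref{lem:aux-mixed-unique}, the restrictions of $v$ to $\Omega_R$ and $\Omega_R^-$ are the mixed solutions with common trace $\mathcal S_{\sigma,N}^0r$. Let $\Lambda^-$ be the lower DtN map, taken along $\nu$. The conormal jump then gives $r=(\Lambda_{\sigma,N}^0-\Lambda^-)\mathcal S_{\sigma,N}^0r$, and \eqref{eq:aux-mixed-bound} on both sides yields $\|r\|_{Y_R'}\le\mathcal P(\bar\sigma)\|\mathcal S_{\sigma,N}^0r\|_{Y_R}$. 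Now set $r:=(\Lambda_{\sigma,N}^0+M_\beta)\phi$, so that the factorization gives $f=\mathcal A_{\sigma,N}^0\phi=\mathcal S_{\sigma,N}^0r$. The correct reduced identity is therefore $(\Lambda_{\sigma,N}^0+M_\beta)\phi=(\Lambda_{\sigma,N}^0-\Lambda^-)f$, which is equivalent to $\phi=f-Q_\sigma f$ as in Lemma~\ref{lem:exact-inverse-identity}. Since $r$ is exactly the Robin datum of $w_\phi$, the forced Robin estimate \eqref{eq:forced-robin-pml-est} with $F=0$ gives $\|\phi\|_{Y_R}+\|\Lambda_{\sigma,N}^0\phi\|_{Y_R'}\le\mathcal P(\bar\sigma)\|r\|_{Y_R'}\le\mathcal P(\bar\sigma)\|f\|_{Y_R}$. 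This proves both estimates at once. It is the paper's argument, and it uses only Theorem~\ref{thm:pml-pde} and Lemma~\ref{lem:aux-mixed-unique}, with no new whole-box transmission problem.
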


\begin{proof}
Put $r:=(\Lambda_{\sigma,N}^0+M_\beta)\phi\in Y_R'$.
By the definition of the DtN map \eqref{eq:LambdaN-def}, the auxiliary solution
\(w_\phi\) from Lemma~\ref{lem:aux-mixed-unique} has Dirichlet trace \(\phi\) on
\(\Gamma_R\) and conormal trace \(\Lambda_{\sigma,N}^0\phi\).  Hence $r$ is
precisely the Robin datum of \(w_\phi\) on \(\Gamma_R\).  Applying
\eqref{eq:forced-robin-pml-est} in Theorem~\ref{thm:pml-pde} with $F=0$ and
datum \(r\) gives
\begin{equation}\label{eq:lambda-robin-stability}
  \|\phi\|_{Y_R}
  +\|\Lambda_{\sigma,N}^0\phi\|_{Y_R'}
  \le \mathcal P(\bar\sigma)\|r\|_{Y_R'} .
\end{equation}
It remains to bound \(r\) through its exact single-layer trace.  Define
\begin{equation}\label{eq:exact-stability-single-layer-field}
  v=\mathrm{SL}_{\sigma,N}^0 r,
  \qquad f=\mathcal S_{\sigma,N}^0r .
\end{equation}
By the trace continuity, both restrictions \(v^+\) and \(v^-\) have the same Dirichlet trace \(f\) on \(\Gamma_R\). Applying the auxiliary estimate \eqref{eq:aux-mixed-bound} of Lemma~\ref{lem:aux-mixed-unique} to the two sides gives
\begin{equation}\label{eq:S-lower-pde}
  \|\partial_{\nu_\sigma,+}v\|_{Y_R'}+
  \|\partial_{\nu_\sigma,-}v\|_{Y_R'}
  \le \mathcal P(\bar\sigma)\|f\|_{Y_R} .
\end{equation}
This, together with \eqref{eq:exact-stability-single-layer-field}, implies
\begin{equation}\label{eq:exact-S-lower}
  \|r\|_{Y_R'} = \|\partial_{\nu_\sigma,+}v-
  \partial_{\nu_\sigma,-}v\|_{Y_R'}
  \le \mathcal P(\bar\sigma)
  \|\mathcal S_{\sigma,N}^0r\|_{Y_R} .
\end{equation}
Finally, the factorization \eqref{eq:factor-A-N} and the definition of $r$ give
\begin{equation}\label{eq:exact-stability-A-as-Sr}
  \mathcal A_{\sigma,N}^0\phi=\mathcal S_{\sigma,N}^0r .
\end{equation}
Combining \eqref{eq:lambda-robin-stability}, \eqref{eq:exact-S-lower}, and \eqref{eq:exact-stability-A-as-Sr} proves both \eqref{eq:exact-bie-stability} and \eqref{eq:exact-bie-stability-rho-unweighted}.
\end{proof}

\begin{corollary}\label{cor:exact-physical-family-bound}
Let \(\phi_\sigma^0\) solve the exact PML-Green BIE \eqref{eq:exact-bie}.  Then,
\begin{equation}\label{eq:exact-physical-unweighted}
  \|\phi_\sigma^0\|_{Y_R}
  \le \mathcal P(\bar\sigma)\|g\|_{Y_R'} .
\end{equation}
Moreover, for $\chi_{\rm tail}$  defined in Lemma~\ref{lem:stretched-green-estimates}, 
\begin{equation}\label{eq:exact-physical-family-tail-small}
  \|\chi_{\rm tail}\phi_\sigma^0\|_{Y_R}
  \le \mathcal P(\bar\sigma)e^{-\alpha\bar\sigma}\|g\|_{Y_R'} .
\end{equation}
\end{corollary}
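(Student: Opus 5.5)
By Theorem~\ref{thm:equiv} and Theorem~\ref{thm:exact-conv}, the unique solution of the exact PML-Green BIE \eqref{eq:exact-bie} is the impedance trace of the Neumann PML solution, i.e. \(\phi_\sigma^0=u_\sigma|_{\Gamma_R}\). Both estimates therefore reduce to bounds on \(u_\sigma\) that were already proved at the PDE level in Theorem~\ref{thm:pml-pde}. No new boundary-integral analysis should be needed.

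For \eqref{eq:exact-physical-unweighted}, I apply the forced stability estimate \eqref{eq:forced-robin-pml-est} with \(w=u_\sigma\), \(F=0\) and \(r=g\). Indeed, \(u_\sigma\) satisfies \eqref{eq:forced-robin-pml-pde}--\eqref{eq:forced-robin-pml-boundary} with exactly these data, and \(g|_{\Gamma_R}\in Y_R'\) because \(g\) is compactly supported on \(\Sigma_p\). Keeping only the trace term \(\|w|_{\Gamma_R}\|_{Y_R}\) gives \(\|\phi_\sigma^0\|_{Y_R}\le\mathcal P(\bar\sigma)\|g\|_{Y_R'}\).

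There is an alternative route, which I would mention as a consistency check. Apply Corollary~\ref{cor:exact-bie-stability} to \(\phi=\phi_\sigma^0\). This gives \(\|\phi_\sigma^0\|_{Y_R}\le\mathcal P(\bar\sigma)\|\mathcal S_{\sigma,N}^0g\|_{Y_R}\). Completing the bound would then require \(\|\mathcal S_{\sigma,N}^0\|_{Y_R'\to Y_R}\le\mathcal P(\bar\sigma)\), which needs a growth bound for the image series. The PDE route avoids this, so I will use it as the main argument.

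For \eqref{eq:exact-physical-family-tail-small}, I invoke the tail decay \eqref{eq:pml-terminal-tail-trace-decay} of Theorem~\ref{thm:pml-pde}. Since \(\chi_{\rm tail}\phi_\sigma^0=\chi_{\rm tail}u_\sigma|_{\Gamma_R}\), it directly gives \(\|\chi_{\rm tail}\phi_\sigma^0\|_{Y_R}\le Ce^{-\alpha\bar\sigma}\|g\|_{Y_R'}\). This is at least as strong as the claimed bound, because \(C\) can be absorbed into \(\mathcal P(\bar\sigma)\).

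The only point that needs care, and the main obstacle such as it is, is justifying that the multiplication \(\phi\mapsto\chi_{\rm tail}\phi\) is the same operation in both statements. Concretely, \(\chi_{\rm tail}\) must be a smooth cutoff supported in \(\overline{\Sigma_{\rm pml}}\), so that multiplication by it is bounded on \(Y_R=H^{1/2}(\Gamma_R)\) and commutes with taking the trace. This holds under the standing definition in Lemma~\ref{lem:stretched-green-estimates}.
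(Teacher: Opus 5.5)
Your proof is correct. For the tail bound \eqref{eq:exact-physical-family-tail-small} it is the paper's argument: identify $\phi_\sigma^0=u_\sigma|_{\Gamma_R}$ via Theorem~\ref{thm:equiv} and apply \eqref{eq:pml-terminal-tail-trace-decay}.

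For the unweighted bound \eqref{eq:exact-physical-unweighted} the paper instead takes the route you set aside as a consistency check. It combines the BIE stability \eqref{eq:exact-bie-stability} with boundedness of $\mathcal S_{\sigma,N}^0:Y_R'\to Y_R$, so that $\|\phi_\sigma^0\|_{Y_R}\le\mathcal P(\bar\sigma)\|\mathcal S_{\sigma,N}^0g\|_{Y_R}\le C\mathcal P(\bar\sigma)\|g\|_{Y_R'}$. You apply the forced Robin estimate \eqref{eq:forced-robin-pml-est} directly to $u_\sigma$ with $F=0$ and $r=g$.

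The two routes are closely linked. The proof of Corollary~\ref{cor:exact-bie-stability} itself begins from \eqref{eq:forced-robin-pml-est} with $r=(\Lambda_{\sigma,N}^0+M_\beta)\phi$. For $\phi=\phi_\sigma^0$ this $r$ equals $g$ by \eqref{eq:LambdaN-Robin}. So your argument is exactly the first step \eqref{eq:lambda-robin-stability} of that proof, stopped before the detour through $\mathcal S_{\sigma,N}^0g$.

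Your version needs no control of $\|\mathcal S_{\sigma,N}^0\|_{Y_R'\to Y_R}$ as $\bar\sigma\to\infty$. In the paper, the constant $C$ in that step rests only on the generic mapping properties of layer operators for fixed $\bar\sigma$. The image-series kernel, in particular its nearest-image terms near the vertical Neumann sides, depends on $\bar\sigma$. A polynomial growth bound would therefore need its own justification, as you correctly observed.

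The paper's route presents the estimate as a consequence of BIE-side stability, which fits its narrative. For this particular bound, though, it adds a step rather than saving one.
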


\begin{proof}
The bound follows from the exact PML-Green BIE stability \eqref{eq:exact-bie-stability}
and the boundedness of \(\mathcal S_{\sigma,N}^0:Y_R'\to Y_R\):
\begin{equation}\label{eq:cor-unweighted}
  \|\phi_\sigma^0\|_{Y_R}
  \le \mathcal P(\bar\sigma)\|\mathcal S_{\sigma,N}^0g\|_{Y_R}
  \le C\mathcal P(\bar\sigma)\|g\|_{Y_R'} .
\end{equation}
By the equivalence between the exact PML-Green BIE and the Neumann PML problem,
\(\phi_\sigma^0=u_\sigma|_{\Gamma_R}\).  Applying
\eqref{eq:pml-terminal-tail-trace-decay} directly gives
\eqref{eq:exact-physical-family-tail-small}.
\end{proof}

\section{\texorpdfstring{The computable PML-BIE}{The computable PML-BIE}}\label{sec:computable}

This section proves the convergence of the computable stretched-kernel PML-BIE.  The exact PML-Green BIE \eqref{eq:exact-bie} in Section~\ref{sec:exact-bie} is
\[
  \mathcal A_{\sigma,N}^0\phi_\sigma^0=\mathcal S_{\sigma,N}^0g,
\]
whereas the computable equation \eqref{eq:computable-bie2} in Section~\ref{subsec:compbie} uses the stretched free-space kernel,
\[
  \mathcal A_\sigma^c\phi_\sigma^c=\mathcal S_\sigma^c g .
\]
Therefore,
\[
  \mathcal A_{\sigma,N}^0\phi_\sigma^c =  \mathcal S_\sigma^c g + (\mathcal B_\sigma^{\rm im} + \mathcal E_\sigma)\phi_\sigma^c,
\]
so that 
\[
  \phi_\sigma^c = (\mathcal A_{\sigma,N}^0)^{-1}\mathcal S_\sigma^c g + (\mathcal A_{\sigma,N}^0)^{-1}(\mathcal B_\sigma^{\rm im} + \mathcal E_\sigma)\phi_\sigma^c,
\]
where a localized nearest-image part \(\mathcal B_\sigma^{\rm im}\) and a
separated remainder \(\mathcal E_\sigma\) are defined later in
Subsection~\ref{subsec:nearest-image-computable}. The purpose of this section is to justify the
unique solvability of \eqref{eq:computable-bie2} and to prove its exponential
convergence to the exact PML-Green BIE solution $\phi_\sigma^0$,  by showing that 
\begin{equation}
  \label{eq:AinvBE}
  \|(\mathcal A_{\sigma,N}^0)^{-1} \mathcal B_\sigma^{\rm im}\|\to0,\qquad \bar{\sigma}\to\infty,
\end{equation}
in a weighted space norm. The remainder $\mathcal E_\sigma$ is in fact exponentially small.

\subsection{\texorpdfstring{Weighted spaces and exact inverse identity}{Weighted spaces and exact inverse identity}}\label{sec:weighted-exact-stability}

Choose
\(\chi_{\rm im}\in C^\infty(\Gamma_R)\), \(0\le \chi_{\rm im}\le1\), whose support is contained in the two flat PML segments $\overline{\Sigma_{\rm pml}}$ adjacent to the vertical Neumann boundaries at $x_1=\pm(L_1+d_1)$, and set \(\Gamma_{\rm im}:=\operatorname{supp}\chi_{\rm im}\).  On each component of \(\Gamma_{\rm im}\) we use local coordinates
  $(s,n)$ for $s>0$,
where \(s\) is the distance to the adjacent vertical boundary and \(n\) is the normal coordinate to \(\Gamma_R\).  The boundary is \(s=0\), and the PML part of \(\Gamma_R\) is \(n=0\).  By the piecewise-constant PML profile \eqref{eq:constant-layer-profile}, the tangential stretching coefficient \(\alpha_1 = 1 + \ii \bar{\sigma}/d_1\) is constant on PML segments. 
Let \(c_0=\alpha>0\) be the decay rate introduced in Corollary~\ref{cor:exact-physical-family-bound} with $\chi_{\rm tail}=\chi_{\rm im}$.
For \(a\in(0,c_0)\), put
\begin{equation}\label{eq:W-def}
  W_\sigma:=e^{a\bar\sigma}.
\end{equation}
 For fixed \(\bar\sigma\), we define the weighted norm
\begin{equation}\label{eq:X-fixed-norm}
  \|h_\sigma\|_{Y_\sigma^{a}}
  :=\|h_\sigma\|_{Y_R}+W_\sigma\|\chi_{\rm im}h_\sigma\|_{Y_R} .
\end{equation}
 The weighted family norm associated with \(Y_\sigma^{a}\) is
\begin{equation}\label{eq:X-family-norm}
  \|\mathbf h\|_{\mathfrak Y^{a}}
  :=\sup_{\bar\sigma\ge\bar\sigma_0}
  \left(\|h_\sigma\|_{Y_\sigma^{a}}\right),
\end{equation}
where the bold form $\mathbf h=\{h_\sigma\}_{\bar{\sigma}\geq \bar{\sigma}_0}$ and we assume throughout this section that $\bar\sigma_0$ is sufficiently
large. The localized subspace is
\begin{equation}\label{eq:Xim-def}
  \mathfrak Y_{\rm im}^{a}
  :=\left\{\mathbf f\in\mathfrak Y^{a}:\
  f_\sigma=\chi_{\rm im}f_\sigma\quad\hbox{for all }\bar\sigma\ge\bar\sigma_0\right\} .
\end{equation}
The choice of \(a\) is made after some exponential rates are fixed in this section.

For \(h\in Y_R\), let \(u^-_\sigma(h)\) solve the lower one-sided problem
\begin{equation}\label{eq:Tminus-problem}
  \mathcal L_\sigma u^-_\sigma(h)=0\quad\hbox{in }\Omega_R^-,\qquad
  \gamma_-u^-_\sigma(h)=h\quad\hbox{on }\Gamma_R,\qquad \partial_{\nu_\sigma}
  u_\sigma^-(h) = 0\quad\hbox{on\ }\Sigma_R\cap\Omega_R.
\end{equation}
Define
\begin{equation}\label{eq:Tminus-def}
  T^-_\sigma h:=\partial_{\nu_\sigma,-}u^-_\sigma(h)+M_\beta h\in Y_R' .
\end{equation}
For \(r\in Y_R'\), let \(v^+_\sigma(r)\) solve the upper one-sided Robin problem
\begin{equation}\label{eq:Rplus-problem}
  \mathcal L_\sigma v^+_\sigma(r)=0\quad\hbox{in }\Omega_R^+,\qquad
  \partial_{\nu_\sigma,+}v^+_\sigma(r)+M_\beta\gamma_+v^+_\sigma(r)=r
  \quad\hbox{on }\Gamma_R,\qquad \partial_{\nu_\sigma}
  v_\sigma^+(r) = 0\quad\hbox{on\ }\Sigma_R\cap\Omega_R,
\end{equation}
and set
\begin{equation}\label{eq:Rplus-def}
  R^+_\sigma r:=\gamma_+v^+_\sigma(r)\in Y_R .
\end{equation}
The composite map is
\begin{equation}\label{eq:Q-def}
  Q_\sigma:=R^+_\sigma T^-_\sigma .
\end{equation}
Below, we prove the exact inverse identity $(\mathcal A_{\sigma,N}^0)^{-1}= I - Q_\sigma$.

\begin{lemma}\label{lem:exact-inverse-identity}
Let \(f\in Y_R\), and let \(\phi_\sigma\) solve
  $\mathcal A_{\sigma,N}^0\phi_\sigma=f$.
Then
\begin{equation}\label{eq:identity-f-Qf}
  \phi_\sigma=f-Q_\sigma f .
\end{equation}
\end{lemma}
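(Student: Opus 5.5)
The plan is to read the identity off the factorization \eqref{eq:factor-A-N} and the single-layer jump relations, by building the field $\mathrm{SL}_{\sigma,N}^0 r$ and comparing it on each side of $\Gamma_R$ with known one-sided solutions.

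First, I would put $r:=(\Lambda_{\sigma,N}^0+M_\beta)\phi_\sigma\in Y_R'$. Then \eqref{eq:factor-A-N} gives $f=\mathcal A_{\sigma,N}^0\phi_\sigma=\mathcal S_{\sigma,N}^0 r$, exactly as in the proof of Corollary~\ref{cor:exact-bie-stability}. Next I would set $v:=\mathrm{SL}_{\sigma,N}^0 r$ in $\mathcal D_R\setminus\overline{\Gamma_R}$. This field has the following properties:
\begin{itemize}
\item $\mathcal L_\sigma v=0$ off $\Gamma_R$;
\item it has homogeneous conormal trace on $\Sigma_R$, by \eqref{eq:GN-boundary};
\item it is continuous across $\Gamma_R$ with common Dirichlet trace $\mathcal S_{\sigma,N}^0 r=f$;
\item it satisfies the jump relation $r=\partial_{\nu_\sigma,+}v-\partial_{\nu_\sigma,-}v$, with the sign convention used in Lemma~\ref{lem:S-exact-injective}.
\end{itemize}

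Second, I would identify the two restrictions of $v$. On $\Omega_R^-$, the restriction $v^-$ solves the lower mixed problem \eqref{eq:Tminus-problem} with data $f$. By the uniqueness in Lemma~\ref{lem:aux-mixed-unique} (case $D=\Omega_R^-$), $v^-=u^-_\sigma(f)$, and hence $\partial_{\nu_\sigma,-}v+M_\beta f=T^-_\sigma f$. On the plus side, the jump relation then gives the Robin datum of $v^+$:
\[
\partial_{\nu_\sigma,+}v+M_\beta\gamma_+v=r+\partial_{\nu_\sigma,-}v+M_\beta f=r+T^-_\sigma f .
\]

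Third, I would compare $v^+$ with the auxiliary solution $w_{\phi_\sigma}$ from Lemma~\ref{lem:aux-mixed-unique} (case $D=\Omega_R$). By \eqref{eq:LambdaN-def}, $w_{\phi_\sigma}$ has trace $\phi_\sigma$ and Robin datum $\Lambda_{\sigma,N}^0\phi_\sigma+M_\beta\phi_\sigma=r$. The difference $z:=v^+-w_{\phi_\sigma}$ then has these properties:
\begin{itemize}
\item $\mathcal L_\sigma z=0$ in $\Omega_R$;
\item $\partial_{\nu_\sigma}z=0$ on $\Sigma_R\cap\Omega_R$;
\item $\partial_{\nu_\sigma,+}z+M_\beta\gamma_+z=T^-_\sigma f$ on $\Gamma_R$.
\end{itemize}
This is exactly the upper Robin problem \eqref{eq:Rplus-problem} with datum $T^-_\sigma f$. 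The unique solvability of the Neumann PML problem with Robin data (Theorem~\ref{thm:pml-pde}, estimate \eqref{eq:forced-robin-pml-est} with $F=0$) gives $z=v^+_\sigma(T^-_\sigma f)$. Taking traces yields $f-\phi_\sigma=\gamma_+z=R^+_\sigma T^-_\sigma f=Q_\sigma f$, which is \eqref{eq:identity-f-Qf}.

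The main obstacle is bookkeeping rather than analysis. Two points need care:
\begin{itemize}
\item The sign and orientation conventions in the jump relation, since the conormal uses the same geometric normal $\nu$ on both sides.
\item The justification that the restrictions $v^\pm$ lie in $H^1$ of their respective components, so that the one-sided uniqueness results apply.
\end{itemize}
For the second point, I would use the mapping properties of the exact PML-Green layer potentials on open arcs (\cite{McLean2000}), in the same way as in the proofs of Lemma~\ref{lem:S-exact-injective} and Corollary~\ref{cor:exact-bie-stability}. There, the identical step of treating $v^\pm$ as mixed solutions was already used.
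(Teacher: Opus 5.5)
Your argument is correct and is essentially the paper's proof. The paper's field $U_\sigma=\mathrm{DL}_{\sigma,N}^0\phi_\sigma+\mathrm{SL}_{\sigma,N}^0(M_\beta\phi_\sigma)$ coincides with your field, because Green's representation of $w_{\phi_\sigma}$ gives $\mathrm{SL}_{\sigma,N}^0\Lambda_{\sigma,N}^0\phi_\sigma-\mathrm{DL}_{\sigma,N}^0\phi_\sigma=w_{\phi_\sigma}$ in $\Omega_R$ and $=0$ in $\Omega_R^-$: it equals your $\mathrm{SL}_{\sigma,N}^0r$ on $\Omega_R^-$ and your $z=v^+-w_{\phi_\sigma}$ on $\Omega_R$. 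Both proofs then identify the lower piece through the Dirichlet problem, transmit the Robin datum across $\Gamma_R$, and identify the upper piece through the Robin problem. The difference is only bookkeeping: you use the factorization \eqref{eq:factor-A-N} and $w_{\phi_\sigma}$ instead of the double-layer jump relations, and you state explicitly the one-sided uniqueness steps that the paper leaves implicit.
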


\begin{proof}
Set
\begin{equation}\label{eq:identity-layer-field}
  U_\sigma
  :=\mathrm{DL}_{\sigma,N}^0\phi_\sigma
   +\mathrm{SL}_{\sigma,N}^0(M_\beta\phi_\sigma).
\end{equation}
The jump relations give
\begin{align}
  \gamma_-U_\sigma
  &=\left(\frac12I+\mathcal K_{\sigma,N}^0+
      \mathcal S_{\sigma,N}^0M_\beta\right)\phi_\sigma
    =f,
    \label{eq:identity-minus-trace}\\
  \gamma_+U_\sigma
  &=\left(-\frac12I+\mathcal K_{\sigma,N}^0+
      \mathcal S_{\sigma,N}^0M_\beta\right)\phi_\sigma
    =f-\phi_\sigma .
    \label{eq:identity-plus-trace}
\end{align}
Let \(\eta_\sigma:=\gamma_+U_\sigma=f-\phi_\sigma\).  Since the conormal trace of the double-layer potential is continuous and the single-layer conormal trace has jump \(M_\beta\phi_\sigma\),
\begin{equation}\label{eq:identity-conormal-jump}
  \partial_{\nu_\sigma,+}U_\sigma-
  \partial_{\nu_\sigma,-}U_\sigma=M_\beta\phi_\sigma .
\end{equation}
Together with \(\gamma_-U_\sigma-\gamma_+U_\sigma=\phi_\sigma\), this gives
\begin{align}
 \left(\partial_{\nu_\sigma,-}U_\sigma+M_\beta\gamma_-U_\sigma\right)
 -\left(\partial_{\nu_\sigma,+}U_\sigma+M_\beta\gamma_+U_\sigma\right) 
 =-M_\beta\phi_\sigma+M_\beta(\gamma_-U_\sigma-\gamma_+U_\sigma)=0 .
 \label{eq:identity-robin-continuity}
\end{align}
Thus the upper trace \(\eta_\sigma\) is obtained by applying the upper Robin-to-Dirichlet map to the lower Robin trace generated by the Dirichlet datum \(f\):
\begin{equation}\label{eq:identity-eta-Q}
  \eta_\sigma=Q_\sigma f .
\end{equation}
Since \(\phi_\sigma=f-\eta_\sigma\), \eqref{eq:identity-f-Qf} follows.
\end{proof}

The operator \(Q_\sigma\) measures the return to the upper trace after one lower-side transmission.  It turns out that its leading part is the local half-line operator $Q_{\sigma,t}$  introduced below.  The local flat PML operator on the PML segment is
\begin{equation}\label{eq:flat-pml-equation}
  \mathcal L_{\sigma,t}u
  :=\frac1{\alpha_1}\partial_s^2u+
  \alpha_1\partial_n^2u+k^2\alpha_1u .
\end{equation}
The vertical artificial boundary is represented by \(\partial_su(0,n)=0\).  On \(n=0\) we use the same conormal convention as in the jump formula, namely \(\partial_{\nu_\sigma,-}w:=\alpha_1\partial_nw(s,0^-)\) and \(\partial_{\nu_\sigma,+}w:=\alpha_1\partial_nw(s,0^+)\).  On the flat part, \eqref{eq:beta-assump} gives \(M_\beta h=-Zh\).

For \(h\in H^{1/2}(0,\infty)\), let \(u_{\sigma,t}^-(h)\) be the downgoing solution of the lower half-line problem
\begin{equation}\label{eq:lower-flat-model}
\begin{cases}
 \mathcal L_{\sigma,t}u=0, & s>0,\ n<0,\\[1mm]
 u(s,0)=h(s), & s>0,\\[1mm]
 \partial_su(0,n)=0, & n<0,
\end{cases}
\end{equation}
and define the lower Dirichlet-to-Robin map by
\begin{equation}\label{eq:Tflat-def}
  T_{\sigma,t}^-h
  :=\partial_{\nu_\sigma,-}u_{\sigma,t}^-(h)+M_\beta h .
\end{equation}
For a Robin datum \(r\), let \(v_{\sigma,t}^+(r)\) be the upgoing solution of the upper half-line problem
\begin{equation}\label{eq:upper-flat-model}
\begin{cases}
 \mathcal L_{\sigma,t}v=0, & s>0,\ n>0,\\[1mm]
 \partial_{\nu_\sigma,+}v(s,0)+M_\beta v(s,0)=r(s), & s>0,\\[1mm]
 \partial_sv(0,n)=0, & n>0,
\end{cases}
\end{equation}
and define the upper Robin-to-Dirichlet map by
\begin{equation}\label{eq:Rflat-def}
  R_{\sigma,t}^+r:=v_{\sigma,t}^+(r)|_{n=0} .
\end{equation}
Set \(Q_{\sigma,t}:=R_{\sigma,t}^+T_{\sigma,t}^-\), which is the PML half-line analogue of \(Q_\sigma=R_\sigma^+T_\sigma^-\). It is used to control the leading part of \(Q_\sigma\mathcal B_\sigma^{\rm im}\) on \(\Gamma_{\rm im}\).

\begin{lemma}\label{lem:flat-symbol-ratio}
We have
\begin{equation}\label{eq:flat-composite-bound}
  \|Q_{\sigma,t}h\|_{H^{1/2}(0,\infty)}
  \le C\|h\|_{H^{1/2}(0,\infty)} .
\end{equation}
\end{lemma}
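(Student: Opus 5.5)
The plan is to diagonalize the half-line model problems \eqref{eq:lower-flat-model} and \eqref{eq:upper-flat-model} with a cosine transform in $s$ and to show that $Q_{\sigma,t}$ is a Fourier multiplier whose symbol has modulus at most one. The homogeneous Neumann condition $\partial_s u(0,n)=0$ means we can extend $h$, $u$ and $v$ evenly across $s=0$. The even extension $E h$ satisfies
\[
\|h\|_{H^{1/2}(0,\infty)}\le\|Eh\|_{H^{1/2}(\mathbb R)}\le C\|h\|_{H^{1/2}(0,\infty)} ,
\]
since even reflection is bounded on $H^{s}$ for $s<3/2$. Because $\alpha_1=1+\ii\bar\sigma/d_1$ is constant on the PML segment, the operator $\mathcal L_{\sigma,t}$ has constant coefficients. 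Its even extension therefore acts on the whole-line Fourier transform $\hat u(\xi,n)$ of $Eu$ as
\[
\alpha_1\partial_n^2\hat u=\Big(\frac{\xi^2}{\alpha_1}-k^2\alpha_1\Big)\hat u ,
\qquad\text{equivalently}\qquad
\alpha_1^2\partial_n^2\hat u=w(\xi)^2\hat u,\qquad w(\xi):=\sqrt{\xi^2-k^2\alpha_1^2}.
\]
Here the branch is fixed by $\operatorname{Re}w>0$.

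Next I would identify the branch and solve the two one-dimensional problems. We have $\operatorname{Im}(k^2\alpha_1^2)=2k^2\bar\sigma/d_1>0$, so $\xi^2-k^2\alpha_1^2$ lies in the open lower half-plane for every real $\xi$. Hence the principal root $w(\xi)$ satisfies $\operatorname{Re}w>0$ and $\operatorname{Im}w<0$; in particular it never vanishes.

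The downgoing solution, decaying as $n\to-\infty$, is $\hat u=\hat h\,e^{wn/\alpha_1}$. For this branch choice I would still check that $\operatorname{Re}(w/\alpha_1)>0$ in the relevant range. If that fails, the same computation goes through with $\mu=w/\alpha_1$ chosen as the root of $\mu^2=\xi^2/\alpha_1^2-k^2$ with $\operatorname{Re}\mu>0$, since only the product $\alpha_1\mu$ enters the symbol. This solution gives $\partial_{\nu_\sigma,-}u=\alpha_1\partial_n u(\cdot,0^-)=w\,\hat h$. Using $M_\beta h=-Zh$ on the flat part, the symbol of $T_{\sigma,t}^-$ is $w-Z$.

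The upgoing solution is $\hat v=c\,e^{-wn/\alpha_1}$, and the Robin condition reads $-(w+Z)c=\hat r$. So the symbol of $R_{\sigma,t}^+$ is $-1/(w+Z)$. Since $Z>0$ and $\operatorname{Re}w>0$, the denominator $w+Z$ never vanishes. Composing the two,
\[
\widehat{Q_{\sigma,t}h}(\xi)=q(\xi)\,\widehat{Eh}(\xi),\qquad q(\xi)=\frac{Z-w(\xi)}{Z+w(\xi)}.
\]

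The bound now follows from an elementary inequality. For $Z>0$ and $\operatorname{Re}w\ge0$,
\[
|Z-w|^2-|Z+w|^2=-4Z\operatorname{Re}w\le0 ,
\]
so $|q(\xi)|\le1$ for all $\xi$, uniformly in $\bar\sigma$. Consequently $\|q\,\widehat{Eh}\|_{H^{1/2}}\le\|Eh\|_{H^{1/2}(\mathbb R)}$. The product $q\,\widehat{Eh}$ is the transform of an even function, because $q$ depends only on $\xi^2$. Restricting to $s>0$ and using the norm equivalence above yields \eqref{eq:flat-composite-bound} with $C$ independent of $\bar\sigma$.

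The main obstacle is not this computation but justifying that the cosine-transform solutions are \emph{the} downgoing and upgoing solutions intended in \eqref{eq:lower-flat-model} and \eqref{eq:upper-flat-model}. This requires uniqueness in the class of solutions that do not grow in $|n|$, together with a consistent branch choice, since $\operatorname{Re}$ of the exponent rate must be positive. I would settle this by first defining the downgoing and upgoing solutions as the $H^1$-type (decaying) solutions, and then showing that the exponentially growing modes are excluded for every fixed $\xi$.
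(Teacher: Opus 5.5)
Your proposal is correct and is essentially the paper's proof. The cosine transform (equivalently, your even extension followed by the Fourier transform, which also makes explicit the norm equivalence the paper leaves implicit) turns $Q_{\sigma,t}$ into multiplication by $(\tau_\sigma-Z)/(-\tau_\sigma-Z)=(Z-w)/(Z+w)$, with the same branch $\operatorname{Re}w\ge0\ge\operatorname{Im}w$. Your identity $|Z-w|^2-|Z+w|^2=-4Z\operatorname{Re}w$ spells out the paper's ``basic algebraic manipulations'' and gives the constant $1$ uniformly in $\bar\sigma$.

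One correction to your closing paragraph. Write $w=a-\ii b$; then $ab=k^2\bar\sigma/d_1$ and $\operatorname{Re}(w/\alpha_1)=(a^2-k^2\bar\sigma^2/d_1^2)/(a|\alpha_1|^2)$. This is positive for $\xi\neq0$ but vanishes at $\xi=0$, where $w/\alpha_1=-\ii k$ because $n$ is unstretched in the local model. So ``downgoing/upgoing'' must mean the outgoing (limiting-absorption) branch you already selected, not an $H^1$-decaying class on the half-plane, which generic $H^{1/2}$ data do not admit.
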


\begin{proof}
The Neumann boundary \(s=0\) is diagonalized by the cosine transform.  Let \(\mathcal C\) denote the cosine transform on \((0,\infty)\):
\begin{equation}\label{eq:cosine-transform}
  \widehat w_c(\xi)=\mathcal Cw(\xi)
  :=\sqrt{\frac2\pi}\int_0^\infty w(s)\cos(\xi s)\,ds,
  \qquad \xi\ge0 .
\end{equation}
By the cosine transform and separation of variables, with
\begin{equation}\label{eq:tau-symbol-def}
  \tau_\sigma(\xi):=\left(\xi^2-k^2\alpha_1^2\right)^{1/2},
  \qquad
  \tau_\sigma=A-\ii B,
  \quad A\ge0,
  \quad B\ge0,
\end{equation}
one obtains directly
\begin{equation}\label{eq:Qflat-symbol}
  \widehat{Q_{\sigma,t}h}_c(\xi)
  =\frac{\tau_\sigma(\xi)-Z}{-\tau_\sigma(\xi)-Z}\widehat h(\xi).
\end{equation}
Basic algebraic manipulations show that, 
\begin{equation}\label{eq:flat-ratio}
  \left|\frac{\tau_\sigma(\xi)-Z}{-\tau_\sigma(\xi)-Z}\right|
  +\left|\frac{-\tau_\sigma(\xi)-Z}{\tau_\sigma(\xi)-Z}\right|
  \le C .
\end{equation}
Finally, \eqref{eq:flat-composite-bound} follows from \eqref{eq:Qflat-symbol},
\end{proof}


\subsection{A nearest-image corrected BIE}\label{subsec:nearest-image-computable}

We first study an intermediate computable equation in which the nearest Neumann
reflected image is retained explicitly.  

We define the nearest-image kernel using the reflected series in
Section~\ref{sec:exact-green}.  Decompose \(\Gamma_{\rm im}=\Gamma_{\rm
im}^{L}\cup\Gamma_{\rm im}^{R}\) and \(\chi_{\rm im}=\chi_{\rm im}^{L}+\chi_{\rm
im}^{R}\), where the superscripts indicate the PML segments adjacent to the left
and right vertical Neumann boundaries.  On each component \(\Gamma_{\rm
im}^{\iota}\), \(\iota=L,R\), we use the fixed local coordinate system $(s,n)$
in Section~\ref{sec:weighted-exact-stability}, with the adjacent boundary placed
at \(s=0\), and write \(X_{\rm T}^{\iota}(s,n)=(\alpha_1s,n)\) and
\((s,n)^{\iota,*}=(-s,n)\).  The nearest-image kernel consists of the two fixed
boundary reflections
\begin{equation}\label{eq:G-near-def}
  G_\sigma^{\rm near}(x,y)
  :=
  \begin{cases}
  \Phi_k\big(X_{\rm T}^{\iota}(x),X_{\rm T}^{\iota}(y^{\iota,*})\big),
  & x,y\in\Gamma_{\rm im}^{\iota},\quad \iota=L,R,\\
  0,& x\in\Gamma_{\rm im}^{L},\ y\in\Gamma_{\rm im}^{R}
      \hbox{ or }x\in\Gamma_{\rm im}^{R},\ y\in\Gamma_{\rm im}^{L}.
  \end{cases}
\end{equation}
Thus no left--right cross reflection is included in this kernel.  Equivalently, the following operators are defined componentwise:
\begin{align}
  \mathcal S_{\sigma}^{\rm im}q
  &:=\chi_{\rm im} \int_{\Gamma_R}G_\sigma^{\rm near}(\cdot,y)
      \chi_{\rm im}(y)q(y)\,ds_y,
  \label{eq:S-im-def}\\
  \mathcal K_{\sigma}^{\rm im}\phi
  &:=\chi_{\rm im}\operatorname{p.v.}\int_{\Gamma_R}
      \partial_{\nu_\sigma,y}^{*}G_\sigma^{\rm near}(\cdot,y)
      \chi_{\rm im}(y)\phi(y)\,ds_y,
  \label{eq:K-im-def}\\
  \mathcal B_{\sigma}^{\rm im}
  &:=\mathcal K_{\sigma}^{\rm im}+\mathcal S_{\sigma}^{\rm im}M_\beta
    =\mathcal S_{\sigma}^{\rm im}M_\beta .
  \label{eq:B-im-def}
\end{align}
Here we use the fact that the double-layer part ${\cal K}_\sigma^{\rm im}=0$,
since $\Gamma_{\rm im}$ is flat; we keep it here to make the following decomposition more
clearly. An important observation is 
\begin{equation}\label{eq:Bim-localized}
  \mathcal B_{\sigma}^{\rm im}
  =\chi_{\rm im}\mathcal B_{\sigma}^{\rm im}\chi_{\rm im} .
\end{equation}
Consequently, $\mathcal A_{\sigma,N}^0$ can be decomposed into three parts as follows:
\begin{equation}\label{eq:A0-Atilde-decomposition}
  \mathcal A_{\sigma,N}^0=\mathcal A_\sigma^c + \mathcal B_{\sigma}^{\rm im}+\mathcal E_\sigma.
\end{equation}
Here, $\mathcal A_\sigma^c$ contains the stretched free-space kernel $\Phi_\sigma$. For $\mathcal B_{\sigma}^{\rm im}$, the kernel in each PML segment $\Gamma_{\rm im}^{\iota}$ has the local form \(\Phi_k\big((\alpha_1s,n),(-\alpha_1s',n')\big)\), and hence depends on the reflected tangential distance \(s+s'\).  This is precisely the nearest vertical-boundary reflected term in the image expansion \eqref{eq:neumann-image-series}. The remaining part \(\mathcal E_\sigma\) includes the left--right, right--left, and all other reflected terms that are separated from this nearest-image configuration. The following lemma justfies that ${\cal E}_\sigma$ provides an exponentially small perturbation.

\begin{lemma}\label{lem:Ainv-E-small}
There exists \(c_1>0\), independent of \(\bar\sigma_0\), such that
\begin{equation}\label{eq:Ainv-E-small}
  \left\|(\mathcal A_{\sigma,N}^0)^{-1}\mathcal E_\sigma\right\|_{\mathfrak Y^{a}\to\mathfrak Y^{a}}
  \le \mathcal P(\bar\sigma_0)e^{-(c_1-a)\bar\sigma_0},
  \qquad 0<a<c_1 .
\end{equation}
\end{lemma}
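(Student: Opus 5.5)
The plan is to split the estimate into three ingredients. First, $\mathcal E_\sigma$ has a kernel that is exponentially small in a strong norm. Second, $(\mathcal A_{\sigma,N}^0)^{-1}$ grows at most polynomially on $Y_R$, by Corollary~\ref{cor:exact-bie-stability}. Third, the weight $W_\sigma=e^{a\bar\sigma}$ costs at most a factor $e^{a\bar\sigma}$ when we pass between $Y_R$ and $Y_\sigma^a$.

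We begin by writing out $\mathcal E_\sigma$. From \eqref{eq:GN-local-splitting} and \eqref{eq:A0-Atilde-decomposition}, $\mathcal E_\sigma=\mathcal K^{\mathcal E}+\mathcal S^{\mathcal E}M_\beta$. The kernel of these operators is $R_{\sigma,N}-\chi_{\rm im}(x)G_\sigma^{\rm near}(x,y)\chi_{\rm im}(y)$, together with its conormal derivative in $y$. We then split $\Gamma_R\times\Gamma_R$ into two regions. The first is the region where both variables lie within a small fixed distance of the same corner $(\pm(L_1+d_1),0)$. The second is its complement.

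On the complement, every image point $Y_{\ell,\delta}$ with $(\ell,\delta)\neq(0,0)$ is a fixed positive Euclidean distance from $x$. The argument of Proposition~\ref{prop:reflected-kernel-estimate} then applies: the estimate \eqref{eq:Greendecay} holds, and each reflected path crosses a layer of stretched length of order $\bar\sigma$. This gives a $C^m$ bound $Ce^{-c\bar\sigma}$ for the kernel, where the tail of the lattice sum is summable because $\operatorname{Im}T_j\sim\bar\sigma$. The nearest-image term with the cutoffs $\chi_{\rm im}$ is handled the same way. Either it is absent there, or it is exponentially small for the same reason, since $\operatorname{Im}(\alpha_1(s+s'))\gtrsim\bar\sigma(s+s')/d_1$ once $s+s'$ is bounded below.

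In the corner region, the only image that becomes singular is the nearest vertical reflection. This is exactly the term that $\mathcal B_\sigma^{\rm im}$ removes, provided $\chi_{\rm im}\equiv1$ near the corners; we will arrange $\chi_{\rm im}$ so that this holds. The horizontal reflection across $x_2=-(L_2+d_2)$ and the corner images stay at distance of order $L_2+d_2$, so they are exponentially small. The remaining difficulty is the region where only one of $x,y$ lies in $\operatorname{supp}\chi_{\rm im}$ and the cutoff is not identically one. We will avoid this by choosing $\chi_{\rm im}=1$ on a neighborhood of each corner segment. Then, off that neighborhood, $s+s'$ stays bounded below and the nearest image is exponentially small anyway. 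The outcome of this step is
\[
\|\mathcal E_\sigma\|_{Y_R\to Y_R}\le \mathcal P(\bar\sigma)e^{-c\bar\sigma}.
\]

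Next comes the weighted bookkeeping. For $\mathbf h\in\mathfrak Y^a$, we have $\|h_\sigma\|_{Y_R}\le\|h_\sigma\|_{Y^a_\sigma}$ and $\|\chi_{\rm im}\psi\|_{Y_R}\le C\|\psi\|_{Y_R}$. It follows that
\[
\|(\mathcal A_{\sigma,N}^0)^{-1}\mathcal E_\sigma h_\sigma\|_{Y_\sigma^a}\le C(1+e^{a\bar\sigma})\mathcal P(\bar\sigma)\|\mathcal E_\sigma\|\,\|h_\sigma\|_{Y_R}\le \mathcal P(\bar\sigma)e^{-(c-a)\bar\sigma}\|\mathbf h\|_{\mathfrak Y^a}.
\]
Set $c_1=c$, reduced slightly so that the polynomial factor is absorbed. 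Taking the supremum over $\bar\sigma\ge\bar\sigma_0$ then gives \eqref{eq:Ainv-E-small}, since $\mathcal P(\bar\sigma)e^{-(c_1-a)\bar\sigma}$ is eventually decreasing. Note that we never need the improved weighted stability here, because the exponential smallness of $\mathcal E_\sigma$ beats the weight loss $e^{a\bar\sigma}$ directly.

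The main obstacle is the corner analysis: showing that, after the nearest image is subtracted, every remaining term in the series \eqref{eq:neumann-image-series} stays uniformly separated near the corners. Doing this requires checking the geometry of the images with $\delta_1=1$ and $\ell\neq0$, and of the cross images $\delta=(1,1)$. It also requires verifying mapping bounds for the $Y_R'\to Y_R$ single-layer part with merely smooth, exponentially small kernels, which is where the $C^m$ kernel bound with $m\ge1$ is used.
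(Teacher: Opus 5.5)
Your proposal is correct and follows essentially the same route as the paper: an exponentially small bound for $\mathcal E_\sigma$ on $Y_R$, a polynomial bound for $(\mathcal A_{\sigma,N}^0)^{-1}$, and absorption of the weight $W_\sigma=e^{a\bar\sigma}$ into the exponential decay. For the inverse you cite Corollary~\ref{cor:exact-bie-stability}, whereas the paper writes $(\mathcal A_{\sigma,N}^0)^{-1}=I-Q_\sigma$ and bounds $Q_\sigma$ polynomially through the one-sided PDE estimates; this is the same stability. Your endpoint analysis, including the requirement $\chi_{\rm im}\equiv1$ near $x_1=\pm(L_1+d_1)$, supplies a detail the paper leaves implicit when it cites Proposition~\ref{prop:reflected-kernel-estimate}, whose stated bounds do not cover source and target both near the same vertical wall.
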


\begin{proof}
By Proposition~\ref{prop:reflected-kernel-estimate}, or equivalently by the
differentiated Hankel bound \eqref{eq:Greendecay} followed by the standard
local-coordinate Sobolev mapping estimate, for \(r_\sigma=\mathcal E_\sigma
h_\sigma\),
\begin{align}
  \|r_\sigma\|_{Y_R}
  &\le \mathcal P(\bar\sigma)e^{-c_1\bar\sigma}\|h_\sigma\|_{Y_\sigma^{a}},
  \label{eq:E-r-unweighted-small}\\
  W_\sigma\|\chi_{\rm im}r_\sigma\|_{Y_R}
  &\le \mathcal P(\bar\sigma)e^{-(c_1-a)\bar\sigma}\|h_\sigma\|_{Y_\sigma^{a}} .
  \label{eq:E-r-small}
\end{align}
To estimate \(Q_\sigma r_\sigma\),  we directly apply Theorem~\ref{thm:pml-pde} and Lemma~\ref{lem:aux-mixed-unique}  to the two one-sided subproblems \eqref{eq:lower-flat-model}  and \eqref{eq:upper-flat-model} defining \(Q_\sigma\), yielding
\begin{equation}\label{eq:QE-unweighted-poly}
  \|Q_\sigma r_\sigma\|_{Y_R}
  \le \mathcal P(\bar\sigma)\|r_\sigma\|_{Y_R} .
\end{equation}
Therefore
\begin{align}\label{eq:QE-r-small}
  \|Q_\sigma r_\sigma\|_{Y_R}
   +W_\sigma\|\chi_{\rm im}Q_\sigma r_\sigma\|_{Y_R}       
  \le
  \mathcal P(\bar\sigma)(1+W_\sigma)e^{-c_1\bar\sigma}
   \|h_\sigma\|_{Y_\sigma^{a}}          
  \le \mathcal P(\bar\sigma)e^{-(c_1-a)\bar\sigma}
   \|h_\sigma\|_{Y_\sigma^{a}}.
\end{align}
Since \((\mathcal A_{\sigma,N}^0)^{-1}r_\sigma=r_\sigma-Q_\sigma r_\sigma\), \eqref{eq:E-r-small}--\eqref{eq:QE-r-small} yield the fixed-\(\bar\sigma\) estimate.  Taking \(\sup_{\bar\sigma\ge\bar\sigma_0}\) proves \eqref{eq:Ainv-E-small}.
\end{proof}

Let \(\mathcal A_\sigma^{\rm ni}:=\mathcal A_\sigma^c+\mathcal B_\sigma^{\rm im}\). 
Consider the nearest-image corrected equation
\begin{equation}\label{eq:nearest-image-bie}
  \mathcal A_\sigma^{\rm ni}\phi_\sigma^{\rm ni}
  =\mathcal S_\sigma^c g .
\end{equation}
For \(x\) in the physical region $\Omega_p$, define the corresponding nearest-image reconstructed field by
\begin{align}
  u_\sigma^{\rm ni}(x)
  &:=(\mathrm{SL}_\sigma^c+\mathrm{SL}_\sigma^{\rm im})
       (g-M_\beta\phi_\sigma^{\rm ni})(x)
     -(\mathrm{DL}_\sigma^c+\mathrm{DL}_\sigma^{\rm im})
       \phi_\sigma^{\rm ni}(x),
  \label{eq:nearest-image-field}
\end{align}
where \(\mathrm{SL}_\sigma^{\rm im}\) and \(\mathrm{DL}_\sigma^{\rm im}\) are the volume layer potentials generated by the same localized nearest-image kernel used in
\eqref{eq:S-im-def}--\eqref{eq:K-im-def}.  Since \(g\) is supported in $\Omega_p$, the image single-layer source term vanishes when applied to \(g\); it is kept in \eqref{eq:nearest-image-field} only to show a form compared with the exact one \eqref{eq:green-rep-pde}. The next theorem illustrates the exponential convergence of $u_\sigma^{\rm ni}$ to $u$ in the physical domain $\Omega_p$.
\begin{theorem}\label{thm:nearest-image-corrected-conv}
The nearest-image corrected equation \eqref{eq:nearest-image-bie} is uniquely solvable.  Moreover, for any compact set \(K\Subset\Omega_p\), the solution \eqref{eq:nearest-image-field} satisfies
\begin{equation}\label{eq:nearest-image-total-field-error}
  \|u-u_\sigma^{\rm ni}\|_{H^1(K)}
  \le \mathcal P(\bar\sigma)\left(e^{-c_{\rm ni}\bar\sigma}
       +e^{-Z(L_2+d_2)}\right)\|g\|_{Y_R'} .
\end{equation}
\end{theorem}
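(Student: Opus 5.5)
Since $\mathcal A_\sigma^{\rm ni}=\mathcal A_{\sigma,N}^0-\mathcal E_\sigma$ by \eqref{eq:A0-Atilde-decomposition}, I will treat the nearest-image corrected equation as an exponentially small perturbation of the exact PML-Green BIE. The plan is to work in the weighted family space $\mathfrak Y^{a}$ and write
\[
  \mathcal A_\sigma^{\rm ni}=\mathcal A_{\sigma,N}^0\big(I-(\mathcal A_{\sigma,N}^0)^{-1}\mathcal E_\sigma\big).
\]
Lemma~\ref{lem:Ainv-E-small} gives $\|(\mathcal A_{\sigma,N}^0)^{-1}\mathcal E_\sigma\|_{\mathfrak Y^a\to\mathfrak Y^a}\le\mathcal P(\bar\sigma_0)e^{-(c_1-a)\bar\sigma_0}$. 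Choosing $a<\min(c_0,c_1)$ and $\bar\sigma_0$ large makes this at most $1/2$. A Neumann series then gives invertibility of $I-(\mathcal A_{\sigma,N}^0)^{-1}\mathcal E_\sigma$. Since $\mathcal A_{\sigma,N}^0$ is bijective on $Y_R$ (Theorem~\ref{thm:exact-conv} and Corollary~\ref{cor:exact-bie-stability}), and the fixed-$\bar\sigma$ weighted norm is equivalent to $\|\cdot\|_{Y_R}$, this yields unique solvability for each $\bar\sigma\ge\bar\sigma_0$. The same estimate \eqref{eq:QE-r-small} also holds at fixed $\bar\sigma$, so there is no need to pass through families.

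\textbf{Density error.} The right-hand sides differ: exact is $\mathcal S_{\sigma,N}^0g$, computable is $\mathcal S_\sigma^cg$. Since $g$ is supported in $\Omega_p\cap\Gamma_R$, i.e. away from the PML segments, the difference $(\mathcal S_{\sigma,N}^0-\mathcal S_\sigma^c)g$ has kernel $R_{\sigma,N}$ with sources in $\Gamma_R^{\rm int}$. By \eqref{eq:boundary-source-interior-kernel-small} this is $O(e^{-\alpha_m\bar\sigma})\|g\|_{Y_R'}$ in $Y_R$; it is also $O(e^{-(\alpha_m-a)\bar\sigma})$ in the weighted norm if $a<\alpha_m$. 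Subtracting the two equations gives
\[
  \mathcal A_\sigma^{\rm ni}(\phi_\sigma^{\rm ni}-\phi_\sigma^0)=(\mathcal S_\sigma^c-\mathcal S_{\sigma,N}^0)g+\mathcal E_\sigma\phi_\sigma^0 .
\]
The term $\mathcal E_\sigma\phi_\sigma^0$ is exponentially small by \eqref{eq:E-r-unweighted-small}. This uses $\|\phi_\sigma^0\|_{Y^a_\sigma}\le\mathcal P(\bar\sigma)\|g\|_{Y_R'}$, which follows from Corollary~\ref{cor:exact-physical-family-bound}: its tail decay $e^{-c_0\bar\sigma}$ beats the weight $W_\sigma=e^{a\bar\sigma}$ because $a<c_0$. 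Applying the inverse (bounded by $\mathcal P(\bar\sigma)$ via Corollary~\ref{cor:exact-bie-stability} and the Neumann series) gives
\[
  \|\phi_\sigma^{\rm ni}-\phi_\sigma^0\|_{Y_R}\le\mathcal P(\bar\sigma)e^{-c'\bar\sigma}\|g\|_{Y_R'} .
\]

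\textbf{Field error.} For $x\in K\Subset\Omega_p$, compare \eqref{eq:nearest-image-field} with the exact reconstruction \eqref{eq:exact-reconstruction}. The kernel difference is $G_{\sigma,N}^0-\Phi_\sigma-G^{\rm near}_\sigma$ (with $G^{\rm near}_\sigma$ extended to $x\in K$ as the nearest reflection). Each of these reflected terms is a term $(\ell,\delta)\neq(0,0)$ in the image series, so by \eqref{eq:obs-kernel-small} it is $O(e^{-\alpha_m\bar\sigma})$ in $C^m(K\times\Gamma_R)$. Hence $u_\sigma-u^{\rm ni}_\sigma$ on $K$ splits into two pieces:
\begin{itemize}
\item a kernel-difference piece acting on $(g,\phi_\sigma^0)$, which is exponentially small;
\item the full kernel $\Phi_\sigma+G_\sigma^{\rm near}$, bounded polynomially on $K\times\Gamma_R$, acting on $\phi^{\rm ni}_\sigma-\phi^0_\sigma$, which is exponentially small by the density estimate.
\end{itemize}
Combining with Theorem~\ref{thm:exact-conv} via the triangle inequality gives \eqref{eq:nearest-image-total-field-error}, where $c_{\rm ni}$ is the minimum of the rates after absorbing polynomial factors.

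\textbf{Main obstacle.} The delicate step is the first one: ensuring that Lemma~\ref{lem:Ainv-E-small} actually controls $\mathcal E_\sigma$ on all of $\Gamma_R\times\Gamma_R$. Near the corners where $\Gamma_R$ meets the vertical Neumann boundaries, only the nearest image is singular, and other reflections such as the double/corner images may approach as well. To handle this I would rely on the weighted norm, which forces densities to be $e^{-a\bar\sigma}$-small on $\Gamma_{\rm im}$. If necessary, I would enlarge $\mathcal B^{\rm im}_\sigma$ to include any further images that stay close, and check that they remain localized in the form $\chi_{\rm im}\cdot\chi_{\rm im}$.
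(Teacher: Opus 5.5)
Your proposal is correct and follows the paper's proof essentially step for step. It uses a Neumann series for $I-(\mathcal A_{\sigma,N}^0)^{-1}\mathcal E_\sigma$ from Lemma~\ref{lem:Ainv-E-small}, an exponentially small density error driven by $(\mathcal S_\sigma^c-\mathcal S_{\sigma,N}^0)g+\mathcal E_\sigma\phi_\sigma^0$, and the field comparison on $K$ via Proposition~\ref{prop:reflected-kernel-estimate} and Theorem~\ref{thm:exact-conv}. Your use of \eqref{eq:exact-physical-family-tail-small} with $a<c_0$ to control $\|\phi_\sigma^0\|_{Y_\sigma^{a}}$ is a bit more careful than the paper, which cites \eqref{eq:exact-physical-unweighted} alone.

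The corner worry in your last paragraph is not an obstacle. Apart from the nearest vertical reflection, every image in \eqref{eq:neumann-image-series} is displaced by a nonzero multiple of $2A_1$ or $2A_2$, whose imaginary parts are of order $\bar\sigma$. That is why the paper can put all of them in the exponentially small remainder $\mathcal E_\sigma$ without enlarging $\mathcal B_\sigma^{\rm im}$.
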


\begin{proof}
Since \(\mathcal A_{\sigma,N}^0-\mathcal A_\sigma^{\rm ni}=\mathcal E_\sigma\), subtracting
\(\mathcal A_{\sigma,N}^0\phi_\sigma^0=\mathcal S_{\sigma,N}^0g\) from
\eqref{eq:nearest-image-bie} gives, with
\(e_\sigma^{\rm ni}:=\phi_\sigma^{\rm ni}-\phi_\sigma^0\),
\[
  e_\sigma^{\rm ni}
  =(\mathcal A_{\sigma,N}^0)^{-1}\mathcal E_\sigma e_\sigma^{\rm ni}
  +(\mathcal A_{\sigma,N}^0)^{-1}
     \left\{(\mathcal S_\sigma^c-\mathcal S_{\sigma,N}^0)g
     +\mathcal E_\sigma\phi_\sigma^0\right\} .
\]
By Lemma~\ref{lem:Ainv-E-small}, the first operator on the right-hand side has norm smaller than \(1/2\) for large \(\bar\sigma\).  Hence \eqref{eq:nearest-image-bie} is uniquely solvable by the Neumann series.  The same estimate as in Lemma~\ref{lem:Ainv-E-small}, together with \eqref{eq:exact-physical-unweighted}, gives
\[
  \|e_\sigma^{\rm ni}\|_{Y_R}
  \le \mathcal P(\bar\sigma)e^{-c\bar\sigma}\|g\|_{Y_R'} .
\]
Comparing \eqref{eq:nearest-image-field} with \eqref{eq:exact-reconstruction}, and then applying Proposition~\ref{prop:reflected-kernel-estimate}, 
\[
  \|u_\sigma^{\rm ni}-u_\sigma\|_{H^1(K)}
  \le \mathcal P(\bar\sigma)e^{-c\bar\sigma}\|g\|_{Y_R'} .
\]
Combining this estimate with Theorem~\ref{thm:exact-conv}  proves \eqref{eq:nearest-image-total-field-error}.
\end{proof}

From Theorem~\ref{thm:nearest-image-corrected-conv}, the nearest-image
corrected equation \(\mathcal A_\sigma^{\rm ni}\phi_\sigma^{\rm ni}=\mathcal
S_\sigma^c g\) already provides a solution that exponentially converges to the
true solution $u$ in the physical region $\Omega_p$. Compared with the computable PML-BIE
\eqref{eq:computable-bie2}, the extra kernel in ${\cal B}_\sigma^{\rm im}$ is
equally simple, introduces no complicated integrands, and requires no additional numerical effort. Thus the analysis could stop here if one were willing to use the nearest-image corrected BIE \eqref{eq:nearest-image-bie}. Nevertheless, this intermediate equation contains a finite-boundary image
term and therefore is not a direct truncation of the associated complex-scaled BIE in Section~\ref{sec:cs-bie}.  For this reason we keep going to remove ${\cal B}_\sigma^{\rm im}$.  By Lemma~\ref{lem:exact-inverse-identity}, 
\[
(\mathcal
A_{\sigma,N}^0)^{-1}\mathcal B_\sigma^{\rm im}
= \mathcal B_\sigma^{\rm im} - Q_\sigma  \mathcal B_\sigma^{\rm im}.
\] 
The next two subsections are devoted to estimate $ \mathcal B_\sigma^{\rm im} $ and $ Q_\sigma  \mathcal B_\sigma^{\rm im}$ separately.

\subsection{\texorpdfstring{Estimate of ${\cal B}_\sigma^{\rm im}$}{}}\label{subsec:computable-splitting}

We now return to the original stretched-kernel equation
\eqref{eq:computable-bie2}. Before proceeding, we prove the following technical lemma first.
\begin{lemma}\label{lem:endpoint-Hankel}
Let
\begin{equation}\label{eq:Hankel-endpoint-operator}
  (\mathcal H_\mu q)(s):=\theta(s)\int_0^\ell H_0^{(1)}(\mu(s+s'))\theta(s')q(s')\,ds',
  \qquad \mu=k\alpha_1,
\end{equation}
where \(\ell>0\) is fixed, \(\theta\in C_0^\infty([0,\ell))\) is fixed, and \(|\mu|\eqsim \Im\mu\eqsim \bar\sigma\).  Then
\begin{equation}\label{eq:endpoint-Hankel-Hhalf}
  \|\mathcal H_\mu q\|_{H^{1/2}(0,\ell)}
  \le C\bar\sigma^{-1/2}\|q\|_{H^{1/2}(0,\ell)} .
\end{equation}
\end{lemma}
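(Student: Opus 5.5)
The kernel $H_0^{(1)}(\mu(s+s'))$ depends only on $s+s'$. Since $\Im\mu\eqsim\bar\sigma$, it is concentrated in a corner layer of width $O(\bar\sigma^{-1})$ at $s=s'=0$. The plan is to bound $\mathcal H_\mu$ separately on $L^2(0,\ell)$ and on $H^1(0,\ell)$, and then interpolate. This uses $H^{1/2}(0,\ell)=[L^2(0,\ell),H^1(0,\ell)]_{1/2}$ on the fixed interval, with constants independent of $\bar\sigma$. The target bounds are
\[
\|\mathcal H_\mu\|_{L^2\to L^2}\le C\bar\sigma^{-1},\qquad
\|\mathcal H_\mu\|_{H^1\to H^1}\le C\bar\sigma^{\gamma}\quad\text{with } \gamma\le 0 .
\]
Interpolation then gives $\|\mathcal H_\mu\|_{H^{1/2}\to H^{1/2}}\le C\bar\sigma^{(-1+\gamma)/2}\le C\bar\sigma^{-1/2}$, which is (\ref{eq:endpoint-Hankel-Hhalf}).

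\emph{Step 1 ($L^2$ bound).} Write $\mu=\bar\sigma\omega$, where $|\omega|\le C$ and $\Im\omega\ge c>0$ by the hypothesis $|\mu|\eqsim\Im\mu\eqsim\bar\sigma$. For a kernel $f(s+s')$, Schur's test gives an $L^2$ operator norm at most $\|f\|_{L^1(0,2\ell)}$. Changing variables $t=\bar\sigma r$,
\[
\int_0^{2\ell}|H_0^{(1)}(\mu r)|\,dr\le \bar\sigma^{-1}\int_0^\infty |H_0^{(1)}(\omega t)|\,dt\le C\bar\sigma^{-1}.
\]
The last integral is finite uniformly in $\omega$: near $t=0$ the singularity is only logarithmic, and for $t\ge\rho_0$ the bound (\ref{eq:Greendecay}) gives decay like $e^{-ct}$. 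The bounded cutoffs $\theta$ do not affect this.

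\emph{Step 2 ($H^1$ bound).} Differentiating $\theta(s)\int\cdots$ produces a term with $\theta'$, which is again controlled by Step 1. The main term is $\theta(s)\int_0^\ell\partial_sH_0^{(1)}(\mu(s+s'))\,\theta(s')q(s')\,ds'$. I will use $\partial_sH_0^{(1)}(\mu(s+s'))=\partial_{s'}H_0^{(1)}(\mu(s+s'))$ and integrate by parts in $s'$. Since $\theta\in C_0^\infty([0,\ell))$, the endpoint $s'=\ell$ contributes nothing. What remains is a Hankel-kernel term applied to $(\theta q)'$, bounded in $L^2$ by $C\bar\sigma^{-1}\|q\|_{H^1}$ via Step 1, plus a boundary term $-\theta(s)H_0^{(1)}(\mu s)\theta(0)q(0)$. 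For the boundary term, $|q(0)|\le C\|q\|_{H^1}$, and scaling gives
\[
\|H_0^{(1)}(\mu\,\cdot)\|_{L^2(0,\ell)}\le C\bar\sigma^{-1/2}\Big(\int_0^\infty|H_0^{(1)}(\omega t)|^2dt\Big)^{1/2}\le C\bar\sigma^{-1/2},
\]
since the logarithmic singularity is square integrable. This yields $\|\mathcal H_\mu\|_{H^1\to H^1}\le C\bar\sigma^{-1/2}$. Interpolating with Step 1 gives $\bar\sigma^{-3/4}$, which is even better than the claim.

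\emph{Expected obstacle.} The delicate point is the corner $s=s'=0$. There the differentiated kernel behaves like $(s+s')^{-1}$, and the boundary term at $s'=0$ is not small in any uniform sense. One must also verify that the Hankel bounds hold uniformly in the complex direction $\omega$ and uniformly near $t=0$, where (\ref{eq:Greendecay}) is not stated. For the latter I will use the small-argument expansion $H_0^{(1)}(z)=\frac{2\ii}{\pi}\log z+O(1)$ on $|z|\le\rho_0$.

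\emph{Fallback.} If the integration by parts is troublesome, I will instead bound $|\mu H_1^{(1)}(\mu(s+s'))|\le C(s+s')^{-1}$ directly and invoke Hilbert's inequality: the Carleman kernel $(s+s')^{-1}$ is bounded on $L^2(0,\infty)$ with norm $\pi$. This gives $\gamma=0$, and interpolation with the $\bar\sigma^{-1}$ $L^2$ bound still produces exactly $\bar\sigma^{-1/2}$.
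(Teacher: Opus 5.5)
Your proposal is correct and follows essentially the same route as the paper. Both arguments prove an $O(\bar\sigma^{-1})$ bound on $L^2(0,\ell)$ (you use Schur's test with the $L^1$ norm of the kernel, the paper uses the Hilbert--Schmidt norm), then an $H^1$ bound obtained by integrating by parts in $s'$ and controlling the endpoint term $\theta(s)H_0^{(1)}(\mu s)\theta(0)q(0)$, then interpolate between $L^2$ and $H^1$. Your refinement that this endpoint term is actually $O(\bar\sigma^{-1/2})$ in $L^2$, giving the stronger rate $\bar\sigma^{-3/4}$, is also valid: the paper's estimate contains the same factor $M^{-1}$ but discards it and uses only a uniform $H^1$ bound.
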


\begin{proof}
Put $M:=|\mu|$,$\Im\mu\ge cM$, and $K(t):=H_0^{(1)}(\mu t).$
We first estimate the kernel in \(L^2((0,\ell)^2)\).  The standard Hankel bounds in the sector \(\Im(\mu t)\ge cMt\) give
\begin{align}
  |H_0^{(1)}(\mu t)|&\le C(1+|\log(Mt)|), &&0<t\le M^{-1},
    \label{eq:Hankel-small-bound}\\
  |H_0^{(1)}(\mu t)|&\le C(Mt)^{-1/2}e^{-cMt}, &&t\ge M^{-1}.
    \label{eq:Hankel-large-bound}
\end{align}
Since \(s+s'=t\) has one-dimensional measure at most \(t\),
\begin{align}
  \int_0^\ell\int_0^\ell |K(s+s')|^2\,ds'\,ds
  &\le C\int_0^{2\ell} t |H_0^{(1)}(\mu t)|^2\,dt                       \notag\\
  &\le CM^{-2}\int_0^2 \rho(1+|\log\rho|)^2\,d\rho
      +CM^{-2}\int_1^\infty e^{-2c\rho}\,d\rho \le CM^{-2} .
  \label{eq:Hankel-HS-direct}
\end{align}
Hence
\begin{equation}\label{eq:Hankel-L2-L2}
  \|\mathcal H_\mu q\|_{L^2(0,\ell)}
  \le CM^{-1}\|q\|_{L^2(0,\ell)} .
\end{equation}

It remains to obtain a uniform \(H^1\)-bound.  Set \(a(s'):=\theta(s')q(s')\).  For \(q\in H^1(0,\ell)\),
\begin{align}
  \partial_s(\mathcal H_\mu q)(s)
  &=\theta'(s)\int_0^\ell K(s+s')a(s')\,ds'
    +\theta(s)\int_0^\ell K'(s+s')a(s')\,ds'.
  \label{eq:Hankel-derivative-start}
\end{align}
Integration by parts,
\begin{equation}\label{eq:Hankel-int-by-parts}
  \int_0^\ell K'(s+s')a(s')\,ds'
  =-K(s)a(0)-\int_0^\ell K(s+s')a'(s')\,ds'.
\end{equation}
The two integral terms in \eqref{eq:Hankel-derivative-start}--\eqref{eq:Hankel-int-by-parts} are estimated by \eqref{eq:Hankel-L2-L2}.
For the endpoint term, \eqref{eq:Hankel-small-bound}--\eqref{eq:Hankel-large-bound} imply
\begin{align}
  \|K(\cdot)a(0)\|_{L^2(0,\ell)}^2
  \le C|a(0)|^2\int_0^\ell |H_0^{(1)}(\mu s)|^2\,ds
  \le CM^{-1}|a(0)|^2
  \le C\|q\|_{H^1(0,\ell)}^2 .
  \label{eq:Hankel-boundary-term}
\end{align}
Combining \eqref{eq:Hankel-derivative-start}--\eqref{eq:Hankel-boundary-term},
\begin{equation}\label{eq:Hankel-H1H1}
  \|\mathcal H_\mu q\|_{H^1(0,\ell)}
  \le C\|q\|_{H^1(0,\ell)} .
\end{equation}
Since \(M\eqsim \bar\sigma\), interpolation of \eqref{eq:Hankel-L2-L2} and \eqref{eq:Hankel-H1H1} gives
\eqref{eq:endpoint-Hankel-Hhalf}.
\end{proof}

We show the algebraic smallness of ${\cal B}_\sigma$ below.
\begin{lemma}\label{lem:Bim-unweighted}\label{lem:Bim-family}
The operator ${\cal B}_\sigma$ satisfies
\begin{equation}\label{eq:Bim-unweighted}
  \|\mathcal B_{\sigma}^{\rm im}\|_{Y_R\to Y_R}
  \le C\bar\sigma^{-1/2} .
\end{equation}
Moreover, for any \(0<a<c_0\),
\begin{equation}\label{eq:Bim-family-bound}
  \|\mathcal B_\sigma^{\rm im}\mathbf h\|_{\mathfrak Y^{a}}
  \le C\bar\sigma_0^{-1/2}\|\mathbf h\|_{\mathfrak Y^{a}},
  \qquad
  \mathcal B_\sigma^{\rm im}\mathbf h\in\mathfrak Y_{\rm im}^{a} .
\end{equation}
\end{lemma}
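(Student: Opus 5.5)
The plan is to reduce both claims to Lemma~\ref{lem:endpoint-Hankel}, applied componentwise on $\Gamma_{\rm im}^L$ and $\Gamma_{\rm im}^R$.

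\textbf{Step 1: reduction to the model operator.} By \eqref{eq:B-im-def}, $\mathcal K_\sigma^{\rm im}=0$, so $\mathcal B_\sigma^{\rm im}=\mathcal S_\sigma^{\rm im}M_\beta$. On the flat PML segments, \eqref{eq:beta-assump} gives $M_\beta h=-Zh$. By \eqref{eq:Bim-localized}, only $\chi_{\rm im}h$ enters, so
\[
\mathcal B_\sigma^{\rm im}h=-Z\,\mathcal S_\sigma^{\rm im}(\chi_{\rm im}h).
\]
On $\Gamma_{\rm im}^\iota$ we have $n=n'=0$. The kernel in \eqref{eq:G-near-def} then reduces to
\[
\Phi_k\big((\alpha_1 s,0),(-\alpha_1 s',0)\big)=\tfrac{\ii}{4}H_0^{(1)}\big(k\alpha_1(s+s')\big),
\]
where we use the branch $\sqrt{\alpha_1^2(s+s')^2}=\alpha_1(s+s')$, which is consistent with the stretched distance defining $\Phi_\sigma$. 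Hence, in the local coordinate $s$ on each component, the restriction of $\mathcal S_\sigma^{\rm im}$ is $\frac{\ii}{4}\mathcal H_\mu$ with $\mu=k\alpha_1$, $\theta=\chi_{\rm im}^\iota$, and $\ell$ the fixed length of the segment. Since $\alpha_1=1+\ii\bar\sigma/d_1$ with $d_1$ fixed, we have $|\mu|\eqsim\Im\mu\eqsim\bar\sigma$, so the hypotheses of Lemma~\ref{lem:endpoint-Hankel} hold.

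\textbf{Step 2: the unweighted bound \eqref{eq:Bim-unweighted}.} Lemma~\ref{lem:endpoint-Hankel} gives $\|\mathcal S_\sigma^{\rm im}(\chi_{\rm im}h)\|_{H^{1/2}}\le C\bar\sigma^{-1/2}\|h|_{\Gamma_{\rm im}^\iota}\|_{H^{1/2}}$ on each component. We then need to pass between $H^{1/2}$ of the local segment and $Y_R=H^{1/2}(\Gamma_R)$. Restriction to a subarc is bounded. Because the output carries the cutoff $\chi_{\rm im}$, which is smooth on $\Gamma_R$ with support contained in $\Gamma_{\rm im}$, extension by zero away from the support (vanishing near the interior endpoint) is also bounded $H^{1/2}\to Y_R$. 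Note that $\theta$ need not vanish at $s=0$, the arc endpoint, and this is exactly what Lemma~\ref{lem:endpoint-Hankel} allows. The cross terms between $L$ and $R$ vanish by definition. Summing the two components gives \eqref{eq:Bim-unweighted}.

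\textbf{Step 3: the weighted family bound \eqref{eq:Bim-family-bound}.} Membership in $\mathfrak Y^a_{\rm im}$ is immediate from \eqref{eq:Bim-localized}, since $\chi_{\rm im}\mathcal B_\sigma^{\rm im}h=\mathcal B_\sigma^{\rm im}h$ only if $\chi_{\rm im}^2$ acts like $\chi_{\rm im}$ on the range. If this fails for a general cutoff, I would choose $\chi_{\rm im}\equiv1$ on a neighborhood of the support of the inner cutoff, or simply read the definition with the cutoffs as in \eqref{eq:Bim-localized}. This is the one step needing care, and I would try the nested-cutoff fix first.

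For the norm, write $\|\mathcal B h\|_{Y^a_\sigma}=\|\mathcal B h\|_{Y_R}+W_\sigma\|\chi_{\rm im}\mathcal B h\|_{Y_R}$. Since the input passes through $\chi_{\rm im}$, both terms are controlled by Step 2 applied to $\chi_{\rm im}h$, with $\chi_{\rm im}$ acting as a bounded multiplier:
\[
\|\mathcal B h\|_{Y^a_\sigma}\le C\bar\sigma^{-1/2}(1+W_\sigma)\|\chi_{\rm im}h\|_{Y_R}\le C\bar\sigma^{-1/2}\|h\|_{Y^a_\sigma}.
\]
Here the weight on the output is absorbed by the weighted part $W_\sigma\|\chi_{\rm im}h\|_{Y_R}$ of the input norm. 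Using $\bar\sigma^{-1/2}\le\bar\sigma_0^{-1/2}$ and taking the supremum over $\bar\sigma\ge\bar\sigma_0$ yields \eqref{eq:Bim-family-bound}. The main obstacle is already handled by Lemma~\ref{lem:endpoint-Hankel}, namely the $\bar\sigma^{-1/2}$ gain at the endpoint, where $H_0^{(1)}(\mu s)$ is not small.
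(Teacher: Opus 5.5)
Your proposal is correct and follows the paper's proof. You identify the nearest-image kernel on each flat PML segment as $\tfrac{\ii}{4}H_0^{(1)}(k\alpha_1(s+s'))$, apply Lemma~\ref{lem:endpoint-Hankel} with $\theta=\chi_{\rm im}$ and $\mu=k\alpha_1$, and use the localization \eqref{eq:Bim-localized} so that the output weight $W_\sigma$ is absorbed by the term $W_\sigma\|\chi_{\rm im}h_\sigma\|_{Y_R}$ of the input norm. Your caveat is also valid: \eqref{eq:Bim-localized} literally requires $\chi_{\rm im}^2=\chi_{\rm im}$, which the paper uses without comment. Your nested-cutoff fix repairs this: use a kernel cutoff $\theta$ and take $\chi_{\rm im}\equiv1$ on $\operatorname{supp}\theta$, so that $\theta h=\theta\chi_{\rm im}h$. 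This gives both the membership in $\mathfrak Y^{a}_{\rm im}$ and the bound in terms of $\|\chi_{\rm im}h\|_{Y_R}$.
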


\begin{proof}
In one PML segment, \(s,s'\ge0\) denote distances from the vertical boundary along the flat boundary.  The reflected tangential distance is
\begin{equation}\label{eq:nearest-image-distance}
  \zeta_\sigma(s,s')=\alpha_1(s+s') .
\end{equation}
Thus the kernel of ${\cal S}_\sigma^{\rm im}$ is 
\begin{equation}\label{eq:single-image-kernel}
  K^S_\sigma(s,s')=\chi_{\rm im}(s)\chi_{\rm im}(s')H_0^{(1)}(k\alpha_1(s+s')) .
\end{equation}
Applying Lemma~\ref{lem:endpoint-Hankel} with \(\theta=\chi_{\rm im}\) and with \(\ell\) chosen so that \( [0,\ell)\) contains \(\operatorname{supp}\chi_{\rm im}\),
\begin{equation}\label{eq:single-image-Hhalf}
  \|\mathcal S_{\sigma}^{\rm im}q\|_{H^{1/2}}
  \le C\bar\sigma^{-1/2}\|q\|_{H^{1/2}} .
\end{equation}
This together with \eqref{eq:Bim-localized} implies
\begin{equation}\label{eq:Bim-final-unweighted}
  \|\mathcal B_{\sigma}^{\rm im}\phi\|_{Y_R}
  =\|\mathcal S_{\sigma}^{\rm im}M_\beta\chi_{\rm im}\phi\|_{Y_R}
  \le C\bar\sigma^{-1/2}\|\chi_{\rm im}\phi\|_{Y_R}
  \le C\bar\sigma^{-1/2}\|\phi\|_{Y_R} .
\end{equation}
This proves \eqref{eq:Bim-unweighted}.  The inclusion in \(\mathfrak Y_{\rm im}^{a}\) follows from the sharper localized form of \eqref{eq:Bim-final-unweighted}:
\begin{align}
  W_\sigma\|\chi_{\rm im}\mathcal B_{\sigma}^{\rm im}h_\sigma\|_{Y_R}
  =W_\sigma\|\mathcal B_{\sigma}^{\rm im}\chi_{\rm im}h_\sigma\|_{Y_R} 
  \le C\bar\sigma^{-1/2}W_\sigma\|\chi_{\rm im}h_\sigma\|_{Y_R} .
  \label{eq:Bim-weight-cancel}
\end{align}
Taking \(\sup_{\bar\sigma\ge\bar\sigma_0}\) proves \eqref{eq:Bim-family-bound}.
\end{proof}

\subsection{\texorpdfstring{Estimate of $Q_\sigma\mathcal B_{\sigma}^{\rm im}$}{}}\label{subsec:QBim-estimate}

Suppose \(\mathbf h\in\mathfrak Y^{a}\) and we work for each fixed \(\bar\sigma\) in the following.  Set
\begin{equation}\label{eq:QBim-qf-def}
  q_\sigma:=\chi_{\rm im}h_\sigma,
  \qquad
  f_\sigma:=\mathcal B_\sigma^{\rm im}h_\sigma .
\end{equation}
By \eqref{eq:Bim-localized} and Lemma~\ref{lem:Bim-unweighted},
\begin{equation}\label{eq:QBim-f-size}
  f_\sigma=\chi_{\rm im}f_\sigma,
  \qquad
  \|f_\sigma\|_{Y_R}\le C\bar\sigma^{-1/2}\|q_\sigma\|_{Y_R} .
\end{equation}

We next fix the separation geometry used for the weighted estimate.  Choose a
smooth function \(\eta=\eta(s)\) in the local coordinate $s$ such that $\eta=1$
on a neighborhood of supp $\chi_{\rm im}$, and that supp $\eta$ lies within
$\Sigma_{\rm pml}$.  We choose it so that \(\eta'\) is supported away from the
vertical boundaries and from \(\operatorname{supp}\chi_{\rm im}\).  Set
\begin{equation}\label{eq:QBim-separated-region}
  \mathcal C_b^\pm:=\Omega_R^\pm\cap\operatorname{supp}(\eta') ,
  \qquad
  d_b:=\operatorname{dist}_s(\operatorname{supp}\chi_{\rm im},\operatorname{supp}\eta')>0 .
\end{equation}
Since \(\operatorname{supp}\eta'\) lies in $\Sigma_{\rm pml}$, crossing from
\(\operatorname{supp}\chi_{\rm im}\) to \(\mathcal C_b^\pm\) gives a fixed
stretched distance.  Let
\begin{equation}\label{eq:QBim-flat-pair}
  \mathbf U_{\sigma,t}:=(U_{\sigma,t}^-,U_{\sigma,t}^+)
  :=\big(u_{\sigma,t}^-(f_\sigma),\,v_{\sigma,t}^+(T_{\sigma,t}^-f_\sigma)\big).
\end{equation}
Then $Q_{\sigma,t}f_\sigma=\gamma_+U_{\sigma,t}^+$.
The following two lemmas estimate the local difference between $Q_\sigma$ and $Q_{\sigma,t}$.
\begin{lemma}\label{lem:QBim-correction-separated}
The correction operator $Q_\sigma-Q_{\sigma,t}$ satisfies
\begin{equation}\label{eq:QBim-correction-by-U}
  \|\chi_{\rm im}(Q_\sigma-Q_{\sigma,t})f_\sigma\|_{Y_R}
  \le \mathcal P(\bar\sigma)
       \sum_\pm\|U_{\sigma,t}^\pm\|_{H^1(\mathcal C_b^\pm)} .
\end{equation}
\end{lemma}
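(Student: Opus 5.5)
The plan is a cutoff-and-compare argument. I transplant the half-line model pair $\mathbf U_{\sigma,t}$ into $\Omega_R^\pm$ using the cutoff $\eta$. I then show that the one-sided problems defining $Q_\sigma=R_\sigma^+T_\sigma^-$ differ from the transplanted model only by sources supported in $\mathcal C_b^\pm$. Each such discrepancy is absorbed by one of the polynomially stable one-sided solvers already available, namely Lemma~\ref{lem:aux-mixed-unique} and \eqref{eq:forced-robin-pml-est} of Theorem~\ref{thm:pml-pde}.

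\textbf{Step 1: transplant the model.} Near each component of $\Gamma_{\rm im}$, in the local coordinates $(s,n)$, the coefficients satisfy $\alpha_2=1$, since $\sigma_2=0$ for $|x_2|\le L_2$, and $\alpha_1$ is constant. Hence $\mathcal L_\sigma$ coincides with $\mathcal L_{\sigma,t}$ there, and the vertical Neumann wall is $s=0$. Set $W^\pm:=\eta U_{\sigma,t}^\pm$ in $\Omega_R^\pm$, extended by zero. Because $\eta$ depends only on $s$, the conditions $\partial_s\eta=0$ at $s=0$ and $\partial_sU^\pm_{\sigma,t}=0$ at $s=0$ preserve the homogeneous conormal condition on the vertical boundary. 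I will need to check what happens at the horizontal Neumann boundaries at $n=\pm(L_2+d_2)$, where the model has no wall and where $\alpha_2\neq1$. The first thing I would try is an additional cutoff $\psi(n)$ with $\psi=1$ near $n=0$ and $\operatorname{supp}\psi\subset\{|n|<L_2\}$. The resulting extra commutator is supported where the model solution is already separated from $\Gamma_{\rm im}$, so it can be counted with the $\mathcal C_b^\pm$ terms, reading $\mathcal C_b^\pm$ as the full separated collar. Then
\[
  \mathcal L_\sigma W^\pm=[\mathcal L_\sigma,\eta]U_{\sigma,t}^\pm=:F^\pm,\qquad
  \|F^\pm\|_{H^{-1}_0}\le \mathcal P(\bar\sigma)\|U_{\sigma,t}^\pm\|_{H^1(\mathcal C_b^\pm)},
\]
where the factor $|\alpha_1|^{\pm1}\lesssim\bar\sigma$ from the coefficients goes into $\mathcal P(\bar\sigma)$. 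On $\Gamma_R$ the traces are as follows. First, $\gamma_-W^-=\eta f_\sigma=f_\sigma$, because $\eta=1$ on $\operatorname{supp}\chi_{\rm im}\supset\operatorname{supp}f_\sigma$. Second, the lower Robin trace of $W^-$ is $\eta T^-_{\sigma,t}f_\sigma$. Third, the upper Robin trace of $W^+$ is also $\eta T^-_{\sigma,t}f_\sigma$. Finally, $\chi_{\rm im}\gamma_+W^+=\chi_{\rm im}Q_{\sigma,t}f_\sigma$.

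\textbf{Step 2: lower side.} Put $z^-:=u^-_\sigma(f_\sigma)-W^-$. It solves the mixed problem of Lemma~\ref{lem:aux-mixed-unique} on $D=\Omega_R^-$ with Dirichlet datum $0$ and source $-F^-$. Therefore
\[
  T_\sigma^-f_\sigma=\eta T^-_{\sigma,t}f_\sigma+\partial_{\nu_\sigma,-}z^-,\qquad
  \|\partial_{\nu_\sigma,-}z^-\|_{Y_R'}\le\mathcal P(\bar\sigma)\|U^-_{\sigma,t}\|_{H^1(\mathcal C_b^-)}.
\]

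\textbf{Step 3: upper side.} Put $z^+:=v^+_\sigma(T^-_\sigma f_\sigma)-W^+$. It solves the forced Neumann PML problem \eqref{eq:forced-robin-pml-pde}--\eqref{eq:forced-robin-pml-boundary} with source $-F^+$ and Robin datum $\partial_{\nu_\sigma,-}z^-$. The estimate \eqref{eq:forced-robin-pml-est} then bounds $\|\gamma_+z^+\|_{Y_R}$ by $\mathcal P(\bar\sigma)\sum_\pm\|U^\pm_{\sigma,t}\|_{H^1(\mathcal C_b^\pm)}$. Since
\[
  \chi_{\rm im}(Q_\sigma-Q_{\sigma,t})f_\sigma=\chi_{\rm im}\gamma_+z^+,
\]
and multiplication by the fixed smooth $\chi_{\rm im}$ is bounded on $Y_R$, this gives \eqref{eq:QBim-correction-by-U}.

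\textbf{Main obstacle.} The delicate step is Step 1. The model lives on half-planes, while $\Omega_R^\pm$ has horizontal walls and a vertical $\alpha_2$-stretching. Moreover, the commutator must be controlled purely by $H^1$ norms of $U^\pm_{\sigma,t}$ on the separated set, including the conormal (Robin) trace at $n=0$ inside $\operatorname{supp}\eta'$, which must be absorbed in the $H^{-1}_0$ duality sense. I would handle this by writing $F^\pm$ variationally as
\[
  v\mapsto\int\big(A_\sigma\nabla\eta\cdot\nabla U^\pm\,\bar v-U^\pm A_\sigma\nabla\eta\cdot\nabla\bar v\big),
\]
which involves only $U^\pm$ and $\nabla U^\pm$ on $\operatorname{supp}\eta'$. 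This also avoids any boundary term on $\Gamma_R$.
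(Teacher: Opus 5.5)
Your Steps~2--3 are the paper's proof. Your pair $(z^-,z^+)$ is exactly the paper's $\mathbf W_{\sigma,b}=\mathbf U_\sigma-\eta\mathbf U_{\sigma,t}$. The paper closes the same way: the forced mixed estimate \eqref{eq:aux-mixed-bound} on the lower side, then the forced Robin estimate \eqref{eq:forced-robin-pml-est} on the upper side. Your worry in Step~1 is also legitimate. The paper takes $\eta=\eta(s)$ only, so $\mathcal C_b^\pm$ is the full-height strip over $\operatorname{supp}\eta'$. It then treats $\mathcal L_\sigma(\eta U_{\sigma,t}^\pm)$ as a pure commutator and never discusses the layer $|n|>L_2$ (where $\alpha_2\neq1$) or the horizontal Neumann walls.

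The gap is in your remedy. The cutoff $\psi(n)$ creates a commutator on the band $\{s\in\operatorname{supp}\eta,\ |n|\in\operatorname{supp}\psi'\}$, which lies directly above and below $\operatorname{supp}f_\sigma$ at zero tangential distance. Absorbing it by ``reading $\mathcal C_b^\pm$ as the full separated collar'' changes the right-hand side, so you do not prove \eqref{eq:QBim-correction-by-U}.

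The modified inequality is also useless downstream. In Lemma~\ref{lem:QBim-separated-fields}, the only thing that makes $U_{\sigma,t}^\pm$ exponentially small on $\mathcal C_b^\pm$ is the complex stretching in $s$ across the fixed tangential gap $d_b$. Euclidean separation in the unstretched $n$-direction gives no $\bar\sigma$-decay. Concretely, a cosine mode of the model depends on $n$ through $e^{-\kappa(\xi)|n|}$, with $\kappa=\tau_\sigma/\alpha_1=(\xi^2\alpha_1^{-2}-k^2)^{1/2}$. Here $\kappa(0)=-\ii k$ and $\operatorname{Re}\kappa(\xi)=O(\bar\sigma^{-1})$ for $\xi\lesssim\bar\sigma$. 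Up to exponentially small tails, $f_\sigma$ is concentrated within $O(\bar\sigma^{-1})$ of $s=0$, so its cosine transform is spread over exactly this range. These modes therefore cross $|n|<L_2$ essentially undamped. Hence $\|U_{\sigma,t}^\pm\|_{H^1(\text{band})}$ is generically comparable to $f_\sigma$, i.e.\ only algebraically small. After the factor $\mathcal P(\bar\sigma)$ it cannot produce the $e^{-c_b\bar\sigma}$ needed in \eqref{eq:QBim-return-small}.

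A repair that relies only on tangential separation is to continue the model field analytically in $n$. On $\operatorname{supp}\eta$, replace $U_{\sigma,t}^\pm(s,n)$ by $U_{\sigma,t}^\pm(s,F_{\sigma,2}(n))$. This function:
\begin{itemize}
\item agrees with $U_{\sigma,t}^\pm$ for $|n|\le L_2$, so all your trace identities on $\Gamma_R$ survive;
\item solves the true $\mathcal L_\sigma$ on the whole strip and keeps $\partial_s=0$ at $s=0$;
\item has exponentially small conormal trace on the horizontal walls, since $\Im F_{\sigma,2}=\pm\bar\sigma$ there.
\end{itemize}
The only remaining data are then the commutator on the $\eta'$-strip and an exponentially small wall datum. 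Both can be fed into the same two forced estimates, and that is what Lemma~\ref{lem:QBim-family} actually needs.
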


\begin{proof}
Let \(\mathbf U_\sigma=(U_\sigma^-,U_\sigma^+)\) denote the exact pair defining \(Q_\sigma f_\sigma\):
\begin{subequations}\label{eq:QBim-exact-pair}
\begin{align}
  \mathcal L_\sigma U_\sigma^-&=0,
  &\gamma_-U_\sigma^-&=f_\sigma,
  \label{eq:QBim-exact-lower}\\
  \mathcal L_\sigma U_\sigma^+&=0,
  &\partial_{\nu_\sigma,+}U_\sigma^+ +M_\beta\gamma_+U_\sigma^+
   &=\partial_{\nu_\sigma,-}U_\sigma^-+M_\beta f_\sigma .
  \label{eq:QBim-exact-upper}
\end{align}
\end{subequations}
Both fields satisfy the homogeneous Neumann condition on the artificial boundary.  Define
\begin{equation}\label{eq:QBim-W-def}
  \mathbf W_{\sigma,b}:=(W_{\sigma,b}^-,W_{\sigma,b}^+)
  :=\mathbf U_\sigma-\eta\mathbf U_{\sigma,t} .
\end{equation}
On \(\operatorname{supp}\chi_{\rm im}\), \(\eta=1\); therefore
\begin{equation}\label{eq:QBim-correction-identity}
  \chi_{\rm im}(Q_\sigma-Q_{\sigma,t})f_\sigma
  =\chi_{\rm im}\gamma_+W_{\sigma,b}^+ .
\end{equation}
We now compute the equation for \(\mathbf W_{\sigma,b}\).  In \(\operatorname{supp}\eta'\) the coefficients are the constant PML coefficients, and
\begin{equation}\label{eq:QBim-commutator}
  \mathcal L_\sigma(\eta U_{\sigma,t}^\pm)
  =\eta\mathcal L_\sigma U_{\sigma,t}^\pm
   +\alpha_1^{-1}\big(\eta''U_{\sigma,t}^\pm
          +2\eta'\partial_sU_{\sigma,t}^\pm\big)
  =\alpha_1^{-1}\big(\eta''U_{\sigma,t}^\pm
          +2\eta'\partial_sU_{\sigma,t}^\pm\big).
\end{equation}
Thus, with zero extension outside \(\mathcal C_b^\pm\), set
\begin{equation}\label{eq:QBim-F-def}
  F_{\sigma,b}^\pm
  :=-\alpha_1^{-1}\big(\eta''U_{\sigma,t}^\pm
          +2\eta'\partial_sU_{\sigma,t}^\pm\big).
\end{equation}
Then \(\mathbf W_{\sigma,b}\) satisfies
\begin{subequations}\label{eq:QBim-W-PDE}
\begin{align}
  \mathcal L_\sigma W_{\sigma,b}^- &=F_{\sigma,b}^- &&\hbox{in }\Omega_R^- ,
  \label{eq:QBim-W-lower-pde}\\
  \gamma_-W_{\sigma,b}^- &=0 &&\hbox{on }\Gamma_R ,
  \label{eq:QBim-W-lower-bc}\\
  \mathcal L_\sigma W_{\sigma,b}^+ &=F_{\sigma,b}^+ &&\hbox{in }\Omega_R,
  \label{eq:QBim-W-upper-pde}\\
  \partial_{\nu_\sigma,+}W_{\sigma,b}^+ +M_\beta\gamma_+W_{\sigma,b}^+
  &=\partial_{\nu_\sigma,-}W_{\sigma,b}^- &&\hbox{on }\Gamma_R .
  \label{eq:QBim-W-upper-bc}
\end{align}
\end{subequations}
We verify the two boundary conditions \eqref{eq:QBim-W-lower-bc} and \eqref{eq:QBim-W-upper-bc}. On one hand, \(\gamma_-W_{\sigma,b}^-=f_\sigma-\eta f_\sigma=0\).  On the other hand, since \(\eta\) depends only on the tangential coordinate $s$ and is constant in supp $\chi_{\rm im}$,
\begin{align}
\partial_{\nu_\sigma,+}W_{\sigma,b}^+ +M_\beta\gamma_+W_{\sigma,b}^+ &\quad=\partial_{\nu_\sigma,-}U_\sigma^-+M_\beta f_\sigma
 -\eta\big(\partial_{\nu_\sigma,+}U_{\sigma,t}^+
      +M_\beta\gamma_+U_{\sigma,t}^+\big)                                  \notag\\
&\quad=\partial_{\nu_\sigma,-}U_\sigma^-+M_\beta f_\sigma
 -\eta\big(\partial_{\nu_\sigma,-}U_{\sigma,t}^-+M_\beta f_\sigma\big)       \notag\\
&\quad=\partial_{\nu_\sigma,-}(U_\sigma^- -\eta U_{\sigma,t}^-)
 +(1-\eta)M_\beta f_\sigma
  =\partial_{\nu_\sigma,-}W_{\sigma,b}^- .
  \label{eq:QBim-coupling-check}
\end{align}
The last equality uses \((1-\eta)f_\sigma=0\).

By \eqref{eq:QBim-F-def}, 
\begin{equation}\label{eq:QBim-F-by-U}
  \sum_\pm\|F_{\sigma,b}^\pm\|_{H^{-1}_{0}(\Omega_R^\pm)}
  \le \mathcal P(\bar\sigma)
       \sum_\pm\|U_{\sigma,t}^\pm\|_{H^1(\mathcal C_b^\pm)} .
\end{equation}
Applying the forced mixed estimate \eqref{eq:aux-mixed-bound} to the lower
subproblem \eqref{eq:QBim-W-lower-pde}--\eqref{eq:QBim-W-lower-bc}, and then the
forced Robin estimate \eqref{eq:forced-robin-pml-est} to the upper subproblem
\eqref{eq:QBim-W-upper-pde}--\eqref{eq:QBim-W-upper-bc}, we obtain from
\eqref{eq:QBim-correction-identity} that
\begin{align}
  \|\chi_{\rm im}(Q_\sigma-Q_{\sigma,t})f_\sigma\|_{Y_R}
  \le \|W_{\sigma,b}^+\|_{H^1(\Omega_R^+)}\le \mathcal P(\bar\sigma)
       \sum_\pm\|U_{\sigma,t}^\pm\|_{H^1(\mathcal C_b^\pm)}.
\end{align}
\end{proof}

\begin{lemma}\label{lem:QBim-separated-fields}
The flat pair $\mathbf U_{\sigma,t}$ satisfies
\begin{equation}\label{eq:QBim-separated-U-small}
  \sum_\pm\|U_{\sigma,t}^\pm\|_{H^1(\mathcal C_b^\pm)}
  \le \mathcal P(\bar\sigma) e^{-c_b\bar\sigma}\|q_\sigma\|_{Y_R} .
\end{equation}
\end{lemma}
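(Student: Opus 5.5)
The plan is to represent the two flat half-line fields $U_{\sigma,t}^\pm$ explicitly through the cosine transform. This mirrors the proof of Lemma~\ref{lem:flat-symbol-ratio}. Separation of variables gives
\[
\widehat{U^-_{\sigma,t}}_c(\xi,n)=e^{\tau_\sigma(\xi)n}\,\widehat{f_\sigma}_c(\xi),\qquad
\widehat{U^+_{\sigma,t}}_c(\xi,n)=e^{-\tau_\sigma(\xi)n}\,\frac{\tau_\sigma(\xi)-Z}{-\tau_\sigma(\xi)-Z}\,\widehat{f_\sigma}_c(\xi).
\]
The branch and sign conventions are those fixed in \eqref{eq:tau-symbol-def}. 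The ratio is uniformly bounded by \eqref{eq:flat-ratio}.

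The decay does not come from the normal direction. It has to come from the tangential separation $d_b$ between $\operatorname{supp}\chi_{\rm im}\supset\operatorname{supp} f_\sigma$ and $\operatorname{supp}\eta'$, measured in stretched coordinates. I would therefore rewrite both fields as boundary convolutions in physical space rather than work purely on the symbol side. Write
\[
U^\pm_{\sigma,t}(s,n)=\int_0^\infty\big[G^\pm(s-s',n)+G^\pm(s+s',n)\big]\,h^\pm(s')\,ds'.
\]
Here $h^-=f_\sigma$ and $h^+=f_\sigma$ are composed with the bounded multiplier. The kernel $G^\pm$ is the even-reflected free-space flat-PML kernel, namely the inverse Fourier transform of $e^{\mp\tau_\sigma n}$ (or of $e^{\mp\tau_\sigma n}$ times the ratio).

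In stretched variables, $G^\pm(s,n)$ is a derivative of $H_0^{(1)}\big(k\sqrt{(\alpha_1 s)^2+\alpha_1^2n^2}\big)$ up to rescaling. The tangential argument carries the factor $\alpha_1=1+\ii\bar\sigma/d_1$. For $|s\mp s'|\ge d_b$, the Hankel bound \eqref{eq:Greendecay} therefore gives $|\partial^\mu G^\pm|\le \mathcal P(\bar\sigma)e^{-c k\bar\sigma d_b/d_1}$.

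The cleaner route for the $+$ field is not to invert the ratio directly. Instead I would note that $U_{\sigma,t}^+$ solves the upper Robin problem with datum $T^-_{\sigma,t}f_\sigma$. The pseudo-differential operator $T_{\sigma,t}^-$ has symbol $-\alpha_1\tau_\sigma/\alpha_1-Z$ up to convention. It is pseudo-local, with off-diagonal kernel again exponentially decaying in stretched distance. The upper Robin solution operator is treated the same way.

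The key steps, in order, are as follows.
\begin{enumerate}
\item Localize: $f_\sigma=\chi_{\rm im}f_\sigma$, so the source sits in a fixed compact set at tangential distance at least $d_b$ from $\mathcal C_b^\pm$.
\item Bound the kernels of $U^-_{\sigma,t}$ and of $R^+_{\sigma,t}T^-_{\sigma,t}$, evaluated on $\mathcal C_b^\pm$ against sources in $\operatorname{supp}\chi_{\rm im}$, in $C^1$ by $\mathcal P(\bar\sigma)e^{-c_b\bar\sigma}$.
\item Pair these kernels with $f_\sigma\in H^{1/2}$, using duality against a smooth kernel. This yields $H^1(\mathcal C_b^\pm)$ bounds of the form $\mathcal P(\bar\sigma)e^{-c_b\bar\sigma}\|f_\sigma\|_{Y_R}$.
\item Use \eqref{eq:QBim-f-size} to replace $\|f_\sigma\|_{Y_R}$ by $C\bar\sigma^{-1/2}\|q_\sigma\|_{Y_R}$.
\end{enumerate}

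The main obstacle is step 2 for the composite Robin map. Its kernel is not a single Hankel function, because the factor $(\tau_\sigma-Z)/(-\tau_\sigma-Z)$ can have a surface-wave pole. I would first try contour deformation in $\xi$: shift the inverse cosine transform into the half-plane where $e^{\ii\xi(s-s')}$ decays. Since the symbol depends on $\xi$ only through $\xi^2-k^2\alpha_1^2$, the effective wavenumber $k\alpha_1$ has imaginary part $\eqsim\bar\sigma$. The pole at $\tau_\sigma=-Z$, i.e.\ $\xi^2=k^2\alpha_1^2+Z^2$, likewise has $\Im\xi\gtrsim\bar\sigma$, so its residue is exponentially small over distance $d_b$.

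If this deformation is awkward, the fallback is a PDE argument. Cut $U^\pm_{\sigma,t}$ off near the source and apply a Caccioppoli/energy estimate in the undone stretched coordinates $\tilde s=\alpha_1 s$. There the equation is the physical Helmholtz equation with impedance on $n=0$, and the known decay of the impedance half-plane Green function from \cite{JiangLi2020} supplies the exponential factor.
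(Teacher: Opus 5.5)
Your overall architecture matches the paper's proof. You represent the lower field through the four-image stretched Hankel kernel for the quarter-plane Dirichlet/Neumann problem and the upper field through an evenly reflected Robin half-plane Green function. You extract $e^{-c_b\bar\sigma}$ from stretched tangential separation, and you treat the Robin datum $T^-_{\sigma,t}f_\sigma$, which is not compactly supported, separately. The paper makes your ``pseudo-locality'' remark concrete by splitting $(0,\infty)=I_\chi\cup I_x\cup I_0$ and integrating by parts against the hypersingular part. You pair with $f_\sigma$ and then invoke \eqref{eq:QBim-f-size}, whereas the paper composes with $B^{\rm im}_\sigma$ and integrates against $q_\sigma$; that difference is inessential. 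The surface-wave pole you single out as the main obstacle is not one. For real $\xi$, on the branch $\operatorname{Re}\tau_\sigma\ge0$ of \eqref{eq:tau-symbol-def}, one has $|\tau_\sigma-Z|^2-|\tau_\sigma+Z|^2=-4Z\operatorname{Re}\tau_\sigma\le0$, so $-\tau_\sigma-Z$ never vanishes.

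The genuine problem is in step 2, and it comes from a wrong kernel. $\mathcal L_{\sigma,t}$ stretches only $s$: in the variables $(\alpha_1 s,n)$ it equals $\alpha_1(\Delta+k^2)$. So the separation-of-variables exponent is $\tau_\sigma(\xi)n/\alpha_1$, not $\tau_\sigma(\xi)n$. The kernels are Hankel functions of $k\sqrt{\alpha_1^2(s\mp s')^2+n^2}$, which is the paper's $X_t(x)=(\alpha_1 s,n)$, not of $k\alpha_1\sqrt{(s\mp s')^2+n^2}$. Your formula is the kernel of the isotropically lossy operator $\Delta+k^2\alpha_1^2$, and it contradicts your own correct remark that the decay does not come from the normal direction.

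With the correct argument, put $\rho=|s\mp s'|\ge d_b$ and $|n|=\lambda\bar\sigma$. Then $\operatorname{Im}\sqrt{\alpha_1^2\rho^2+n^2}\gtrsim\bar\sigma$ only when $\lambda<\rho/d_1$. For $\lambda>\rho/d_1$ it is $O(1)$, roughly $(\rho^2/d_1)(\lambda^2-\rho^2/d_1^2)^{-1/2}$, so the flat kernel there is only algebraically small. The set $\mathcal C_b^\pm=\Omega_R^\pm\cap\operatorname{supp}\eta'$ is a full-height strip reaching $|n|=L_2+d_2$. Hence your uniform bound $\mathcal P(\bar\sigma)e^{-ck\bar\sigma d_b/d_1}$ on $\mathcal C_b^\pm$ does not follow from \eqref{eq:Greendecay} under the paper's choice $d_2=\bar\sigma$. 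The same anisotropy reappears through the factor $e^{-\tau_\sigma n/\alpha_1}$ in your contour-deformation route.

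To close the step, you need $L_2+d_2\le\epsilon\bar\sigma$ with $\epsilon<d_b/d_1$, which still keeps $e^{-Z(L_2+d_2)}$ exponentially small. Otherwise you need an argument that accounts for the vertical layer, which the flat model ignores. The paper's one-line Hankel bound for $D^-_{\sigma,\alpha}$ tacitly requires the same restriction. So once you fix the kernel, state this constraint explicitly rather than import the uniform claim.
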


\begin{proof}
Since
\[
  f_\sigma(s)=\int_{\operatorname{supp}\chi_{\rm im}}
  B_\sigma^{\rm im}(s,s')q_\sigma(s')\,ds',\qquad
  B_\sigma^{\rm im}(s,s')=(-Z)\frac{\ii}{4}\chi_{\rm im}(s)\chi_{\rm im}(s')
  H_0^{(1)}(k\alpha_1(s+s')),
\]
it is enough to estimate the response generated by one boundary profile
\(b_{\sigma,s'}(s):=B_\sigma^{\rm im}(s,s')\), and then integrate the resulting kernel with \(q_\sigma(s')\).

We first derive the two Green kernels $D_{\sigma,\alpha}^\pm (x,s_0)$ excited by
a boundary source $\delta(s_0)$ at the point $(s_0,0)$.  For local points \(x=(s,n)\),
put \(X_t(x)=(\alpha_1s,n)\) and
\(\Phi_{\sigma,t}(x,y)=\frac{\ii}{4}H_0^{(1)}(k|X_t(x)-X_t(y)|)\).  The Neumann
boundary \(s=0\) is imposed by even reflection.  Thus, with \(y^v=(-s_y,n_y)\),
\(y^h=(s_y,-n_y)\), and \(y^{vh}=(-s_y,-n_y)\), the lower Dirichlet Green function for Problem \eqref{eq:lower-flat-model} is obtained from
\[
  G_{\sigma,DN}^-(x,y)=\Phi_{\sigma,t}(x,y)+\Phi_{\sigma,t}(x,y^v)-\Phi_{\sigma,t}(x,y^h)-\Phi_{\sigma,t}(x,y^{vh}),
\]
namely
\[
  D_{\sigma,\alpha}^-(x,s_0):=-\partial_x^\alpha\partial_{\nu_{\sigma,y},-}G_{\sigma,DN}^-(x,(s_0,0)),\qquad |\alpha|\le1.
\]
Thus the lower response generated by \(b_{\sigma,s'}\) has kernel
\[
  K_{\sigma,\alpha}^{-,b}(x,s')=\int_0^\infty D_{\sigma,\alpha}^-(x,s_0)B_\sigma^{\rm im}(s_0,s')\,ds_0,
  \qquad x\in\mathcal C_b^- .
\]
The corresponding Robin datum for the upper problem is
\[
  r_{\sigma,s'}^-(s_1):=M_\beta B_\sigma^{\rm im}(s_1,s')
  -\partial_{\nu_{\sigma,x},-}\int_0^\infty
  \partial_{\nu_{\sigma,y},-}G_{\sigma,DN}^-((s_1,0),(s_0,0))B_\sigma^{\rm im}(s_0,s')\,ds_0,
\]

For the upper Robin problem \eqref{eq:upper-flat-model}, let \(G_{\sigma,R}^{\rm hp}\) be the complex-stretched flat Robin half-plane Green function of \cite[Secs.~2 \& 4]{JiangLi2020}.  The Neumann-boundary Robin Green function is
\[
  G_{\sigma,RN}^+(x,s_1):=-\lim_{\varepsilon\downarrow0}
  \big(G_{\sigma,R}^{\rm hp}(x,(s_1,\varepsilon))+G_{\sigma,R}^{\rm hp}(x,(-s_1,\varepsilon))\big),\qquad s_1>0.
\]
Thus, $D_{\sigma,\alpha}^+(x,s_0) = \partial_x^\alpha G_{\sigma,RN}^+(x,s_0)$ and the upper response has the kernel
\[
  K_{\sigma,\alpha}^{+,b}(x,s')=\int_{0}^{\infty} D_{\sigma,\alpha}^+(x,s_0) r_{\sigma,s'}^-(s_0)ds_0,
  \qquad x\in\mathcal C_b^+ .
\]

We now estimate the two response kernels.  All constants in the following estimates are uniform for the observation point \(x\) in the corresponding set \(\mathcal C_b^-\) or \(\mathcal C_b^+\). Since \(\mathcal C_b^-\) is separated from \(\operatorname{supp}\chi_{\rm im}\), the differentiated Hankel bound \eqref{eq:Greendecay}, applied to the four-image formula for \(G_{\sigma,DN}^-\), gives
\[
  |D_{\sigma,\alpha}^-(x,s_0)|\le \mathcal P(\bar\sigma)e^{-c_b\bar\sigma},
  \qquad x\in\mathcal C_b^-,\quad s_0\in\operatorname{supp}\chi_{\rm im},\quad |\alpha|\le1.
\]
Together with the endpoint Hankel bound for \(B_\sigma^{\rm im}\), this yields
\[
  |K_{\sigma,\alpha}^{-,b}(x,s')|\le \mathcal P(\bar\sigma)e^{-c_b\bar\sigma},
  \qquad x\in\mathcal C_b^-,\quad s'\in\operatorname{supp}\chi_{\rm im},\quad |\alpha|\le1.
\]
For the upper response, the Robin Green estimate of
\cite[Lemma~4.4]{JiangLi2020} applies to \(G_{\sigma,RN}^+\).  Unlike the lower Dirichlet input, the
Robin datum \(r_{\sigma,s'}^-\) on the common flat boundary is not compactly
supported; it is exponentially small away from \(\operatorname{supp}\chi_{\rm im}\). For a fixed
\(x=(s_x,n_x)\in\mathcal C_b^+\), split \((0,\infty)=I_\chi\cup I_x\cup I_0\),
where \(I_\chi\) is a small neighbourhood of \(\operatorname{supp}\chi_{\rm
im}\), \(I_x\) is a small neighbourhood of \(s_x\), and the two neighbourhoods
are disjoint.  On \(I_\chi\) the Robin Green trace is  smooth and exponentially small so that the hypersingularity for $r_{\sigma,s'}^-$ can be treated by one integration by parts; on
\(I_0\) the datum \(r_{\sigma,s'}^-\) is exponentially small; on \(I_x\), the
datum \(r_{\sigma,s'}^-\) is smooth and exponentially small because \(I_x\) is
separated from \(\operatorname{supp}\chi_{\rm im}\), and the tangential
derivative is again treated by one integration by parts.  Hence
\[
  |K_{\sigma,\alpha}^{+,b}(x,s')|\le \mathcal P(\bar\sigma)e^{-c_b\bar\sigma},
  \qquad x\in\mathcal C_b^+,\quad
  s'\in\operatorname{supp}\chi_{\rm im},\quad |\alpha|\le1 .
\]
The polynomial $\mathcal P$ is independent of \(x\in\mathcal
C_b^+\), because the separation distance used in \(I_\chi\), the exponential
smallness on \(I_0\cup I_x\), and the local integration-by-parts estimate on
\(I_x\) are all uniform for \(x\in\mathcal C_b^+\). 
Combining the two signs, we have
\[
  \sum_\pm\sum_{|\alpha|\le1}\sup_{x\in\mathcal C_b^\pm,\,s'\in\operatorname{supp}\chi_{\rm im}}
  |K_{\sigma,\alpha}^{\pm,b}(x,s')|\le \mathcal P(\bar\sigma)e^{-c_b\bar\sigma}.
\]
By superposition,
\[
  \partial_x^\alpha U_{\sigma,t}^\pm(x)=\int_{\operatorname{supp}\chi_{\rm im}}
  K_{\sigma,\alpha}^{\pm,b}(x,s')q_\sigma(s')\,ds' .
\]
Therefore
\[
  \sum_\pm\|U_{\sigma,t}^\pm\|_{H^1(\mathcal C_b^\pm)}
  \le \mathcal P(\bar\sigma)e^{-c_b\bar\sigma}\|q_\sigma\|_{Y_R},
\]
which proves \eqref{eq:QBim-separated-U-small}.
\end{proof}
Combining the above results, we obtain
\begin{lemma}\label{lem:QBim-family}
We have
\begin{equation}\label{eq:QBim-family}
  \left\|\{Q_\sigma\mathcal B_\sigma^{\rm im}h_\sigma\}_{\bar\sigma\ge\bar\sigma_0}\right\|_{\mathfrak Y^{a}}
  \le C\bar\sigma_0^{-1/2}\|\mathbf h\|_{\mathfrak Y^{a}} .
\end{equation}
\end{lemma}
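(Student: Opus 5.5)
To prove the bound on $Q_\sigma\mathcal B_\sigma^{\rm im}$, I fix $\bar\sigma\ge\bar\sigma_0$, take $\mathbf h\in\mathfrak Y^{a}$, and use $q_\sigma=\chi_{\rm im}h_\sigma$ and $f_\sigma=\mathcal B_\sigma^{\rm im}h_\sigma$ as in \eqref{eq:QBim-qf-def}. I need to bound the two parts of the weighted norm \eqref{eq:X-fixed-norm} of $Q_\sigma f_\sigma$: the unweighted part $\|Q_\sigma f_\sigma\|_{Y_R}$ and the weighted localized part $W_\sigma\|\chi_{\rm im}Q_\sigma f_\sigma\|_{Y_R}$. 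In both, the goal is a factor $C\bar\sigma^{-1/2}$ times $\|q_\sigma\|_{Y_R}$ or $W_\sigma\|q_\sigma\|_{Y_R}$. Both of these are at most $\|h_\sigma\|_{Y_\sigma^a}$, so taking the supremum over $\bar\sigma\ge\bar\sigma_0$ then gives \eqref{eq:QBim-family}.

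For the localized weighted part, I split
\[
\chi_{\rm im}Q_\sigma f_\sigma=\chi_{\rm im}Q_{\sigma,t}f_\sigma+\chi_{\rm im}(Q_\sigma-Q_{\sigma,t})f_\sigma .
\]
The flat term is bounded by Lemma~\ref{lem:flat-symbol-ratio}, with the local coordinate $s$ identifying $\Gamma_{\rm im}^\iota$ with a subset of $(0,\infty)$. Together with \eqref{eq:QBim-f-size}, this gives $\|\chi_{\rm im}Q_{\sigma,t}f_\sigma\|_{Y_R}\le C\|f_\sigma\|\le C\bar\sigma^{-1/2}\|q_\sigma\|_{Y_R}$. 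One point needs checking here: $f_\sigma$, extended by zero, lies in $H^{1/2}(0,\infty)$ with comparable norm. This holds because $f_\sigma=\chi_{\rm im}f_\sigma$ and the cutoff is smooth. Multiplying by $W_\sigma$ gives $C\bar\sigma^{-1/2}W_\sigma\|\chi_{\rm im}h_\sigma\|_{Y_R}$. The correction term is handled by Lemmas~\ref{lem:QBim-correction-separated} and \ref{lem:QBim-separated-fields}: it is at most $\mathcal P(\bar\sigma)e^{-c_b\bar\sigma}\|q_\sigma\|_{Y_R}$. After multiplying by $W_\sigma=e^{a\bar\sigma}$ I write $q_\sigma=\chi_{\rm im}h_\sigma$ and absorb the weight, which gives $\mathcal P(\bar\sigma)e^{-c_b\bar\sigma}W_\sigma\|\chi_{\rm im}h_\sigma\|_{Y_R}$. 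Since the weight is applied to $q_\sigma$ itself, no condition relating $a$ and $c_b$ is needed for this term. For $\bar\sigma_0$ large, $\mathcal P(\bar\sigma)e^{-c_b\bar\sigma}\le\bar\sigma^{-1/2}$.

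The unweighted part is the main obstacle. The global bound \eqref{eq:QE-unweighted-poly} only gives $\|Q_\sigma f_\sigma\|_{Y_R}\le\mathcal P(\bar\sigma)\bar\sigma^{-1/2}\|q_\sigma\|_{Y_R}$, and the polynomial factor destroys the decay in $\bar\sigma_0$. I avoid it by using the weight: $\|q_\sigma\|_{Y_R}\le W_\sigma^{-1}\|h_\sigma\|_{Y_\sigma^a}$, so
\[
\|Q_\sigma f_\sigma\|_{Y_R}\le \mathcal P(\bar\sigma)\bar\sigma^{-1/2}e^{-a\bar\sigma}\|h_\sigma\|_{Y_\sigma^a}\le C\bar\sigma^{-1/2}\|h_\sigma\|_{Y_\sigma^a}
\]
for $\bar\sigma_0$ large. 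This is exactly where the weighted space is essential. If one wants to avoid the polynomial entirely, one can alternatively repeat the cutoff argument with $\eta\mathbf U_{\sigma,t}$ globally: $Q_\sigma f_\sigma=\eta Q_{\sigma,t}f_\sigma+\gamma_+W^+_{\sigma,b}$. The first term is $O(\bar\sigma^{-1/2})$ uniformly, and the second is exponentially small by the same two lemmas. However, the weight argument already suffices.

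Summing the two parts and using $\bar\sigma^{-1/2}\le\bar\sigma_0^{-1/2}$ gives $\|Q_\sigma f_\sigma\|_{Y_\sigma^a}\le C\bar\sigma_0^{-1/2}\|h_\sigma\|_{Y_\sigma^a}$ uniformly in $\bar\sigma\ge\bar\sigma_0$, which proves \eqref{eq:QBim-family}.
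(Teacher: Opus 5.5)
Your proposal is correct and follows the paper's proof essentially step by step. For the weighted localized part you split off the flat half-line term, bounded via Lemma~\ref{lem:flat-symbol-ratio} and \eqref{eq:QBim-f-size}, and an exponentially small correction from Lemmas~\ref{lem:QBim-correction-separated} and \ref{lem:QBim-separated-fields}; for the unweighted part you use the polynomial one-sided bound on $Q_\sigma$, with $\mathcal P(\bar\sigma)$ absorbed by $W_\sigma^{-1}$ through $\|q_\sigma\|_{Y_R}\le W_\sigma^{-1}\|\mathbf h\|_{\mathfrak Y^{a}}$, which is exactly what the paper does.
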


\begin{proof}
The polynomial one-sided stability gives
\begin{equation}\label{eq:QBim-poly-Q}
  \|Q_\sigma r\|_{Y_R}
  \le \mathcal P(\bar\sigma)\|r\|_{Y_R},
  \qquad r\in Y_R .
\end{equation}
Substituting \(r=f_\sigma\) and using \eqref{eq:QBim-f-size} yields
\begin{equation}\label{eq:QBim-fixed-unweighted-proof}
  \|Q_\sigma f_\sigma\|_{Y_R}
  \le \mathcal P(\bar\sigma)\bar\sigma^{-1/2}\|q_\sigma\|_{Y_R}.
\end{equation}
By Lemma~\ref{lem:flat-symbol-ratio} and \eqref{eq:QBim-f-size}:
\begin{align}
  W_\sigma\|\chi_{\rm im}Q_{\sigma,t}f_\sigma\|_{Y_R}
  \le C W_\sigma\|f_\sigma\|_{Y_R}
  \le C\bar\sigma^{-1/2}W_\sigma\|q_\sigma\|_{Y_R} .
  \label{eq:QBim-flat-active}
\end{align}
Lemmas~\ref{lem:QBim-correction-separated} and \ref{lem:QBim-separated-fields}
give the estimate
\begin{equation}\label{eq:QBim-return-small}
  W_\sigma\|\chi_{\rm im}(Q_\sigma-Q_{\sigma,t})f_\sigma\|_{Y_R}
  \le \mathcal P(\bar\sigma) e^{-c_b\bar\sigma}
       W_\sigma\|q_\sigma\|_{Y_R} .
\end{equation}
Equations \eqref{eq:QBim-flat-active} and \eqref{eq:QBim-return-small} yield
\begin{equation}\label{eq:QBim-fixed-weighted-proof}
  W_\sigma\|\chi_{\rm im}Q_\sigma f_\sigma\|_{Y_R}
  \le C\left(\bar\sigma^{-1/2}
      +\mathcal P(\bar\sigma) e^{-c_b\bar\sigma}\right)
     W_\sigma\|q_\sigma\|_{Y_R} .
\end{equation}
Finally,
\begin{equation}\label{eq:QBim-X-use}
  \|q_\sigma\|_{Y_R}
  \le W_\sigma^{-1}\|\mathbf h\|_{\mathfrak Y^{a}},
  \qquad
  W_\sigma\|q_\sigma\|_{Y_R}
  \le \|\mathbf h\|_{\mathfrak Y^{a}} .
\end{equation}

Equations \eqref{eq:QBim-fixed-unweighted-proof} and \eqref{eq:QBim-X-use} give
\begin{equation}\label{eq:QBim-unweighted-family}
  \|Q_\sigma f_\sigma\|_{Y_R}
  \le \mathcal P(\bar\sigma)\bar\sigma^{-1/2}W_\sigma^{-1}
       \|\mathbf h\|_{\mathfrak Y^{a}} .
\end{equation}
Equations \eqref{eq:QBim-fixed-weighted-proof} and \eqref{eq:QBim-X-use} give
\begin{equation}\label{eq:QBim-weighted-family}
  W_\sigma\|\chi_{\rm im}Q_\sigma f_\sigma\|_{Y_R}
  \le C\left(\bar\sigma^{-1/2}
      +\mathcal P(\bar\sigma) e^{-c_b\bar\sigma}\right)
      \|\mathbf h\|_{\mathfrak Y^{a}} .
\end{equation}
As
\begin{equation}\label{eq:QBim-final-sup}
  \sup_{\bar\sigma\ge\bar\sigma_0}
  \left(\mathcal P(\bar\sigma)\bar\sigma^{-1/2}W_\sigma^{-1}
  +\bar\sigma^{-1/2}
  +\mathcal P(\bar\sigma) e^{-c_b\bar\sigma}\right)
  \le C\bar\sigma_0^{-1/2} ,
\end{equation}
taking the supremum over \(\bar\sigma\ge\bar\sigma_0\) in the two components of the \(\mathfrak Y^{a}\)-norm proves \eqref{eq:QBim-family}.
\end{proof}
Combining Lemmas~\ref{lem:Bim-family} and \ref{lem:QBim-family}, we justify  
\begin{corollary}\label{cor:Ainv-Bim-small}
We have
\begin{equation}\label{eq:Ainv-Bim-small}
  \left\|(\mathcal A_{\sigma,N}^0)^{-1}\mathcal B_{\sigma}^{\rm im}\right\|_{\mathfrak Y^{a}\to\mathfrak Y^{a}}
  \le C\bar\sigma_0^{-1/2} .
\end{equation}
\end{corollary}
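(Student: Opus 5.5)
The plan is to reduce the corollary to the two preceding family estimates through the exact inverse identity of Lemma~\ref{lem:exact-inverse-identity}. For each fixed \(\bar\sigma\ge\bar\sigma_0\) and each \(h_\sigma\in Y_R\), set \(f_\sigma:=\mathcal B_\sigma^{\rm im}h_\sigma\in Y_R\). By Corollary~\ref{cor:exact-bie-stability} and Theorem~\ref{thm:exact-conv}, \(\mathcal A_{\sigma,N}^0\) is injective on \(Y_R\). The equation \(\mathcal A_{\sigma,N}^0\phi=f_\sigma\) has a unique solution, since the exact PML-Green BIE with a general right-hand side in \(Y_R\) is equivalent (via the factorization \eqref{eq:factor-A-N} and the Robin problem with datum \(r\)) to a uniquely solvable forced PML problem; alternatively, I would simply use that \(\mathcal A_{\sigma,N}^0\) is Fredholm of index zero, so injectivity suffices. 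Lemma~\ref{lem:exact-inverse-identity} then gives
\begin{equation*}
  (\mathcal A_{\sigma,N}^0)^{-1}\mathcal B_\sigma^{\rm im}h_\sigma
  =\mathcal B_\sigma^{\rm im}h_\sigma-Q_\sigma\mathcal B_\sigma^{\rm im}h_\sigma .
\end{equation*}

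Next I apply the triangle inequality in the fixed-\(\bar\sigma\) norm \(\|\cdot\|_{Y_\sigma^a}\):
\begin{equation*}
  \|(\mathcal A_{\sigma,N}^0)^{-1}\mathcal B_\sigma^{\rm im}h_\sigma\|_{Y_\sigma^a}
  \le \|\mathcal B_\sigma^{\rm im}h_\sigma\|_{Y_\sigma^a}
  +\|Q_\sigma\mathcal B_\sigma^{\rm im}h_\sigma\|_{Y_\sigma^a}.
\end{equation*}
Taking the supremum over \(\bar\sigma\ge\bar\sigma_0\) turns these into \(\mathfrak Y^a\)-norms of the families \(\{\mathcal B_\sigma^{\rm im}h_\sigma\}\) and \(\{Q_\sigma\mathcal B_\sigma^{\rm im}h_\sigma\}\). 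These are bounded by \(C\bar\sigma_0^{-1/2}\|\mathbf h\|_{\mathfrak Y^a}\) by \eqref{eq:Bim-family-bound} of Lemma~\ref{lem:Bim-family} and \eqref{eq:QBim-family} of Lemma~\ref{lem:QBim-family}, respectively. Adding the two constants gives \eqref{eq:Ainv-Bim-small}.

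The only point needing care is that both lemmas are stated for the same weight exponent \(a\). Lemma~\ref{lem:Bim-family} requires \(0<a<c_0\). Lemma~\ref{lem:QBim-family} implicitly uses \(a\) small enough that \(\mathcal P(\bar\sigma)\bar\sigma^{-1/2}W_\sigma^{-1}\) stays bounded, which holds for any \(a>0\), and that the term \(\mathcal P(\bar\sigma)e^{-c_b\bar\sigma}\) is independent of \(a\). So I would fix \(a\in(0,\min\{c_0,c_1\})\), compatible also with Lemma~\ref{lem:Ainv-E-small}, and note that the constant \(C\) is independent of \(\bar\sigma_0\).

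I expect no real obstacle here. The only step worth a remark is justifying that \((\mathcal A_{\sigma,N}^0)^{-1}\) is defined on all of \(Y_R\), or at least on the range of \(\mathcal B_\sigma^{\rm im}\), so that Lemma~\ref{lem:exact-inverse-identity} applies to \(f_\sigma\). I would handle this by the Fredholm remark above, or directly by noting that \(I-Q_\sigma\) furnishes a right inverse: the field \(U_\sigma\) construction can be reversed to show \(\mathcal A_{\sigma,N}^0(I-Q_\sigma)f=f\).
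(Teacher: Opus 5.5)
Your proposal is correct and follows the paper's proof: apply the identity $(\mathcal A_{\sigma,N}^0)^{-1}\mathcal B_\sigma^{\rm im}h_\sigma=\mathcal B_\sigma^{\rm im}h_\sigma-Q_\sigma\mathcal B_\sigma^{\rm im}h_\sigma$ from Lemma~\ref{lem:exact-inverse-identity}, then bound the two terms in $\mathfrak Y^{a}$ by Lemmas~\ref{lem:Bim-family} and \ref{lem:QBim-family}. Your extra remark that $I-Q_\sigma$ is a right inverse (by reversing the $U_\sigma$ construction), combined with injectivity from Corollary~\ref{cor:exact-bie-stability}, fills in the existence step that the paper leaves implicit, and you should rely on that argument rather than on the Fredholm-index-zero claim, which the paper does not establish on this open arc.
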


\begin{proof}
For \(\mathbf h\in\mathfrak Y^{a}\), put
\begin{equation}\label{eq:Ainv-Bim-f}
  \mathbf f:=\mathcal B^{\rm im}\mathbf h .
\end{equation}
By Lemma~\ref{lem:exact-inverse-identity}, for each fixed \(\bar\sigma\),
\begin{equation}\label{eq:Ainv-Bim-identity}
  (\mathcal A_{\sigma,N}^0)^{-1}\mathcal B_\sigma^{\rm im}h_\sigma
  =\mathcal B_\sigma^{\rm im}h_\sigma-Q_\sigma\mathcal B_\sigma^{\rm im}h_\sigma .
\end{equation}
Lemma~\ref{lem:Bim-family} estimates the first term and Lemma~\ref{lem:QBim-family} estimates the second one.  Thus
\begin{equation}\label{eq:Ainv-Bim-proof}
  \|(\mathcal A_{\sigma,N}^0)^{-1}\mathcal B^{\rm im}\mathbf h\|_{\mathfrak Y^{a}}
  \le C\bar\sigma_0^{-1/2}\|\mathbf h\|_{\mathfrak Y^{a}} .
\end{equation}
\end{proof}

\subsection{\texorpdfstring{Well-posedness and convergence theory}{Well-posedness and convergence theory}}\label{subsec:neumann-series}
We are ready to establish the well-posedness and convergence theory for the computable PML-BIE \eqref{eq:computable-bie2}. 
\begin{lemma}\label{lem:source-defect}
There exists \(c_2>0\), independent of \(\bar\sigma_0\), such that, for any \(0<a<c_2\),
\begin{equation}\label{eq:source-defect}
  \left\|(\mathcal A_{\sigma,N}^0)^{-1}
  (\mathcal S_\sigma^c-\mathcal S_{\sigma,N}^0)g\right\|_{\mathfrak Y^{a}}
  \le \mathcal P(\bar\sigma_0)e^{-(c_2-a)\bar\sigma_0}\|g\|_{Y_R'} .
\end{equation}
\end{lemma}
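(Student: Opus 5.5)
The plan is to reduce the estimate to a pointwise kernel bound and then apply the exact inverse identity of Lemma~\ref{lem:exact-inverse-identity}, exactly as in the proof of Lemma~\ref{lem:Ainv-E-small}.

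\emph{Step 1: a kernel bound.} By \eqref{eq:GN-local-splitting},
\[
  (\mathcal S_\sigma^c-\mathcal S_{\sigma,N}^0)g=-\int_{\Gamma_R}R_{\sigma,N}(\cdot,y)g(y)\,ds_y .
\]
Since $g$ is compactly supported in $\Sigma_p$, the source variable $y$ ranges over a fixed set $\Gamma_R^{\rm int}$ with $\overline{\Gamma_R^{\rm int}}\Subset\Gamma_R$. This is exactly the setting of \eqref{eq:boundary-source-interior-kernel-small}. That estimate, taken with $m$ large enough (say $m=2$), gives $\|R_{\sigma,N}\|_{C^m(\Gamma_R\times\Gamma_R^{\rm int})}\le C_me^{-\alpha_m\bar\sigma}$.

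\emph{Step 2: the defect in $Y_R$.} Pair the kernel with $g$ in the $Y_R'$--$Y_R$ duality in the $y$ variable. The target variable lies on the open arc $\Gamma_R$, which is piecewise $C^1$ and flat near its ends. Using the standard local-coordinate Sobolev mapping bound (an $H^1(\Gamma_R)$ bound in $x$ suffices), we get
\[
  \|r_\sigma\|_{Y_R}\le \mathcal P(\bar\sigma)e^{-c_2\bar\sigma}\|g\|_{Y_R'},
  \qquad r_\sigma:=(\mathcal S_\sigma^c-\mathcal S_{\sigma,N}^0)g,
\]
with $c_2:=\alpha_m$. A single constant $C(\beta)$ arises from testing, and it is absorbed into $\mathcal P$. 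The same bound holds for $\chi_{\rm im}r_\sigma$, because multiplication by the smooth cutoff is bounded on $Y_R$.

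\emph{Step 3: applying the inverse.} By Lemma~\ref{lem:exact-inverse-identity}, $(\mathcal A_{\sigma,N}^0)^{-1}r_\sigma=r_\sigma-Q_\sigma r_\sigma$. The polynomial one-sided stability \eqref{eq:QBim-poly-Q} gives $\|Q_\sigma r_\sigma\|_{Y_R}\le\mathcal P(\bar\sigma)\|r_\sigma\|_{Y_R}$; it comes from Lemma~\ref{lem:aux-mixed-unique} for $T_\sigma^-$ and from \eqref{eq:forced-robin-pml-est} for $R_\sigma^+$. Together with Step 2 this yields
\[
  \|(\mathcal A_{\sigma,N}^0)^{-1}r_\sigma\|_{Y_R}
  +W_\sigma\|\chi_{\rm im}(\mathcal A_{\sigma,N}^0)^{-1}r_\sigma\|_{Y_R}
  \le \mathcal P(\bar\sigma)(1+e^{a\bar\sigma})e^{-c_2\bar\sigma}\|g\|_{Y_R'} .
\]
For $0<a<c_2$, the function $\mathcal P(\bar\sigma)e^{-(c_2-a)\bar\sigma}$ is eventually decreasing, after slightly lowering $c_2$ if necessary to absorb the polynomial. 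Taking the supremum over $\bar\sigma\ge\bar\sigma_0$ then gives \eqref{eq:source-defect}.

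\emph{Main obstacle.} The delicate point is Step 2. Proposition~\ref{prop:reflected-kernel-estimate} is stated with $C^m$ control in the target variable up to the endpoints of $\Gamma_R$. We must check that this control really yields a $Y_R=H^{1/2}(\Gamma_R)$ bound on the open arc, including near the corners where $\Gamma_R$ meets $\Sigma_R$. Near the endpoints, some reflected images of $y$ approach $x$, but only when $y$ is near the boundary. Because $y\in\operatorname{supp}g\subset\Sigma_p$ is a fixed positive distance from $\Sigma_R$, every image path crosses a PML layer of width at least $d_1$ or $d_2$. This gives a uniform separation $|z|\ge\rho_0$ in \eqref{eq:Greendecay}, so the bound extends to the whole target arc. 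I would verify this separation explicitly from \eqref{eq:reflected-points-corrected} before invoking the proposition.
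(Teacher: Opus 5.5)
Your proposal is correct and follows essentially the same route as the paper. You bound $r_\sigma=(\mathcal S_\sigma^c-\mathcal S_{\sigma,N}^0)g$, and hence $W_\sigma\chi_{\rm im}r_\sigma$, exponentially via the interior-source kernel estimate \eqref{eq:boundary-source-interior-kernel-small}; then you use $(\mathcal A_{\sigma,N}^0)^{-1}r_\sigma=r_\sigma-Q_\sigma r_\sigma$ together with the polynomial stability of $Q_\sigma$ and take the supremum over $\bar\sigma\ge\bar\sigma_0$. Your extra check that images of sources in $\operatorname{supp}g$ stay separated from the whole target arc, endpoints included, is exactly what that interior-source estimate encodes, and the paper relies on it implicitly.
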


\begin{proof}

Set
\[
  r_\sigma:=(\mathcal S_\sigma^c-\mathcal S_{\sigma,N}^0)g .
\]
As the support of \(g\) is away from $\Sigma_{\rm pml}$, the integrand in the
integral of $r_\sigma$ decays exponentially with exponent proportional to
$\bar{\sigma}$ according to Proposition~\ref{prop:reflected-kernel-estimate}.
Therefore, after choosing \(c_2>0\) smaller than the decay rate therein, we have
\begin{equation}\label{eq:source-defect-r}
  \|r_\sigma\|_{Y_R}+W_\sigma\|\chi_{\rm im}r_\sigma\|_{Y_R}
  \le \mathcal P(\bar\sigma)e^{-(c_2-a)\bar\sigma}\|g\|_{Y_R'} .
\end{equation}
Here the weight is harmless because \(W_\sigma=e^{a\bar\sigma}\) and we can take \(a<c_2\). On the other hand, the polynomial stability of $Q_\sigma$ directly implies
\begin{equation}\label{eq:source-defect-Qr}
  \|Q_\sigma r_\sigma\|_{Y_R}
  +W_\sigma\|\chi_{\rm im}Q_\sigma r_\sigma\|_{Y_R}
  \le \mathcal P(\bar\sigma)e^{-(c_2-a)\bar\sigma}\|g\|_{Y_R'} .
\end{equation}
Finally, \((\mathcal A_{\sigma,N}^0)^{-1}r_\sigma=r_\sigma-Q_\sigma r_\sigma\) by Lemma~\ref{lem:exact-inverse-identity}.  Combining the last two estimates and taking the supremum over \(\bar\sigma\ge\bar\sigma_0\) in the \(\mathfrak Y^{a}\)-norm proves \eqref{eq:source-defect}.
\end{proof}
Our main result is stated below.
\begin{theorem}\label{thm:computable-convergence}
Let \(\phi_\sigma^0\) solve the exact PML-Green BIE~\eqref{eq:exact-bie}.  Choose
$0<a<\min\{c_0,c_1,c_2\}$, where 
\(c_0=\alpha\) is the rate from Corollary~\ref{cor:exact-physical-family-bound}
with $\chi_{\rm tail}=\chi_{\rm im}$, \(c_1\) is the separated-remainder rate
from Lemma~\ref{lem:Ainv-E-small}, and \(c_2\) is the source-defect rate from
Lemma~\ref{lem:source-defect}.
Then the computable PML-BIE \eqref{eq:computable-bie2} is uniquely solvable in the weighted family class \(\mathfrak Y^{a}\); in particular, for each \(\bar\sigma\ge \bar\sigma_0\), it gives a density \(\phi_\sigma^c\in Y_R\).  Moreover, there exist constants \(c>0\), independent of \(\bar\sigma_0\), such that
\begin{equation}\label{eq:density-convergence}
  \|\boldsymbol\phi^c-\boldsymbol\phi^0\|_{\mathfrak Y^{a}}
  \le \mathcal P(\bar\sigma_0)e^{-c\bar\sigma_0}\|g\|_{Y_R'} .
\end{equation}
For any compact set \(K\Subset\Omega_p\),
\begin{equation}\label{eq:field-convergence}
  \|u-u_\sigma^c\|_{H^1(K)}
  \le \mathcal P(\bar\sigma_0)\left(e^{-c\bar\sigma_0}
    +e^{-Z(L_2+d_2)}\right)
  \|g\|_{Y_R'} .
\end{equation}
\end{theorem}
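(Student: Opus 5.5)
The plan is a Neumann-series argument on the weighted family space \(\mathfrak Y^{a}\), built on the decomposition \eqref{eq:A0-Atilde-decomposition}. Since \(\mathcal A_\sigma^c=\mathcal A_{\sigma,N}^0-\mathcal B_\sigma^{\rm im}-\mathcal E_\sigma\), the computable equation \eqref{eq:computable-bie2} is equivalent, for each fixed \(\bar\sigma\), to
\[
  \bigl(I-\mathcal T_\sigma\bigr)\phi_\sigma^c=(\mathcal A_{\sigma,N}^0)^{-1}\mathcal S_\sigma^c g,
  \qquad
  \mathcal T_\sigma:=(\mathcal A_{\sigma,N}^0)^{-1}\bigl(\mathcal B_\sigma^{\rm im}+\mathcal E_\sigma\bigr).
\]
Here \((\mathcal A_{\sigma,N}^0)^{-1}\) exists by Theorem~\ref{thm:exact-conv} and Corollary~\ref{cor:exact-bie-stability}. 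Corollary~\ref{cor:Ainv-Bim-small} and Lemma~\ref{lem:Ainv-E-small} give
\[
  \|\mathcal T\|_{\mathfrak Y^{a}\to\mathfrak Y^{a}}\le C\bar\sigma_0^{-1/2}+\mathcal P(\bar\sigma_0)e^{-(c_1-a)\bar\sigma_0}\le \tfrac12
\]
for \(\bar\sigma_0\) large, since \(a<c_1\). So \(I-\mathcal T\) is invertible on \(\mathfrak Y^{a}\). Because the operators act componentwise in \(\bar\sigma\), this yields a unique family solution \(\boldsymbol\phi^c\), and hence a density \(\phi_\sigma^c\in Y_R\) for each \(\bar\sigma\ge\bar\sigma_0\). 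Uniqueness in the class follows from the same contraction.

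For the density error, set \(e_\sigma=\phi_\sigma^c-\phi_\sigma^0\) and subtract \(\mathcal A_{\sigma,N}^0\phi_\sigma^0=\mathcal S_{\sigma,N}^0g\). This gives
\[
  e_\sigma=\mathcal T_\sigma e_\sigma+(\mathcal A_{\sigma,N}^0)^{-1}(\mathcal S_\sigma^c-\mathcal S_{\sigma,N}^0)g+\mathcal T_\sigma\phi_\sigma^0 .
\]
The source term is controlled by Lemma~\ref{lem:source-defect}. The term \((\mathcal A_{\sigma,N}^0)^{-1}\mathcal E_\sigma\phi_\sigma^0\) is exponentially small by the same estimates used in Lemma~\ref{lem:Ainv-E-small}, provided \(\boldsymbol\phi^0\in\mathfrak Y^{a}\) with norm \(\le\mathcal P\|g\|\). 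That membership holds by Corollary~\ref{cor:exact-physical-family-bound}: the tail bound \eqref{eq:exact-physical-family-tail-small} with \(\chi_{\rm tail}=\chi_{\rm im}\) together with \(a<c_0\) gives
\[
  W_\sigma\|\chi_{\rm im}\phi_\sigma^0\|_{Y_R}\le\mathcal P e^{-(c_0-a)\bar\sigma}\|g\|_{Y_R'} .
\]

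\emph{The main obstacle} is the term \((\mathcal A_{\sigma,N}^0)^{-1}\mathcal B_\sigma^{\rm im}\phi_\sigma^0\), because Corollary~\ref{cor:Ainv-Bim-small} supplies only an algebraic factor \(\bar\sigma_0^{-1/2}\). The fix is to exploit the localization \eqref{eq:Bim-localized}: \(\mathcal B_\sigma^{\rm im}\phi_\sigma^0=\mathcal B_\sigma^{\rm im}\chi_{\rm im}\phi_\sigma^0\), so I would rerun the proofs of Lemmas~\ref{lem:Bim-family} and \ref{lem:QBim-family} with \(q_\sigma=\chi_{\rm im}\phi_\sigma^0\). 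The estimates \eqref{eq:QBim-fixed-unweighted-proof} and \eqref{eq:QBim-fixed-weighted-proof} are bounded by \(\mathcal P(\bar\sigma)(1+W_\sigma)\|q_\sigma\|_{Y_R}\). That is at most \(\mathcal P e^{-(c_0-a)\bar\sigma}\|g\|\) if the weighted estimate is used with a smaller weight exponent \(a'<a\) (or by using \(\|q_\sigma\|\le\mathcal P e^{-c_0\bar\sigma}\|g\|\) directly). This gives a forcing of size \(\mathcal P(\bar\sigma_0)e^{-c\bar\sigma_0}\|g\|\) with \(c=\min\{c_0,c_1,c_2\}-a>0\). Absorbing \(\mathcal T e\) via the contraction then proves \eqref{eq:density-convergence}.

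For the field, compare \eqref{eq:appGreen-identity}, i.e. \(u_\sigma^c=\mathrm{SL}_\sigma^c(g-M_\beta\phi_\sigma^c)-\mathrm{DL}_\sigma^c\phi_\sigma^c\), with \eqref{eq:exact-reconstruction} on \(K\Subset\Omega_p\). The difference has three pieces:
\[
  \bigl[\mathrm{SL}_\sigma^c(g-M_\beta\,\cdot\,)-\mathrm{DL}_\sigma^c\,\cdot\,\bigr]e_\sigma,
  \qquad
  \mathrm{SL}_{\sigma,N}^0-\mathrm{SL}_\sigma^c\ \text{terms, and}\qquad
  \mathrm{DL}_{\sigma,N}^0-\mathrm{DL}_\sigma^c\ \text{terms, both applied to }\phi_\sigma^0 .
\]
The first piece is bounded by \(C\|e_\sigma\|_{Y_R}\), since the free-space stretched kernel is smooth for \(x\in K\) away from \(\Gamma_R\) near \(K\), with interior estimates. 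The remaining kernel differences are \(R_{\sigma,N}\), which is \(C^m\)-small on \(K\times\Gamma_R\) by \eqref{eq:obs-kernel-small}, times \(\mathcal P\|g\|\) from \eqref{eq:exact-physical-unweighted}. Finally, combining with Theorem~\ref{thm:exact-conv} by the triangle inequality gives \eqref{eq:field-convergence}.
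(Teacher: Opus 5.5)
Your proposal follows the paper's proof essentially step by step: the same Neumann series on $\mathfrak Y^{a}$ for the error $e_\sigma=\phi_\sigma^c-\phi_\sigma^0$ with perturbation $(\mathcal A_{\sigma,N}^0)^{-1}(\mathcal B_\sigma^{\rm im}+\mathcal E_\sigma)$. You use the same three forcing estimates (Lemma~\ref{lem:source-defect}, the fixed-$\bar\sigma$ version of Lemma~\ref{lem:Ainv-E-small}, and the localization $\mathcal B_\sigma^{\rm im}\phi_\sigma^0=\mathcal B_\sigma^{\rm im}\chi_{\rm im}\phi_\sigma^0$ combined with \eqref{eq:exact-physical-family-tail-small}), and you spell out the field comparison through Proposition~\ref{prop:reflected-kernel-estimate}, which the paper leaves implicit.

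One small caution: get existence from the error equation (as the paper does) or from the contraction at each fixed $\bar\sigma$. Do not get it by inverting $I-\mathcal T$ on $\mathfrak Y^{a}$ with right-hand side $(\mathcal A_{\sigma,N}^0)^{-1}\mathcal S_\sigma^c g$: Corollary~\ref{cor:exact-physical-family-bound} only gives $\|\phi_\sigma^0\|_{Y_R}\le\mathcal P(\bar\sigma)\|g\|_{Y_R'}$ with a $\bar\sigma$-dependent factor, so $\sup_{\bar\sigma\ge\bar\sigma_0}\|\phi_\sigma^0\|_{Y_\sigma^{a}}<\infty$ is not available.
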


\begin{proof}
Set
\begin{equation}\label{eq:P-perturb-def}
  \mathcal P_\sigma:=\mathcal B_\sigma^{\rm im}+\mathcal E_\sigma
  =\mathcal A_{\sigma,N}^0-\mathcal A_\sigma^c .
\end{equation}
We search the computable density in the form \(\phi_\sigma^c=\phi_\sigma^0+e_\sigma\).  As \(\mathcal A_{\sigma,N}^0\phi_\sigma^0=\mathcal S_{\sigma,N}^0g\), equation \eqref{eq:computable-bie2} is equivalent to
\begin{equation}\label{eq:error-fixed-point}
  e_\sigma
  =(\mathcal A_{\sigma,N}^0)^{-1}\mathcal P_\sigma e_\sigma
   +(\mathcal A_{\sigma,N}^0)^{-1}
     \left\{(\mathcal S_\sigma^c-\mathcal S_{\sigma,N}^0)g
     +\mathcal P_\sigma\phi_\sigma^0\right\} .
\end{equation}
By Corollary~\ref{cor:Ainv-Bim-small} and Lemma~\ref{lem:Ainv-E-small},
\begin{equation}\label{eq:P-small}
  \left\|(\mathcal A_{\sigma,N}^0)^{-1}\mathcal P_\sigma\right\|_{\mathfrak Y^{a}\to\mathfrak Y^{a}}
  \le C\bar\sigma_0^{-1/2}+C\mathcal P(\bar\sigma_0)e^{-(c_1-a)\bar\sigma_0}<\frac12
\end{equation}
for \(\bar\sigma_0\) large.  Hence \(I-(\mathcal A_{\sigma,N}^0)^{-1}\mathcal P_\sigma\) is invertible on \(\mathfrak Y^{a}\) by the Neumann series.  This proves the existence and uniqueness of \(e_\sigma\), and therefore of the solution \(\phi_\sigma^c=\phi_\sigma^0+e_\sigma\), provided that the inhomogeneous term in \eqref{eq:error-fixed-point} belongs to \(\mathfrak Y^{a}\). The two terms in \(\mathcal P_\sigma\phi_\sigma^0\) are estimated directly to justify this fact; note that we cannot use \eqref{eq:P-small} for the estimations.  For the separated remainder $\mathcal E_\sigma$, Lemma~\ref{lem:Ainv-E-small} already gives
\begin{equation}\label{eq:Ainv-E-phi0-small}
  \|(\mathcal A_{\sigma,N}^0)^{-1}\mathcal E_\sigma\phi_\sigma^0\|_{\mathfrak Y^{a}}
  \le \mathcal P(\bar\sigma_0) e^{-(c_1-a)\bar\sigma_0}\|g\|_{Y_R'} .
\end{equation}
For the nearest-image term ${\cal B}_{\sigma}^{\rm im}$, as \(\mathcal B_\sigma^{\rm im}\) only sees \(\chi_{\rm im}\phi_\sigma^0\), \eqref{eq:exact-physical-family-tail-small}, Lemma~\ref{lem:Bim-unweighted}, and Lemma~\ref{lem:QBim-family} give
\begin{equation}\label{eq:Ainv-Bim-phi0-small}
  \|(\mathcal A_{\sigma,N}^0)^{-1}\mathcal B_{\sigma}^{\rm im}\phi_\sigma^0\|_{\mathfrak Y^{a}}
  \le \mathcal P(\bar\sigma_0)e^{-(c_0-a)\bar\sigma_0}\|g\|_{Y_R'} .
\end{equation}
Therefore, by applying Lemma~\ref{lem:source-defect}, the inhomogeneous term in \eqref{eq:error-fixed-point} is bounded by
  $C\mathcal P(\bar\sigma_0)e^{-c\bar\sigma_0}\|g\|_{Y_R'}$ by choosing $0<c<\min_j{c_j-a}$.
The compact-field estimate \eqref{eq:field-convergence} follows directly from the exact PML-PDE convergence estimate \eqref{eq:pde-conv}.
\end{proof}

\section{\texorpdfstring{The complex-scaled BIE}{The complex-scaled BIE}}\label{sec:cs-bie}

Theorem~\ref{thm:computable-convergence} proves the convergence of the computable PML-BIE \eqref{eq:computable-bie2}.  This finite equation can be viewed as a hard truncation of an untruncated complex-scaled BIE on the whole impedance boundary \(\Sigma\).

Let \(u\) be the outgoing solution of \eqref{eq:phys-pde}--\eqref{eq:phys-src}.  In the PML region define the formal complex-scaled extension
  $u_\sigma^\infty(x):=u(F_\sigma(x))$,
by the outgoing analytic continuation \cite[Sec. 4]{JiangLi2020}. In the physical region $\Omega_p$, where \(F_\sigma(x)=x\),
  $u_\sigma^\infty(x)=u(x)$.
Let \(\mathcal S_\sigma^\infty\) and \(\mathcal K_\sigma^\infty\) be the single- and double-layer boundary integral operators defined the same as ${\cal S}_\sigma^c$ and ${\cal K}_\sigma^c$ but with $\Gamma_R$ replaced by the whole impedance boundary \(\Sigma\).
Set
\begin{equation}\label{eq:Ainfty}
  \mathcal A_\sigma^\infty:=\frac12I+\mathcal K_\sigma^\infty+\mathcal S_\sigma^\infty M_\beta .
\end{equation}
The corresponding untruncated complex-scaled BIE is
\begin{equation}\label{eq:complex-scaled-bie}
  \mathcal A_\sigma^\infty\phi_\sigma^\infty=\mathcal S_\sigma^\infty g
  \qquad \text{on }\Sigma .
\end{equation}
It is clear that $\phi_\sigma^\infty =  u_\sigma^\infty|_{\Sigma}$ is a solution so that its physical part is the same as $u$. 

Indeed, the computable PML-BIE \eqref{eq:computable-bie2} is the hard truncation of \eqref{eq:complex-scaled-bie}: the infinite boundary \(\Sigma\) is replaced by the retained arc \(\Gamma_R\), and the whole-boundary operators \(\mathcal S_\sigma^\infty,\mathcal K_\sigma^\infty\) are replaced by their finite-arc counterparts \(\mathcal S_\sigma^c,\mathcal K_\sigma^c\).  Thus Theorem~\ref{thm:computable-convergence} directly gives the exponential convergence of $\phi_\sigma^c$ to the physical solution $u$, or the complex-scaled solution $\phi_\sigma^\infty$ in the physical region of $\Gamma_R$. The present analysis does not require a well-posedness theory for \eqref{eq:complex-scaled-bie}, and a practical numerical method indeed does not solve \eqref{eq:complex-scaled-bie}; this paper avoids the challenging analysis of integral operators on the unbounded curve \(\Sigma\).  For the convergence of the practical finite PML-BIE, such an untruncated problem is not needed: it serves only to interpret the stretched free-space kernel used in the computable equation.

\section{Conclusion}\label{sec:conclusion}

We have established a convergence theory for the PML-BIE method for acoustic scattering by an impedance half-space.  The framework proceeds in four steps.  First, one proves the well-posedness and exponential convergence of the underlying PML truncated PDE.  Second, one constructs an exact Neumann PML Green function and the associated exact PML-Green BIE; this BIE must be equivalent to the original PML PDE, so that the PDE stability can be transferred to the boundary integral formulation.  Third, one compares this exact PML Green function with the stretched free-space Green function.  Apart from the stretched free-space term, the difference consists of the nearest Neumann reflected image and well-separated reflected terms: the well-separated terms are exponentially small, while the nearest-image contribution is shown to be algebraically small in a weighted space.  Finally, a Neumann-series argument transfers the stability and convergence from the exact PML-Green BIE to the computable stretched-kernel PML-BIE.  We believe that this provides a general framework that can be adapted to analyze more complicated scattering structures.

Several extensions are natural.  For scalar transmission and layered-medium scattering problems, the exact-kernel strategy should lead to Calder\'on or M\"uller-type systems for Cauchy data on the interface; the related PML-BIE methodology has already appeared in acoustic and electromagnetic layered media \cite{LuLuQian2018,LuXuYinZhang2023,BaoLuYinZhang2024,WangLu2026}.  For water-wave scattering, complex-scaled boundary integral formulations such as \cite{BonnetBenDhiaFariaPerez2024} provide a natural target for the same framework; we note however that a significant difference is that the stretched Laplacian kernel therein does not decay exponentially.  Maxwell transmission problems and step-like or multi-layered interfaces, including settings related to \cite{YuHuLuRathsfeld2022,Lu2021,LuZhengZhu2025,LuLi2025}, are also natural directions.

\end{document}